  \theoremstyle{definition}
  \newtheorem*{example*}{\protect\examplename}
  \theoremstyle{remark}
  \newtheorem*{rem*}{\protect\remarkname}
  \theoremstyle{definition}
  \newtheorem*{problem*}{\protect\problemname}
  \theoremstyle{remark}
  \newtheorem{rem}{\protect\remarkname}
\theoremstyle{plain}
\newtheorem{thm}{\protect\theoremname}
 \theoremstyle{definition}
 \newtheorem*{defn*}{\protect\definitionname}
  \theoremstyle{definition}
  \theoremstyle{plain}
  \newtheorem{cor}{\protect\corollaryname}
 \theoremstyle{definition}
  \theoremstyle{plain}
  \newtheorem{lem}{\protect\lemmaname}
  \theoremstyle{remark}
  \newtheorem*{summary*}{\protect\summaryname}
\titleformat{\section}{\large\sc\center}{\thesection}{1em}{}
\titleformat{\subsection}{\sc\center}{\thesubsection}{1em}{}
\titleformat{\subsubsection}{\sc}{\thesubsubsection}{1em}{}
  \providecommand{\conditionname}{Condition}
  \providecommand{\definitionname}{Definition}
  \providecommand{\examplename}{Example}
  \providecommand{\lemmaname}{Lemma}
  \providecommand{\problemname}{Problem}
  \providecommand{\remarkname}{Remark}
  \providecommand{\summaryname}{Summary}
\providecommand{\corollaryname}{Corollary}
\providecommand{\theoremname}{Theorem}
  \providecommand{\examplename}{Example}
  \providecommand{\remarkname}{Remark}
\begin{document}

\title{Minimum Integrated Distance Estimation in Simultaneous Equation Models%
\thanks{The authors would like to express their appreciation to Ivan Fernandez-Val, Sergio Firpo,
Keisuke Hirano, Joel Horowitz, Simon Lee, George Neumann, Hyungsik
Roger Moon, Aureo de Paula, Alexandre Poirier, Andres Santos, Alex Torgovitsky and participants in the seminars at CORE, ECARES, and 2014 LAMES for helpful comments and discussions. Part of this research was undertaken while Zhengyuan Gao was affiliated with University of Iowa and Southwestern University of Finance and Economics whose Financial supports are gratefully acknowledged. All the remaining
errors are ours.%
}}

\author{Zhengyuan Gao%
\thanks{Corresponding author, CORE, Universiti\'{e} catholique de Louvain. E-mail: \texttt{zhengyuan.gao@uclouvain.be} %
} %
\\
\and Antonio F. Galvao%
\thanks{Department of Economics, Tippie College of Business, University of
Iowa. E-mail: \texttt{antonio-galvao@uiowa.edu}%
} }

\maketitle
\begin{abstract}
\begin{singlespace}
This paper considers estimation and inference in semiparametric econometric
models. Standard procedures estimate the model based on
an independence restriction that induces a minimum distance between
a joint cumulative distribution function and the product of the marginal
cumulative distribution functions. This paper develops a new estimator
which generalizes estimation by allowing endogeneity of the weighting measure
and estimating the optimal measure nonparametrically. The optimality corresponds to the minimum of the integrated distance. 
To accomplish this aim we use Kantorovich's
formulation of the optimal transportation problem. 
The minimizing distance is equivalent to the total variation distance and thus characterizes
finer topological structures of the distributions. The estimation
also provides greater generality by dealing with probability measures
on compact metric spaces without assuming existence of densities.
Asymptotic statistics of the empirical estimates have standard convergent
results and are available for different statistical analyses. In addition,
we provide a tractable implementation for computing the estimator
in practice.\end{singlespace}

\end{abstract}
\thispagestyle{empty}

\begin{singlespace}

Key Words: Minimum distance, Kantorovich's duality, Kernel representation,
Hilbert space\\

JEL Classification: C12, C13, C14
\end{singlespace}

\newpage

\baselineskip19.15pt \pagenumbering{arabic}

\section{Introduction\label{sec:Introduction}}

Economists often use system of simultaneous equations to describe
the relationship among economic variables. In particular, nonlinear simultaneous equation models have provided a valuable method of statistical analysis of policy variables on economic effects. This is especially true for studies where these methods help to analyze the affects of the outcome distributions of interest. Empirically, given data
on the dependent and independent variables in the system, one is usually
interested in estimating functions, distributions, and primitives
describing the system. 
Identification and estimation of nonlinear structural models is often achieved by assuming that
the model's latent variables are independent of the exogenous variables.
Examples of such arguments include, among others, \citet{Manski1983}, \citet{Brown1983},
\citet{Roehrig1988}, \citet{BrownMatzkin1998}, \citet{Matzkin2003},
\citet{BrownWegkamp2002}, \citet{BenkardBerry2006},
\citet{BrownDebWegkamp2008}, and \citet{LintonSperlichvanKeilegom2008}. 

This paper considers estimation and inference in semiparametric econometric
models. We develop a new minimum distance estimator for separable models based on minimizing the distance from the independence condition, where the weighting measure is allowed to be endogenous and is estimated nonparametrically.
This is an important innovation for several reasons. First, the new estimator allows general estimation without external restrictions on the weighting measure and provides an automatic optimal measure selection. Second, the estimator is more efficient than the others available in the literature. Finally, in this general formulation, the proposed method is beneficial to practitioners since the range of models for which the methods are applicable is very broad, for instance, the framework includes an important class of nonlinear simultaneous equation models without requiring knowledge of the reduced form of the model.

We focus on nonlinear simultaneous equations models
with an exogenous observed random vector $X$, a dependent observed
random vector $Y$, and the model errors which are endogenous and
related to the parameterized model via $\varepsilon=\rho(X,Y,\theta_{0})$.
The underlying $\varepsilon$ is assumed to be drawn from a fixed
but unknown distribution and to be stochastically independent of $X$.
However, given the estimated parameters $\theta^{*}\neq\theta_{0}$,
the model may induce $\varepsilon=\rho(X,Y,\theta^{*})$ that are
not necessary independent of $X$. 
The intuition on the importance and development of the new approach is as follows. Under the independence condition, the criterion function is an integral of a
distance function $\mathbf{d}(\cdot,\cdot)$ given by
\begin{equation}
\int\mathbf{d}(H_{\theta}(x,\varepsilon),P_{\theta}(x,\varepsilon))d\mu(x,\varepsilon),\label{eq:empiricalMMD-1}
\end{equation}
where $\theta$ is the parameter of interest, $(H_{\theta}, P_{\theta})$ are distribution functions
of $(X,\varepsilon)$, $\mu$ is a certain probability measure. 
Let $\varepsilon = \rho (X, Y, \theta)$ and $(X,Y)$ be pairs of observations. Thus, the measure $\mu$ in \eqref{eq:empiricalMMD-1} can be expressed as $\mu(X,\rho(X,Y,\theta))$ which, in general, depends on a function of $\theta$. 
We argue it is important to endogenize the measure $\mu$ because changes in $\theta$ induce changes in the other inputs.
For instance, a change in $\theta$ directly induces a change in $\mu(X,\rho(X,Y,\theta))$ as well as in $P_{\theta}(\varepsilon)$ and $H_{\theta}(\varepsilon)$. A second reason to endogenize $\mu$ is practical. The weighting measure $\mu$ in the criterion function characterizes the universal differences between the distributions $H_{\theta}$ and $P_{\theta}$ via an integral. In principle, this measure could be independent of the choice
of $\theta$. However, in practice these weighted measures are unobservable
as the underlying innovation $\varepsilon=\rho(X,Y,\theta_{0})$ is not available. Moreover,
the empirical measure on the product space of $\varepsilon$ and $x$
is sensitive to the choice of $\theta$ in the estimation procedure, hence, in applications, it
is difficult to specify \textit{a priori} measure to deliver the desired
integrated values. 
Therefore, we view the endogeneity of $\mu$ thorough the interaction amongst the weighting measures, $H_{\theta}(\varepsilon)$ and  $P_{\theta}(\varepsilon)$. 


To achieve the goal of endogenizing the weighting measure and developing the estimator, we proceed in four steps. First, we are required to select the optimal measure. Hence, we represent $\mu(\cdot)$
as a transport measure, $\gamma(\cdot)$, that pushes $H_{\theta}$ towards $P_{\theta}$. Consider a set of joint measures $\Gamma(H_{\theta},P_{\theta})$. Let $\gamma(x,\varepsilon;x',\varepsilon')$ belong to this set and satisfy
\begin{equation*}
P_{\theta}(x,\varepsilon) = \int d\gamma (x,\varepsilon;x',\varepsilon') dH_{\theta}(x',\varepsilon'), \ \text{and,} \
H_{\theta}(x',\varepsilon') = \int d\gamma (x,\varepsilon;x',\varepsilon') dP_{\theta}(x,\varepsilon).
\end{equation*}
The measure  $\gamma(x,\varepsilon;x',\varepsilon')$ replaces the role of $\mu$ in the problem \eqref{eq:empiricalMMD-1} and accounts for interaction with $P_{\theta}(\varepsilon)$ and  $H_{\theta}(\varepsilon)$.
For two pairs of samples $(X,Y)$, one is drawn from $P_{\theta}(x,\varepsilon)$ and the other is drawn from $H_{\theta}(x,\varepsilon)$,  joint measure $\gamma(x,\varepsilon;x',\varepsilon')$  is the measure of a transport map connecting these two samples.  
Second, given the transport measure, we reformulate the main problem using a generalized estimation criterion function, $W(\theta)$, based on an integrated distance w.r.t. the joint probability measure $\gamma$ as
\begin{equation*}
W(\theta):=\inf_{\gamma\in\Gamma(H_{\theta},P_{\theta})}\left\{ \int\mathbf{d}(q,q')d\gamma(q,q'):\: q\sim H_{\theta}(x,\varepsilon)\mbox{ and }q'\sim P_{\theta}(x,\varepsilon)\right\}. \label{eq:Kantonovich}
\end{equation*}
Third, to solve this problem practically, the above weighting measure is dually represented by
a transition kernel that pushes one measure to another under the new
metric. Fourth, from the dual representation, we establish a kernel representation theorem, such that we are able to embed the infinite dimensional criterion function into a tractable space. In doing this, the criterion function is represented in terms of a kernel-based distance.
When the transition kernel induces the zero criterion under
the new metric, the parameters are identified at $\theta_{0}$, and a practical estimator for the parameters of interest is developed.

The contributions of this paper to the literature are as
following. First, we propose a novel weighted minimum distance estimator
from independence condition which employs a nonparametric estimation
of the distance function, and provides an automatic optimal measure selection.
Second, we establish identification of the
model in the dual representation. The dual distance characterizes finer topologies of probability measures.
Third, based on the new representation, we provide
both asymptotically biased (in the sense of root-$n$ inconsistent) and unbiased
statistics for estimation. In addition, we establish consistency and
derive their limiting distributions. Fourth, we develop practical
inference procedures. Fifth, we show that the proposed method is more efficient relative to the existing estimators.
Finally, we provide tractable implementation
for computing the estimator in practice.

From a technical point of view, the construction of the estimation procedure, and derivations of the statistical
and limiting properties of the proposed estimator are of independent
interest. 
The central mathematical tool used to accomplish these
is the use of the optimal transportation theory and its corresponding
dual problem. In particular, we make use of the Kantorovich's formulation
of the optimal transportation problem. The novel integrated distance
function can be interpreted as the optimal cost of transferring one
mass distributed according to joint cumulative distribution function (c.d.f.) of $\varepsilon$ and $x$
to another mass distributed according to the product of $\varepsilon$
and $x$'s marginal c.d.f.s. The use of optimal weight is essential
to the general formulation of integral minimum distance problems.
Therefore, the main technical contribution of this paper is to introduce a new
set of theorems for establishing the asymptotic results (consistency
and weak convergence) for the weighted minimum distance from
independence estimators for general distance functions in both primal
and dual problems. The problem of approximating measures with respect to transportation
distances has connections with the fields of probability theory \citep{Talagrand1991},
information theory \citep{GrafLuschgy2000}, and optimal transport \citep{CedricVillani2009}.
The transportation distance also appears in economics and econometrics. Current applications include rearrangements, identification, matching and quantile regression
see, e.g. \citet{GalichonFernandezValChernozhukov2010}, \citet{EkelandGalichonHenry},
\citet{ChiapporiMcCannNesheim} and \citet{GalichonSalanie2010}.
Techniques from the transportation distance are often used for finding
more tractable dual problems.

Estimation of econometric
models based on the independence between the exogenous variables and the unobserved
disturbance term has been explored in the literature. 
In a seminal work,  \cite{Manski1983} developed an approach of minimizing the distance from the independence condition for estimation.
The procedure to estimate the parameters $\theta_{0}$ uses a criterion function that
compares the mean-square distance between a joint c.d.f. and the product of its marginal c.d.f.s. 
In order to derive the asymptotic properties of Manski's estimatior, \citet{BrownWegkamp2002}
extend the criterion function to an integrated mean-square
distance criterion function. Importantly, the integral is taken with
respect to a weighted measure on the product space of $\varepsilon$
and $x$. Their approach provides a foundation for estimating $\theta_{0}$
from general nonlinear simultaneous equation models.\footnote{As discussed in \cite{BenkardBerry2006}, the identification condition that  $\varepsilon$ is independent of $X$ generally does not hold for $\theta\neq \theta_{0}$. For an integrated criterion, such a dependence between $X$ and $\varepsilon$ under different $\theta$ reflects thorough the weighted measure. } However, it has been left
unspecified the essential argument on the practical weighting measure
of the product space of $\varepsilon$ and $x$.   
The estimator studied in this paper is also related to other alternative methods. For instance, \cite{KomunjerSantos2010} develop a semiparametric estimator for invertible nonseparable models with scalar latent variables and an infinite dimensional component. \cite{Santos2011} proposes an M-estimator under the assumption that the model is strictly monotonic in the scalar error term and derives its corresponding asymptotic properties.

The remaining of the paper is organized as follows. In Section \ref{sec:Model-Framework}
we present the basic formulation of the simultaneous equation models.
Section \ref{sub:Estimator} describes the criterion function including its primal
and dual. In addition, it presents a method for representing the infinite dimensional criterion
function. Section \ref{sec:Inference} discusses identification, and provides both asymptotically biased and unbiased
estimators. Large sample statistical theorems are also
included. Practical computation is presented in Section \ref{sec:Computation}.
Section \ref{sec:Related-Inferential-Methods} discusses
several related estimates of the optimizing distance or divergence
of probability measures. Conclusions appear in Section \ref{sec:Conclusion}.

\textbf{Notations}: Throughout the paper, we use capital alphabet
to denote random variable, i.e. $X$; use bold letter alphabet to
denote a bundle of realizations, i.e. $\mathbf{x}$; and use the alphabet
to denote the deterministic value, i.e. $x$; $i$th-realization is
denoted with a subscript i.e. $x_{i}$.

In this paper, $\mathcal{H}$ always denotes a Hilbert space, i.e.
a complete, norm vector space endowed with an inner product $\left\langle \cdot,\cdot\right\rangle $
giving rise to its norm via $\|x\|=\sqrt{\left\langle x,x\right\rangle }$.
Let $B_{\mathcal{H}}:=\{x\in\mathcal{H}:\|x\|\leq1\}$ be the closed
unit ball of $\mathcal{H}$. 
Every separable Hilbert space is isometrically isomorphic
to the space of all square-summable sequences ($\ell_{2}$ space).
We denote $\|\cdot\|_{\infty}$ as the supremum norm such that $\|f\|_{\infty}:=\sup_{x\in\mathcal{H}}|f(x)|.$

\section{Model Framework\label{sec:Model-Framework}}

\subsection{Model Setup and Assumptions\label{sub:Model-Setup}}

We begin by describing the model framework, notation, and main assumptions for the subsequent
developments. The setup is similar to \citet[hereafter BW]{BrownWegkamp2002}, but with important differences. We require greater generality,
since we deal with distributions on compact metric spaces, and densities
are not assumed to exist. To account for the general cases, we do not require existence of the reduce form.

Consider the following model 
\begin{equation}
\varepsilon=\rho(X,Y,\theta),\label{eq1}
\end{equation}
where $\rho(\cdot)$ is the function describing the structural model,
$Y$ is the dependent observed random vector, $X$ is an exogenous
observed random vector, $\varepsilon$ is the model error which
is endogenous but is latent exogenous under the truth, and $\theta$ is a vector of unknown parameters.

First we define the distance used in the simultaneous equation model
analyzed in this paper. We define the minimum distance from independence
conditions, where the function $\mathbf{d}(\cdot,\cdot)$ is a metric
on the space of joint cumulative distribution functions (c.d.f.'s)
of $(X,\varepsilon)$, where $\varepsilon$ takes values in $\mathbb{R}^{K}$.
Let $H_{\theta}(x,\varepsilon)$ be the joint c.d.f of $(X,\varepsilon)$,
and $F(x)$ and $G_{\theta}(\varepsilon)$ be the respective associated
marginal c.d.f.'s. 
As a consequence of the identification assumption, which will be discussed
below in detail, for any metric function $\mathbf{d}(\cdot,\cdot)$
on the space of measures, 
\[
\mathbf{d}(H_{\theta}(x,\varepsilon),F(x)G_{\theta}(\varepsilon))=0
\]
if and only if $X$ and $\varepsilon$ are stochastically independent.
Using this condition, we study extremum estimators that minimize
the above distance.

We are interested in estimating the parameters $\theta$ in equation \eqref{eq1}. Consider
the following assumptions. 
\begin{itemize}
\item[C1.] Parameter: The true $\theta_{0}$ belongs to a parameter space $\Theta$. 
\item[C2.] Observations (independent): $\mathbf{x}:=(x_{1},\dots,x_{n})\in\mathbb{R}^{L\times n}$
are $n$ draws of random vector $X$. Any $X\in\mathcal{X}$. Let
$(\Omega,\mathcal{F}_{X},F)$ be a probability space, where $\Omega$
is a sample space, $\mathcal{F}_{X}$ is a $\sigma$-field, and $F$
is a sample probability measure on $(\Omega,\mathcal{F}_{X})$. Then
$\mathbf{x}\in\{X(\omega):\Omega\rightarrow\mathcal{X}\}$. Let $\mathcal{X}$
be a compact metric space. 
\item[C3.] Observations (dependent) or model realizations: $\mathbf{y}:=(y_{1},\dots,y_{n})\in\mathbb{R}^{K\times n}$
are $n$ draws of random vector $Y$. Any $Y\in\mathcal{Y}$. $Y:=\rho^{-1}(X,\varepsilon,\theta_{0}):\Upsilon\rightarrow\mathcal{Y}$
where $\Upsilon:=\mathcal{X}\times\Sigma\times\Theta$. Let $\mathcal{Y}$
be a compact metric space. 
\item[C4.] Observations (both independent and dependent): $\mathbf{z}:=(\mathbf{x},\mathbf{y})\in\mathbb{R}^{(L+K)\times n}$
are $n$ draws of joint observations $Z:=(X,Y)$. Let $\mathcal{Z}:=\mathcal{X}\times\mathcal{Y}$. 
\item[C5.] Unobservable variable: $\varepsilon=\rho(X,Y,\theta)$. For all
$\theta\in\Theta$, $\rho(\cdot,\theta)$ is a mapping from $\mathcal{Z}$
into $\mathbb{R}^{K}$. Let $(\mathcal{Z}\times\Theta,\mathcal{B}(\mathcal{Z}\times\Theta),G_{\theta})$
be a probability space, where $\mathcal{B}(\mathcal{Z}\times\Theta)$
is the Borel $\sigma$-algebra of the Cartesian product $\mathcal{Z}\times\Theta$.
$G_{\theta}(\varepsilon)$ is a probability measure defined on $(\mathcal{Z}\times\Theta,\mathcal{B}(\mathcal{Z}\times\Theta))$. 
\item[C6.] Identification condition: $\varepsilon=\rho(X,Y,\theta)$ is independent
of $X$ if and only if $\theta=\theta_{0}$. Let $H_{\theta}(x,\varepsilon)$
be the joint probability measure of $(X,\varepsilon)$. The independent
assumption between $X$ and $\rho(X,Y,\theta_{0})$ is equivalent
with 
\[
H_{\theta}(x,\varepsilon)=F(x)G_{\theta}(\varepsilon)\quad\mbox{iff }\quad\theta=\theta_{0}
\]
for any $(x,\varepsilon)\in\mathcal{X}\times\Sigma$. 
\item[C7.] Structure: $\mathbb{S}$ is an ordered pair $\left(\rho(X,Y,\theta),H_{\theta_{0}}(x,\varepsilon)\right)$.
The observations $\mathbf{z}=(\mathbf{x},\mathbf{y})$ are generated
by the structure $ $$\mathbb{S}_{0}:=\left(\rho(X,Y,\theta_{0}),H_{\theta_{0}}(x,\varepsilon)\right)$. 
\item[C8.] Empirical probability measures: $F_{n}(x)$, $G_{n\theta}(\varepsilon)$
and $H_{n\theta}(x,\varepsilon)$ are the empirical measures associated
with $F(x)$, $G_{\theta}(\varepsilon)$ and $H_{\theta}(x,\varepsilon)$
respectively based on the observed data $\mathbf{z}$. For example,
$F_{n}(x)$ is a Borel probability measure defined on $\mathcal{X}$
as a discrete measure with a number $n$ of points, such that 
\begin{equation}
G_{n\theta}(\varepsilon):=\frac{1}{n}\sum_{i=1}^{n}\mathbf{1}\{\rho(z_{i},\theta)\leq\varepsilon\},\quad dG_{n\theta}(\varepsilon):=\frac{1}{n}\sum_{i=1}^{n}\delta_{\rho(z_{i},\theta)}(\varepsilon)\label{eq:EmpiricalCDFs}
\end{equation}
where $\mathbf{1}\{\cdot\}$ is the indicator function and $\delta_{\rho(z_{i},\theta)}(\cdot)$
is the Dirac function at position $\rho(z_{i},\theta)$. The notation
$d(\cdot)$ in this paper always denotes the differential notation. 
\item[C9.] No reduced form: $\rho(X,\varepsilon,\theta)$ is non-invertible
for $\theta\neq\theta_{0}$. It means that there is no way of estimating
$\theta$ via $Y=\rho^{-1}(X,\varepsilon,\theta)$ or $\varepsilon=\rho(X,\rho^{-1}(X,\varepsilon,\theta),\theta)$.
But at $\theta_{0}$, $Y=f(X,\varepsilon)$ where $f\in\mathcal{F}$
is an unknown function and $\mathcal{F}$ can be embedded in a Hilbert
space. \end{itemize}
Conditions C1-C4 are standard in the econometric literature. In BW,
compactness of the parameter space $\Theta$ is assumed while we stay
with a general parameter space. But as a compensation, we impose the
compactness for $\mathcal{Y}$, the sample space of dependent observations,
in C3. The motivation of compactness in C2 and C3 is to ensure closeness
and boundeness for both observable variables sample spaces $\mathcal{Z}$
in C4.\footnote{If $\mathcal{Z}$ is non-compact, then a kernel
function with non-compact support may not be able to identify the
true parameter $\theta_{0}$.} C5 concerns the measurability of $\mathcal{Z}$. The measurability of $Z$ induces the measurability of $\varepsilon=\rho(z,\theta)$. Therefore, the probability
measure of $\varepsilon$ always exists. Conditions C6 and C7 impose identification and are the
same as the identification conditions in \citet{Manski1983} and BW.
C6 is the independent condition for the population probability $H_{\theta}(x,\varepsilon)$.
The joint probability $H_{\theta}(x,\varepsilon)$ equals the product
of marginal c.d.f.s $F(x)$ and $G_{\theta}(\varepsilon)$ if and
only if $\theta=\theta_{0}$. This is the key device to uniquely identify
$\theta_{0}$. This paper focuses on estimation. But we highlight that, differently from BW, we establish identification in the dual problem. There is an extensive literature discussing identification in simultaneous equation models as that in our primal problem. 
We refer the reader to \citet{BenkardBerry2006} and the literature therein.
C7 states that the pair observation $(X,Y)$ generated by $\rho(X,Y,\theta_{0})$
will give different simultaneous equation system $\rho(X,Y,\theta)$
under different value of $\theta$. C8 defines the empirical c.d.f.
and considers the weighted Dirac measure as the Randon-Nikodym derivative
of the empirical c.d.f.. Similar forms of C7 and C8 can be found in Section
2 and Section 3 respectively in BW.
Assumption C9 is novel and 
states that the simultaneous structure
is not invertible. This is an important assumption allowing flexibility and generality of the proposed methods because
the estimator does not require knowledge on the reduced form.
Thus, to attain an estimator of $\theta_{0}$ by simply
an inversion is not of our concern. However, with observations of
$Y$, one can generate a nonparametric approximation for the underlying
true function $f$. In order to ensure the approximation
is feasible, the unknown true function $f$ should come from an approximable
functional class $\mathcal{F}$. We restrict this class to reproducing
kernel Hilbert space (RKHS). More discussion about RKHS will be given
in Section \ref{sub:Mathematical-Preliminary} of the Appendix.

\subsection{An Example}

We now illustrate the model in light of the imposed conditions, especially C9.
We start with a standard example satisfying our model specification \eqref{eq1}.  Consider a general separable supply and demand model
\begin{align}
Q &= D(Z,P;\theta_{D}) + \varepsilon_{D} \notag \\
P&=S(W,Q;\theta_{S}) + \varepsilon_{S}, \notag
\end{align}
where $Q$ is quantity, $P$ is price, $(Z,W)$ are characteristics, $(\varepsilon_{D},\varepsilon_{S})$ are shocks, $(\theta_{D},\theta_{S})$ the parameters of interest, and the functions $D(\cdot)$ and $S(\cdot)$ are allowed to be nonlinear functions. In the representation (\ref{eq1}) we have that $Y=(Q,P)$, $X=(Z,W)$, $\varepsilon=(\varepsilon_{D},\varepsilon_{S})$, $\theta=(\theta_{D},\theta_{S})$, and the structural model $\rho=(Q - D(\cdot), P - S(\cdot))$.  

If a simultaneous equation model satisfies C1-C8 and has the specification in \eqref{eq1}, one can establish the integrated distance function 
\begin{equation}
\int\mathbf{d}(H_{\theta}(x,\varepsilon),F(x)G_{\theta}(\varepsilon))d\mu(x,\varepsilon),\label{eq:integratedDF}
\end{equation}
which is continuous on $\Theta$ and $\theta_{0}$ is the unique global
minimum of this integrated distance function for a bounded measure $\mu(x,\varepsilon)$ of $(x,\varepsilon)$ and a compact parameter space $\Theta$. This result is a summary of Theorems 2 and 3 of BW. 

To consider an example satisfying C9, as in \citet{BenkardBerry2006}, define a non-separable supply and demand model,
\begin{equation*}
Q = D(Z,P, \varepsilon_{D} ;\theta_{D}),\,\, P =S(W,Q, \varepsilon_{S};\theta_{S}),
\end{equation*}
with a triangular structure for the joint distribution of $P$ and $Q$ such that
\begin{equation*}
\tilde{U}_{1}=\Psi(Q|Z,W),\,\,\tilde{U}_{2}=\Psi(P|Z,W,Q),
\end{equation*}
where $\Psi(\cdot)$ is the (known) joint distribution of the endgenous variables and exogenous shifters, and 
$\tilde{U}$'s are constructed such that they are independent of one another as well as independent of $(Z,W)$. The triangular construction implies that the reduced form takes the form $Q=f(Z,W,\tilde{U}_{1})$,
whereas in the general model the reduce form takes the form
\begin{equation*}
Q=f(Z,W,\varepsilon_{D},\varepsilon_{S}).
\end{equation*}
Thus, the triangular system cannot retrieve the true reduce form which satisfies C9. 

It is important to highlight the role of condition C9 and its link with the literature. This assumption specifies the underlying relation between $Y$ and $X$ when $\theta=\theta_{0}$, which can be nonparametrically recovered. Assumption C9 excludes the possibility of reduced form relation. Models  allowing for a reduced form have been addressed in \citet{BenkardBerry2006}. Their concern is whether a derivative condition on $\rho(\cdot)$ is sufficient to derive a reduced form, and consequently identification. Differently from BW and \citet{BenkardBerry2006}, the setup in this paper neither assume a derivative condition as in Lemma 3.1 of the later, nor assume the existence of a reduced form.

A generalization of \eqref{eq:integratedDF}
is the main concern in this paper. To estimate $\theta_{0}$ via \eqref{eq:integratedDF},
BW consider a specified bounded measure $\mu$ and a compact parameter
space. Importantly, both of these two conditions will be relaxed later
in this paper. We will estimate an optimal measure $\gamma$ (it differs
from the original notation $\mu$) and consider a general topological
vector space $\Theta$. To achieve this we first propose a new estimator.
We show that the new estimator generalizes \eqref{eq:integratedDF} and induces a dual representation.
Given this dual representation setup, we show that the problem is
still identifiable. The key device of our identification procedure
is to select a proper integrated distance function so that $H_{\theta}(x,\varepsilon)$
and $P_{\theta}(x,\varepsilon)$ are from a complete separable metric
space. If such an integrated distance similar to \eqref{eq:integratedDF}
can induce a complete separable metric space for $H_{\theta}(x,\varepsilon)$
and $P_{\theta}(x,\varepsilon)$ on $\mathcal{Q}$, then any $\theta$
generating the associated distance value of \eqref{eq:integratedDF} will be separated
from the others. Thus $\theta_{0}$, which gives a unique value of
this integrated distance, is well separated from the other $\theta$'s.%
\footnote{\textcolor{red}{\label{fn:completeSep}}This idea is based on Ascoli's
theorem, see i.e. \citet[Exercise 2.10][]{Shorack2000}. Ascoli's
theorem states that for a class of equicontinuous functions mapping
from a complete separable metric space $\mathscr{P}$ to another metric
space $\Theta$, any sequence of such functions with a compact
support will uniformly converge. The mapping in our setup is an estimator.
The complete separable metric space is for $H_{\theta}(x,\varepsilon)$
and $P_{\theta}(x,\varepsilon)$. Equicontinuouity implies that if
any $\theta$ such that $d(\theta,\theta_{0})\geq\delta$, then the
distance between $H_{\theta}$ and $H_{\theta_{0}}$ (or $P_{\theta}$
and $P_{\theta_{0}}$) is larger than some $\epsilon$. This is exactly
the identification condition.}

\section{Criterion Function \label{sub:Estimator}}
This section establishes the grounds for constructing the minimum integrated distance estimator in the next section.
First, we define the primal objective function from the independence condition. Second, we reformulate the problem with a new criterion function based on the  Monge-Wasserstein distance. Third, we establish the validity of the dual problem. Finally, we establish a kernel representation theorem. 

\subsection{Primal Objective Function}
With a bounded measure $\mu$ on $\mathcal{Z}$, under the independence assumption the
criterion function for estimation of $\theta_{0}$ is given by
\begin{equation}
M(\theta)=\int\mathbf{d}(H_{\theta}(x,\varepsilon),F(x)G_{\theta}(\varepsilon))d\mu(x,\varepsilon).\label{eq:OriginalProb}
\end{equation}
This distance function is based on the mean-square distance. The criterion
function is minimized at $\theta_{0}$. In the literature, statistical estimation has been
based on the empirical counterpart of $M(\theta)$, 
\begin{equation}
\widehat{M}(\theta)=\int\mathbf{d}(H_{n\theta}(x,\varepsilon),F_{n}(x)G_{n\theta}(\varepsilon))d\mu(x,\varepsilon),\label{eq:OriginalProbEmpirical}
\end{equation}
where $H_{n\theta}(x,\varepsilon)$, $F_{n}(x)$, and $G_{n\theta}(\varepsilon)$ are as defined in \eqref{eq:EmpiricalCDFs}.

An open question for this estimation procedure is how to construct
an estimator for the bounded measure $\mu$. Actually, in practice
the selection of $\mu$ will affect the estimation scheme substantially.
If $\mu$ is attached to a specific measure form, then the natural
question is what form of $\mu$ would be in practice. One possibility
for practical implementation would be to use a grid search on $\mu$
as in \citet{BrownWegkamp01}. However, this procedure
implies the use of a uniform empirical distribution for $\mu$, which
might not be the correct underlying distribution. Below, we show numerically
that an optimal choice of $\mu$ is quite different from the uniform
measure.

In \eqref{eq:OriginalProb}, BW require the integral measure $\mu(x,\varepsilon)$ to be independent
of $\theta$. Without this condition, there may exist some $\theta\neq\theta_{0}$
such that 
\[
n^{-1}\sum_{i=1}^{n}\mathbf{d}(H_{n\theta}(x_{i},\varepsilon_{i}),F_{n}(x_{i})G_{n\theta}(\varepsilon_{i}))=0.
\]
A simple choice of this empirical measure is to set $\mu(x,\varepsilon)=H_{n\theta}(x,\varepsilon)$.
That means the empirical summation is taken over the data $(x_{i},\varepsilon_{i})$
generated by the model $\rho(X,\varepsilon,\theta)$ at $\theta$.
The summation implies that the value of $\theta$ matters for evaluating
the integral. By endogenizing the role of $\theta$ in the integrated
minimum distance function, we can relax the requirement of independence
of $\mu$ in \eqref{eq:OriginalProb}.

Therefore, the first goal of this paper is to propose a new estimator
in which we endogenize the measure $\mu(x,\varepsilon)$ and then
estimate the new endogenized measure and $\theta_{0}$ simultaneously while making minimal assumptions
regarding the measure $\mu(x,\varepsilon)$. This is an important
innovation to practitioners since the estimation of $\mu(x,\varepsilon)$
affects the estimation of the parameters of interest. We will illustrate
that in the numerical simulations below. In addition, we provide a
general result for estimating these measures. Formally, we formulate
the following inferential procedure: 
\begin{description}
\item [{(M)}] Let the joint probability measure $H_{\theta}(x,\varepsilon)$
and the mixing probability measure $P_{\theta}(x,\varepsilon)=F(x)G_{\theta}(\varepsilon)$
be Borel probability measures defined on a domain $\mathcal{Q}=\mathcal{X}\times\Sigma$.
Given observations $ $$\mathbf{x}\in\mathcal{X}$ and generating
realizations $\varepsilon=(\rho(x_{1},y_{1},\theta),\dots,\rho(x_{n},y_{n},\theta))$,
we consider that $(\mathbf{x},\varepsilon)\sim H_{\theta}(x,\varepsilon)$
and $(\mathbf{x}',\varepsilon')\sim P_{\theta}(x',\varepsilon')$ are
drawn independently and identically distributed (i.i.d) from two unknown
probability measures on $\mathcal{Q}$. We represent $\mu(x,\varepsilon)$
as a transport measure $\gamma(x,\varepsilon;x',\varepsilon')$ that pushes $H_{\theta}(x',\varepsilon')$ towards
$P_{\theta}(x,\varepsilon)$. We show that there is an optimal $\gamma$
and represent this optimal measure using kernels. By this optimal
measure, we introduce a new criterion (Wasserstein distance function)
and develop the associated estimator and testing procedure.
\end{description}

\subsection{Transport Measure Representation}
In this section, we reformulate the primal problem described above and represent $\mu(x,\varepsilon)$ as a transport measure.
We start by reformulating the problem \eqref{eq:OriginalProb} with
a new criterion $W(\theta)$. The criterion is based on the Wasserstein
or Monge-Wasserstein distance. Following the definition in \citet[p.420,][]{Dudley2002},
we introduce the criterion: 
\begin{equation}
W(\theta):=\inf_{\gamma\in\Gamma(H_{\theta},P_{\theta})}\left\{ \int\mathbf{d}(q,q')d\gamma(q,q'):\: q\sim H_{\theta}(x,\varepsilon)\mbox{ and }q'\sim P_{\theta}(x,\varepsilon)\right\} ,\label{eq:Kantonovich}
\end{equation}
where $\gamma(\cdot)$ is the transport measure, $\Gamma(H_{\theta},P_{\theta})$ is the set of jointly distributions
on $\mathcal{Q}\times\mathcal{Q}$ with the marginals of $\gamma$ given by $H_{\theta}(x,\varepsilon)$
and $P_{\theta}(x,\varepsilon)$. Note that $H_{\theta}(x,\varepsilon)$ as a marginal of $\gamma$ is still jointly distributed for $(x,\varepsilon)$. The set $\Gamma$ refers to a product of probability measures. The function $\mathbf{d}(\cdot,\cdot)$ is a
metric function for samples on $\mathcal{Q}$. The differences between
the original problem \eqref{eq:OriginalProb} and the problem  \eqref{eq:Kantonovich} are
threefold.
First, the criterion function in \eqref{eq:Kantonovich} distinguishes
the joint observations $(q_{1},q_{2},\dots)=(\{x_{1},\varepsilon_{1}\},\{x_{2},\varepsilon_{2}\},\dots)$
and the marginal observation $(q_{1}^{'},q_{2}^{'},\dots)=(\{x_{1},\varepsilon_{1}^{'}\},\{x_{2},\varepsilon_{2}^{'}\},\dots)$.
The observations $\varepsilon$ in $Q:=(X,\varepsilon)$ are drawn
from $\varepsilon=\rho(X,Y,\theta)$ which are jointly distributed
with $X$. While the \emph{generic} observations $\varepsilon'$ in
$Q':=(X,\varepsilon')$ are drawn from a marginal distribution $G_{\theta}(\varepsilon)$
independently of $X$. Empirically, the number of observations in
these two cases are also different. We denote $\mathbf{q}$ as $(q_{1},\dots,q_{n})$
and denote $\mathbf{q}'$ as $(q_{1}^{'},\dots,q_{m}^{'})$.%
\footnote{More generally, we can think $\mathbf{q}'=(q_{1}^{'},\dots,q_{m}^{'})$
are from a resampling scheme of $\mathbf{q}$. Then even if $m=n$,
one should realize that $\mathbf{q}'\neq\mathbf{q}$. %
} Second, the measure $\gamma$ in \eqref{eq:Kantonovich} is defined
as a product measure of $H_{\theta}$ and $P_{\theta}$. Unlike the
unspecified $\mu(x,\varepsilon)$ in the original problem \eqref{eq:OriginalProb},
$\gamma$ is restricted to the set $\Gamma(H_{\theta},P_{\theta})$
such that any element in this set satisfies: 
\begin{equation} \label{eq:SampleMarginals}
H_{n\theta}(x_{i},\varepsilon_{i})=\frac{1}{m}\sum_{j=1}^{m}\gamma((x_{i},\varepsilon_{i}),(x_{j},\varepsilon_{j}^{'})),\quad P_{n\theta}(x_{j},\varepsilon_{j}^{'})=\frac{1}{n}\sum_{i=1}^{n}\gamma((x_{i},\varepsilon_{i}),(x_{j},\varepsilon_{j}^{'})).
\end{equation}
Third, the distance function in \eqref{eq:Kantonovich}, $\mathbf{d}(\cdot,\cdot)$,
is for samples $(x_{i},\varepsilon_{i})$ and $(x_{j},\varepsilon_{j}^{'})$
not for their c.d.f.s. 

Similarly to \eqref{eq:OriginalProbEmpirical}, the new minimum distance
estimator will be based on the empirical Wasserstein distance defined as
follows: 
\begin{align}
\widehat{W}(\theta):= & \inf_{\gamma\in\Gamma(H_{n\theta},P_{m\theta})}\left\{ \int\mathbf{d}(q,q')d\gamma(q,q'):\: q\sim H_{n\theta}(x,\varepsilon)\mbox{ and }q'\sim P_{m\theta}(x,\varepsilon)\right\}. \label{eq:Wdist}\\
= & \inf_{\gamma(q_{i},q_{j})\in\Gamma(H_{n\theta},P_{m\theta})}\left\{ \frac{1}{nm}\sum_{j=1}^{m}\sum_{i=1}^{n}\mathbf{d}(q_{i},q_{j}^{'}):\: q_{i}\sim H_{n\theta}(x,\varepsilon)\mbox{ and }q_{j}^{'}\sim P_{m\theta}(x,\varepsilon)\right\} \notag.
\end{align}
The subscript $m$ indicates that $q^{'}$ are drawn from $m$ samples.
One can consider $m=n$ as a special case. The above criterion function \eqref{eq:Wdist} and the corresponding marginals in \eqref{eq:SampleMarginals} contain $\gamma(\cdot)$. Thus, without further restrictions, $\gamma(\cdot)$ is an infeasible element with infinite dimensions. Next we provide a tractable representation of $\widehat{W}(\theta)$ in order to achieve feasible estimation. The infinite dimension issue will be solved by considering the dual representation of $\widehat{W}(\theta)$.

\begin{rem} The interpretation of \eqref{eq:Kantonovich} is the
cost in terms of a distance function $\mathbf{d}(\cdot,\cdot)$ of
transferring a mass $Q:=(X,\varepsilon)$ distributed according to
$H_{\theta}(x,\varepsilon)$ to a mass $Q':=(X',\varepsilon')$ distributed
according to $P_{\theta}(x,\varepsilon)$. The measure $\gamma\in\Gamma$
is the transportation schedule between $Q$ and $Q'$.\end{rem}

\begin{rem}The criterion $W(\cdot)$ comes from a general class called
$k$-Wasserstein distance $W^{k}(H,P):=\left\{ \inf_{\gamma\in\Gamma(H,P)}\int\mathbf{d}(q,q')^{k}d\gamma(q,q')\right\} ^{\frac{1}{k}}$.
Let $\mathscr{P}^{k}(\mathcal{Q})$ denote the space of Borel probability
measures on $\mathcal{Q}$ with the first $k$-th moments. For any
sample space with finite moment, the space of measures endowed with
$W^{k}$ metric is a complete separable metric space \citep[p.94,][]{CedricVillani2009}.
The most useful versions of $k$-Wasserstein distance are $k=1,2$.
In our context, $W(\theta)$ is the $1$-Wasserstein distance. Note
that both measures $H_{\theta}(x,\varepsilon)$ and $P_{\theta}(x,\varepsilon)$
belong to $\mathscr{P}^{1}(\mathcal{Q})$. Thus, $W(\theta)$ induces
a complete separable metric space for $H_{\theta}(x,\varepsilon)$
and $P_{\theta}(x,\varepsilon)$ on $\mathcal{Q}$. This complete
separable metric space with compact supports imply the identification
of $\theta_{0}$ in our dual problem.%
 Therefore, except the case that $H_{\theta_{0}}(x,\varepsilon)=P_{\theta_{0}}(x,\varepsilon)$,
any $H_{\theta}(x,\varepsilon)$ will differ from $P_{\theta}(x,\varepsilon)$
in $W(\theta)$-distance. Thus any $W(\theta)$ will be separated
from zero if $\theta\neq\theta_{0}$. In other words, $\theta_{0}$
is well separated from the other $\theta$'s.\textcolor{red}{{} }\end{rem}

\begin{rem} There are many possible choices of distances between
probability measures, such as the Levy-Prokhorov \citep{BrownMatzkin1998},
or the weak-{*} distance. Among them, $W^{k}$ metrizes weak convergence
\citep[Theorem 6.9,][]{CedricVillani2009}, that is, a sequence $\{P_{n}\}_{n\in\mathbb{N}}$
of measures converges weakly to $P$ if and only if $W^{k}(P_{n},P)\rightarrow0$.
However, as pointed out in \citet[p.98,][]{CedricVillani2009} ``Wasserstein
distances are rather strong,... a definite advantage over the weak-{*}distance\textquotedblright{}.
Also, it is not so difficult to combine information on convergence
in Wasserstein distance with some smoothness bound, in order to get
convergence in stronger distances. \end{rem}

\begin{rem} Note that, for clarity, we adhere to the strict association
of a random variable with a distribution. Thus $H_{n\theta}(x,\varepsilon)$
and $P_{m\theta}(x,\varepsilon)$ are associated with distinct random
variables ($q$ and $q'$, respectively) representing distinct states
of knowledge about the same set of variables. This view is slightly
different than the usual notion of a single random variable for which
we ``update\textquotedblright{} our belief. \end{rem}

\begin{rem} When the metric $\mathbf{d}(q,q')$ for distribution
laws on $\mathbb{R}$ is the absolute function $|q-q'|$, $W(\cdot)$
is the Gini index, see e.g. \citet[p.435,][]{Dudley2002}.
\end{rem}

\subsection{The Dual Problem \label{sub:The-Dual}}

While the independence condition is for c.d.f.s, the distance function
in the optimization described in \eqref{eq:Kantonovich} is for samples. Thus, by transferring the focus
from samples to c.d.f.s, we need a dual problem of \eqref{eq:Kantonovich}. We note that, in this paper, both the
identification and estimation will be conducted in this dual problem.

The dual representation of $W(\theta)$ in \eqref{eq:Kantonovich}
was introduced by Kantorovitch in order to solve a convex linear
program. For the empirical distance $\widehat{W}(\theta)$ in \eqref{eq:Wdist}, the dual corresponds
to the convex relaxation of a combinatorial problem when the densities
are sums of the same number of Diracs. This relaxation extends the
notion of $\Gamma(H_{n\theta},P_{m\theta})$ to arbitrary sum of weighted
Diracs, see for instance \citet{Villani2003}. The following is called
Kantorovich-Rubinstein theorem. It introduces the Kantorovich's duality
principle to $W(\theta)$ and $\widehat{W}(\theta)$. \begin{thm}
\label{KantorovichDual}Let $H_{\theta},P_{\theta}\in\mathscr{P}^{1}(\mathcal{X})$.
The dual problem to \eqref{eq:Kantonovich} by Kantorovich's duality
principle is: 
\[
W(\theta)=\|H_{\theta}(x,\varepsilon)-P_{\theta}(x,\varepsilon)\|_{l}^{*}:=\sup_{\|f\|_{l}\leq1}\left|\int fd\left\{ H_{\theta}(x,\varepsilon)-P_{\theta}(x,\varepsilon)\right\} \right|,
\]
where $\|f\|_{l}:=\sup_{x\neq y\in\mathcal{X}}|f(q)-f(q')|/\mathbf{d}(q,q')$
is the Lipschitz semi-norm for real valued continuous $f$ on $\mathcal{Q}$.
Here $d\{\cdot\}$ is the differential notation. The optimal transportation
problem for discrete measure $H_{n\theta}(x,\varepsilon)$ and $P_{m\theta}(x,\varepsilon)$
becomes 
\begin{align*}
\widehat{W}(\theta)=  \|H_{n\theta}(x,\varepsilon)-P_{m\theta}(x,\varepsilon)\|_{l}^{*}:= & \sup_{\|f\|_{l}\leq1}\left|\int fd\left\{ H_{n\theta}(x,\varepsilon)-P_{m\theta}(x,\varepsilon)\right\} \right|, \\
H_{n\theta} = \frac{1}{n^{2}}\left[\sum_{i=1}^{n}\sum_{j=1}^{n}\mathbf{1}\{x_{i}\leq x,\rho(z_{j},\theta)\leq\varepsilon\}\right], & \, P_{n\theta}= \frac{1}{n^{2}}\left[\sum_{i=1}^{n}\mathbf{1}\{x_{i}\leq x\}\right]\left[\sum_{i=1}^{n}\mathbf{1}\{\rho(z_{i},\theta)\leq\varepsilon\}\right].
\end{align*}
\end{thm}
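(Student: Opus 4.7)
The plan is to invoke the general Kantorovich duality theorem (valid for lower semi-continuous costs on Polish spaces, which applies here because $\mathcal{Q}$ is compact metric by C2--C4 and $\mathbf{d}$ is continuous) in the symmetric form
\[
\inf_{\gamma\in\Gamma(H_{\theta},P_{\theta})}\int \mathbf{d}(q,q')\,d\gamma(q,q') \;=\; \sup_{(\phi,\psi)\in\Phi_{\mathbf{d}}} \left\{\int \phi\, dH_{\theta}+\int \psi\, dP_{\theta}\right\},
\]
where $\Phi_{\mathbf{d}}$ is the set of bounded continuous pairs with $\phi(q)+\psi(q')\le \mathbf{d}(q,q')$. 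This underlying duality can be established either via the Fenchel--Rockafellar theorem applied to the linear program on $\mathcal{M}(\mathcal{Q}\times\mathcal{Q})$, or via a Hahn--Banach separation argument on the cone of admissible $(\phi,\psi)$. I would cite Villani's proof rather than re-derive it.

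Second, I would specialize to the metric cost. The key reduction is to show that the supremum can be restricted to pairs of the form $(f,-f)$ with $f$ being $1$-Lipschitz with respect to $\mathbf{d}$. The argument is the $\mathbf{d}$-transform: for any admissible $\phi$, set
\[
\phi^{c}(q'):=\inf_{q\in\mathcal{Q}}\bigl\{\mathbf{d}(q,q')-\phi(q)\bigr\},\qquad \phi^{cc}(q):=\inf_{q'\in\mathcal{Q}}\bigl\{\mathbf{d}(q,q')-\phi^{c}(q')\bigr\}.
\]
Because $\mathbf{d}$ satisfies the triangle inequality and is symmetric, $\phi^{cc}$ is automatically $1$-Lipschitz and $\phi^{c}=-\phi^{cc}$, while replacing $(\phi,\psi)$ by $(\phi^{cc},-\phi^{cc})$ only increases the dual value. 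Hence the sup reduces to $\sup_{\|f\|_{l}\le 1}\int f\, d(H_{\theta}-P_{\theta})$; taking $f\mapsto -f$ yields the absolute value and produces the stated formula $W(\theta)=\|H_{\theta}-P_{\theta}\|_{l}^{*}$.

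Third, for the empirical counterpart $\widehat{W}(\theta)$, the same duality applies verbatim: $H_{n\theta}$ and $P_{m\theta}$ are Borel probability measures on the compact set $\mathcal{Q}$ (they are finite convex combinations of Diracs, hence tight), the cost $\mathbf{d}$ is continuous, and $\Gamma(H_{n\theta},P_{m\theta})$ is non-empty (contains the product measure) and weakly compact. The optimization on the primal side is a finite linear program over $nm$ variables with the marginal constraints in \eqref{eq:SampleMarginals}, so strong duality also follows directly from LP duality, giving the same Lipschitz-dual representation with $H_{n\theta}$ and $P_{m\theta}$ substituted. Writing out $H_{n\theta}(x,\varepsilon)$ and $P_{n\theta}(x,\varepsilon)$ as the stated double sums of indicators is then a bookkeeping step from the definitions in C8 and \eqref{eq:EmpiricalCDFs}.

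The main obstacle I anticipate is the rigorous reduction from the two-function dual to the one-function Lipschitz dual via the $\mathbf{d}$-transform, in particular verifying that (i) $\phi^{cc}$ inherits the $1$-Lipschitz property from the triangle inequality, (ii) the relation $\phi^{c}=-\phi^{cc}$ holds when $\mathbf{d}$ is a genuine metric (rather than merely a cost), and (iii) passing to these transforms does not decrease the dual objective. Everything else---existence of the primal minimizer via Prokhorov compactness of $\Gamma(H_{\theta},P_{\theta})$, weak lower semi-continuity of $\gamma\mapsto\int\mathbf{d}\,d\gamma$, and the LP/Hahn--Banach scaffolding---is standard once the compactness assumptions in C2--C4 are in force.
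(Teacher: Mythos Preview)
Your proposal is correct and in fact gives substantially more than the paper does. The paper's own ``proof'' of this theorem is not a proof at all: it is a two-line citation to \citet[Theorem 11.8.2]{Dudley2002} for the Kantorovich--Rubinstein theorem on complete separable metric spaces, with a remark that the original compact-space version is in \citet{Rubinshtein1970}. No argument is supplied in the paper.

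Your sketch---general Kantorovich duality for l.s.c.\ costs, followed by the $\mathbf{d}$-transform reduction $(\phi,\psi)\mapsto(\phi^{cc},-\phi^{cc})$ to collapse the two-potential dual to a single $1$-Lipschitz witness, then symmetry under $f\mapsto -f$ to obtain the absolute value---is exactly the standard route to Kantorovich--Rubinstein as presented in \citet{Villani2003} or \citet{CedricVillani2009}, and is what underlies the cited result in Dudley. The three verification points you flag (Lipschitz inheritance via the triangle inequality, $\phi^{c}=-\phi^{cc}$ for a genuine metric, and monotonicity of the dual under double transform) are the right ones and are all standard; none of them fails here since $\mathcal{Q}$ is compact metric under C2--C4. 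For the empirical part, either route you mention (re-applying the continuous theorem to discrete measures, or invoking finite LP strong duality directly) is fine and matches the paper's later use of LP duality in Corollary~\ref{cor:ImpDual}.
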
 \begin{proof} Please refer to \citet[Theorem 11.8.2,][]{Dudley2002}.
Note that \citet[Theorem 11.8.2,][]{Dudley2002} gives a stronger
result which is for complete separable metric space. But the original
result in \citet{Rubinshtein1970} is for the compact metric space
$\mathcal{Q}$. \end{proof}

The integrals in Theorem \ref{KantorovichDual} are with respect to
empirical measures. In practice, these integrals are replaced by summations
over observations. Recall that $Q$ and $Q'$ are drawn from different
distributions. The empirical Wasserstein distance in \eqref{eq:Wdist} becomes 
\begin{equation}
\widehat{W}(\theta):=\sup_{\|f\|_{l}\leq1}\left|\frac{1}{n}\sum_{i=1}^{n}f(q_{i})-\frac{1}{m}\sum_{j=1}^{m}f(q_{j}^{'})\right|.\label{eq:empiricalMMD}
\end{equation}
where $m$ is not necessarily the same as $n$. 

The criterion function in \eqref{eq:empiricalMMD} is similar to the
total variation distance between $H_{n\theta}$ and $P_{m\theta}$.
Thus, the estimation problem based on integral distance function in
BW is modified to the one based on the total variation distance. The
distance between $H_{n\theta}$ and $P_{m\theta}$ is evaluated by
the comparison of means of $f(\cdot)$ w.r.t. $H_{n\theta}$ and $P_{n\theta}$.
The way of handling two different integrals or summations separately
makes the estimation more feasible. One advantage of this separation
is that the empirical c.d.f. of $G_{\theta}(\varepsilon)$ allows
for resampling scheme: 
\begin{equation*}
m^{-1}\sum_{j=1}^{m}\mathbf{1}\{\rho(x_{j}^{'},y_{j}^{'},\theta)\leq\varepsilon\},
\end{equation*}
where $\{x_{j}^{'},y_{j}^{'}\}_{j\leq m}$ are resamples of $\{x_{i},y_{i}\}_{i\leq n}$.
It also allows for other smoothing techniques for estimating $G_{\theta}(\varepsilon)$.
If the sample size is small, the smoothing technique becomes important
for identifying $\theta_{0}$. Notice that $F_{n}(x)G_{n\theta}(\varepsilon)\neq H_{n\theta}(x,\varepsilon)$ 
\begin{equation*}
\frac{1}{n^{2}}\left[\sum_{i=1}^{n}\mathbf{1}\{x_{i}\leq x\}\right]\left[\sum_{i=1}^{n}\mathbf{1}\{\rho(z_{i},\theta)\leq\varepsilon\}\right]\neq\frac{1}{n^{2}}\left[\sum_{i=1}^{n}\sum_{j=1}^{n}\mathbf{1}\{x_{i}\leq x,\rho(z_{j},\theta)\leq\varepsilon\}\right],
\end{equation*}
for given $(x,\varepsilon)$. Thus if $\theta\neq\theta_{0}$, we
have that
\begin{equation*}
\sum_{i=1}^{n}f(z,\varepsilon)\left[\delta_{\rho(z_{i},\theta)}(\varepsilon)\right]\left[\delta_{x_{i}}(x)\right]\neq\sum_{i=1}^{n}\sum_{j=1}^{n}f(z,\varepsilon)\left[\delta_{\{x_{i},\rho(z_{j},\theta)\}}(x,\varepsilon)\right].
\end{equation*}
But if the sample size is small, i.e. $n=10$, it is very likely to
have an equality from the above expression even if $\theta\neq\theta_{0}$. Nevertheless, with re-sampling devices, one can reduce this small samples issue.

The dual distance $\|H_{\theta}(x,\varepsilon)-P_{\theta}(x,\varepsilon)\|_{l}^{*}$
is an instance of the integral probability metric which has been used
in proving central limit theorems and in empirical process theory
\citep{Muller1997}. The dual distance measures the dis-similarity
between $ $$H_{\theta}(x,\varepsilon)$ and $P_{\theta}(x,\varepsilon)$
on the basis of samples drawn from each of them, by finding a well
behaved function which is large on the points drawn from $H_{\theta}(x,\varepsilon)$
and small on the points drawn from $P_{\theta}(x,\varepsilon)$. This is
equivalent to measure the difference between the mean function values
on two samples.

Theorem \ref{KantorovichDual} establishes the dual problem, and hence the objective function of interest. 
However, there are infinite candidates $f$ satisfying $\|f\|_{l}\leq1$ in equation \eqref{eq:empiricalMMD}.
Thus, both Kantorovich's dual $W(\theta)$ and empirical estimate criterion
$\widehat{W}(\theta)$ are criterion functions over infinite dimensions.
Before we establish identification and the estimator of interest we provide yet another representation to the problem which allows for practical implementation.
Therefore, in the next section, we embed this infinite dimensional problem into a tractable
form so that we can represent the criterion function.

\subsection{Kernel Representation \label{sec:Representation-Theorem}}

Now we establish a kernel representation theorem, such that 
we are able to embed the infinite dimensional criterion function in \eqref{eq:empiricalMMD} into a tractable
space. In doing this, the criterion function is represented in terms of a kernel-based
distance.

\subsubsection{Kernel Representation Theorem}

In this paper, we only consider
continuous kernel because every function induced by a continuous kernel
is also continuous. For a continuous kernel $\mathbf{k}$ on a compact
metric space $(\mathcal{X},d)$, we are interested in the space of
all functions induced by $\mathbf{k}$ that is \emph{dense} in $C(\mathcal{X})$ (the space of \emph{all continuous functions}),
i.e. for every function $f\in C(\mathcal{X})$ and every $\epsilon>0$
there exists a function $g$ induced by $\mathbf{k}$ with 
\[
\|f-g\|_{\infty}\leq\epsilon.
\]

\begin{defn*} If a kernel induces a dense set of $C(\mathcal{X})$,
we say that the kernel is able to \emph{represent} $C(\mathcal{X})$.
\end{defn*}
Next we provide the a kernel representation result.

\begin{thm} \label{Representation-Theorem}(Representation Theorem)
Let $(\mathcal{X},\mathbf{d})$ be a compact metric space and $\mathbf{k}$
is a kernel on $\mathcal{X}$ with $\mathbf{k}(x,x)>0$ for all $x\in\mathcal{X}$.
Suppose that we have an injective feature map $\Phi:\mathcal{X}\rightarrow\ell_{2}$
of $\mathbf{k}$ with $\Phi(x)=(\Phi_{n}(x))_{n\in\mathbb{N}}$. If
$\mathcal{A}:=\mbox{span}\{\Phi_{n}:n\in\mathbb{N}\}$ is an algebra
then $\mathbf{k}$ represents $C(\mathcal{X})$.
\end{thm}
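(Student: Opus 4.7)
The plan is to reduce the claim to a direct application of the Stone--Weierstrass theorem to the algebra $\mathcal{A}$ on the compact Hausdorff space $\mathcal{X}$, and then to identify elements of $\mathcal{A}$ with functions induced by the kernel. The three Stone--Weierstrass hypotheses to verify are that $\mathcal{A}$ sits inside $C(\mathcal{X})$, separates points, and vanishes nowhere; the algebra structure is assumed outright.

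First I would establish $\mathcal{A}\subseteq C(\mathcal{X})$. Continuity of the kernel yields continuity of the feature map $\Phi:\mathcal{X}\to\ell_{2}$ through the standard identity
\[
\|\Phi(x)-\Phi(y)\|_{\ell_{2}}^{2} \;=\; \mathbf{k}(x,x)-2\mathbf{k}(x,y)+\mathbf{k}(y,y).
\]
Since each coordinate projection $\pi_{n}:\ell_{2}\to\mathbb{R}$ is a continuous linear functional, $\Phi_{n}=\pi_{n}\circ\Phi$ is continuous on $\mathcal{X}$, and finite linear combinations inherit continuity.

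Next I would verify the remaining two hypotheses. For point separation, take $x\neq y$ in $\mathcal{X}$; injectivity of $\Phi$ gives $\Phi(x)\neq\Phi(y)$ in $\ell_{2}$, so there exists $n$ with $\Phi_{n}(x)\neq \Phi_{n}(y)$. For non-vanishing, expand $\mathbf{k}(x,x)=\sum_{n}\Phi_{n}(x)^{2}$; since this is strictly positive by hypothesis, for each $x$ there exists $n$ with $\Phi_{n}(x)\neq 0$. The Stone--Weierstrass theorem then yields density of $\mathcal{A}$ in $C(\mathcal{X})$ with respect to $\|\cdot\|_{\infty}$.

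Finally I would connect density of $\mathcal{A}$ to the representation property. Every element of $\mathcal{A}$ has the form $\sum_{n}c_{n}\Phi_{n}(\cdot)=\langle c,\Phi(\cdot)\rangle_{\ell_{2}}$ with $c\in\ell_{2}$ finitely supported, which is a function induced by the feature map of $\mathbf{k}$. Hence, for every $f\in C(\mathcal{X})$ and every $\epsilon>0$, density of $\mathcal{A}$ supplies a $g\in\mathcal{A}$, induced by $\mathbf{k}$, with $\|f-g\|_{\infty}\leq\epsilon$; this is precisely the definition of $\mathbf{k}$ representing $C(\mathcal{X})$. The main obstacle is conceptual rather than computational: one must recognize that the abstract RKHS-style hypotheses (injective $\Phi$, strict positivity on the diagonal, algebra closure of the coordinates) have been chosen precisely to translate into the separation and non-vanishing conditions demanded by Stone--Weierstrass, after which the conclusion is essentially automatic.
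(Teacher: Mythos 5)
Your proposal is correct and follows essentially the same route as the paper, which simply cites Stone--Weierstrass together with the reproducing-kernel representation; you have merely filled in the verification of the hypotheses (continuity of the coordinates $\Phi_{n}$ from the standing assumption that $\mathbf{k}$ is continuous, point separation from injectivity of $\Phi$, non-vanishing from $\mathbf{k}(x,x)=\sum_{n}\Phi_{n}(x)^{2}>0$) and the identification of elements of $\mathcal{A}$ with kernel-induced functions. Nothing in your argument departs from, or is missing relative to, the paper's intended proof.
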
 
\begin{proof}
This is a direct result of representation for reproducing kernels (Theorem \ref{thm:Reproducing}) and Stone-Weierstrass theorem (Thereom \ref{thm:SWtheorem}).  
\end{proof}
In the Appendix, Section \ref{sub:Mathematical-Preliminary}, we state additional mathematical results on
the existence of the kernel function $\mathbf{k}(q)$ for $\mathcal{F}:=\{f\,:\,\|f\|_{l}\leq1\}$.

\subsubsection{$W_{\mathcal{H}}(\theta)$-Distance \label{sub:-Distance}}

The quality of empirical Wasserstein distance $\widehat{W}(\theta)$
as a statistic depends on the class $\mathcal{F}:=\{f\,:\,\|f\|_{l}\leq1\}$.$ $
There are two requirements: (1) $\mathcal{F}$ must be large enough
so that $\widehat{W}(\theta)=0$ if and only if $H_{n\theta}(x,\varepsilon)=P_{n\theta}(x,\varepsilon)$;
(2) for the distance to be consistent, $\mathcal{F}$ needs to be
restrictive enough for the empirical estimate of $\widehat{W}(\theta)$
to converge to its expectation. We will use a unit ball in the Hilbert
space where all functions in the class $\mathcal{F}$ can be represented
by the elements in this ball. This representation will be shown to
satisfy both of the required properties.

In this paper, the class $\mathcal{F}$
is chosen as follows. Given observations $\mathbf{y}=(y_{1},\dots y_{n})$
and $y_{i}=f(x_{i},\varepsilon_{i})$ for an unknown function $f$
defined in condition C9, we consider a kernel function $\mathbf{k}(q)$
in $\mathcal{F}$. This kernel function has an inner product representation
$\mathbf{k}(q,\cdot)=\left\langle \Phi(q),\Phi(\cdot)\right\rangle $.
The function $\Phi$ in the representation approximates any $f\in\mathcal{F}$
such that for any $\epsilon>0$, there exists $\alpha$ as the coefficient
and 
\[
\left\Vert f(q)-\left\langle \alpha,\Phi(q)\right\rangle \right\Vert _{\infty}\leq\epsilon
\]
with $\epsilon$-degree of accuracy. 

Now we propose a new distance $W_{\mathcal{H}}(\theta)$ that
characterizes the distance between two distinct measures $H_{n\theta}(x,\varepsilon)$
and $P_{n\theta}(x,\varepsilon)$ for $\theta\neq\theta_{0}$ in the
optimal sense. The idea is to consider $\mathcal{F}:=\{f\,:\,\|f\|_{l}\leq1\}$
to be embedded in a unit ball $\mathcal{B}_{\mathcal{H}}$ of a Hilbert
space $\mathcal{H}$.%
\footnote{$\mathcal{F}$ is embedded in $\mathcal{B}_{\mathcal{H}}$ means that
there exists a map $\Pi:\mathcal{F}\rightarrow\mathcal{B}_{\mathcal{H}}$
such that the map is injective and preserving topological structure
of $\mathcal{F}$%
} For any function $f\in\mathcal{B}_{\mathcal{H}}$ there exists a
feature map $\Phi:\mathcal{\mathcal{Q}}\rightarrow\mathcal{H}$. Then
$\mathcal{H}$ is the feature space of a kernel $\mathbf{k}$ which represents
all continuous bounded functions in $\mathcal{F}$. In other words,
we construct a Hilbert space so that 
\[
\mathcal{F}:=\{f\,:\,\|f\|_{l}\leq1\}=\{f\,:\,\|f\|_{\mathcal{H}}\leq1\}=\mathcal{B}_{\mathcal{H}}.
\]
By Theorem \ref{Representation-Theorem} we know that a kernel $\mathbf{k}$
induced by the feature map $\Phi:\mathcal{\mathcal{Q}}\rightarrow\mathcal{H}$
can represent $f\in\mathcal{B}_{\mathcal{H}}$ and hence after $f\in\mathcal{F}$.
Then the same kernel will be used for defining the metric for $W(\theta)$.

Here is our construction. First, we construct a pseudo-metric on $\mathcal{B}_{\mathcal{H}}$
by using the kernel $\mathbf{k}$. Kernel $\mathbf{k}$ is induced
by $\mathcal{H}$. Then we will show that the new metric characterizes
the weak topology of $\widehat{W}(\theta)$ and in Section \ref{sub:Biased-and-Unbiased} below
we develop an asymptotically unbiased empirical estimate for $W(\theta)$ based
on this new metric. 
\begin{itemize}
\item[C10.] For any $\theta\in\Theta$, the kernel $\mathbf{k}$ is square-root
integrable w.r.t. both $H_{\theta}(x,\varepsilon)$ and $P_{\theta}(x,\varepsilon)$.
That is 
\[
\int_{\mathcal{Q}}\sqrt{\mathbf{k}(q,q)}dH_{\theta}(q)<\infty\:\mbox{and }\int_{\mathcal{Q}}\sqrt{\mathbf{k}(q,q)}dP_{\theta}(q)<\infty.
\]
and $\mathbf{k}(\cdot,\cdot)\in\mathcal{B}_{\mathcal{H}}$ is measurable
on $\mathcal{Q}\times\mathcal{Q}$. 
\end{itemize}
Let $\mathscr{P}(\mathcal{Q}):=\left\{ \mathcal{P}:\int_{\mathcal{Q}}\sqrt{\mathbf{k}(q,q)}d\mathcal{P}(q)<\infty\right\} $.
Condition C10 implies $\{H_{\theta}(x,\varepsilon),P_{\theta}(x,\varepsilon)\}\subset\mathscr{P}(\mathcal{Q})$.
Note that $\mathscr{P}(\mathcal{Q})$ is a subset of $\mathscr{P}^{1}(\mathcal{Q})$,
the space of Borel probability measures on $\mathcal{Q}$ with the
first moments. C10 restricts the support $\mathscr{P}^{1}(\mathcal{Q})$
in $W(\theta)$. However, it returns flexibility of defining pseudo-metric
for $\mathscr{P}(\mathcal{Q})$ $ $by kernels. \begin{thm} \label{thm:KernelW-distance}Assume
that the simultaneous model $\rho(X,Y,\theta)=\varepsilon$ satisfies
C1-C9. If $H_{\theta}(x,\varepsilon)$ and $P_{\theta}(x,\varepsilon)$
satisfy C10, then 
\begin{equation}
W_{\mathcal{H}}(\theta):=\left\Vert \underset{:=H_{\theta}\mathbf{k}}{\underbrace{\int_{\mathcal{Q}}\mathbf{k}(\cdot,q)dH_{\theta}(q)}}-\underset{:=P_{\theta}\mathbf{k}}{\underbrace{\int_{\mathcal{Q}}\mathbf{k}(\cdot,q)dP_{\theta}(q)}}\right\Vert _{\mathcal{H}},\label{eq:WH_distance}
\end{equation}
where $\mathcal{H}$ induces kernel $\mathbf{k}\in\mathcal{B}_{\mathcal{H}}$.\end{thm}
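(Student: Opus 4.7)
The plan is to show that $W_{\mathcal{H}}(\theta)$ is a well-defined Hilbert-space reformulation of the Kantorovich dual $W(\theta)$ from Theorem \ref{KantorovichDual}, obtained by replacing the sup over Lipschitz functions with the RKHS norm of the difference of the kernel mean embeddings. I would proceed in three steps: (i) verify that $H_{\theta}\mathbf{k}$ and $P_{\theta}\mathbf{k}$ exist as Bochner integrals in $\mathcal{H}$; (ii) use the reproducing property to rewrite $\int f\,d\mathcal{P}$ as an inner product with the embedding; (iii) invoke the Representation Theorem together with Cauchy--Schwarz to identify the two expressions.

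For step (i), the reproducing property gives $\|\mathbf{k}(\cdot,q)\|_{\mathcal{H}}=\sqrt{\mathbf{k}(q,q)}$, so C10 implies $\int \|\mathbf{k}(\cdot,q)\|_{\mathcal{H}}\,dH_{\theta}(q)<\infty$ and similarly for $P_{\theta}$. Since $\mathbf{k}(\cdot,q)$ is measurable in $q$ by the measurability clause in C10, the map $q\mapsto \mathbf{k}(\cdot,q)$ is Bochner integrable, so $H_{\theta}\mathbf{k}$ and $P_{\theta}\mathbf{k}$ are well-defined elements of $\mathcal{H}$. For step (ii), the defining property of the Bochner integral allows one to interchange the inner product with integration: for each $f\in\mathcal{H}$,
\[
\int f(q)\,dH_{\theta}(q)=\int \langle f,\mathbf{k}(\cdot,q)\rangle_{\mathcal{H}}\,dH_{\theta}(q)=\langle f,H_{\theta}\mathbf{k}\rangle_{\mathcal{H}},
\]
and analogously for $P_{\theta}$. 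Subtracting yields $\int f\,d(H_{\theta}-P_{\theta})=\langle f,H_{\theta}\mathbf{k}-P_{\theta}\mathbf{k}\rangle_{\mathcal{H}}$.

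For step (iii), Theorem \ref{KantorovichDual} writes $W(\theta)=\sup_{\|f\|_{l}\le 1}\bigl|\int f\,d(H_{\theta}-P_{\theta})\bigr|$. By the construction in Section \ref{sub:-Distance}, $\mathcal{F}=\{f:\|f\|_{l}\le 1\}$ is embedded in the unit ball $\mathcal{B}_{\mathcal{H}}$, and Theorem \ref{Representation-Theorem} ensures the kernel-induced space represents $C(\mathcal{X})$, so the sup over $\mathcal{F}$ coincides (up to a density argument) with the sup over $\mathcal{B}_{\mathcal{H}}$. Combining with step (ii),
\[
W(\theta)=\sup_{\|f\|_{\mathcal{H}}\le 1}\bigl|\langle f,H_{\theta}\mathbf{k}-P_{\theta}\mathbf{k}\rangle_{\mathcal{H}}\bigr|,
\]
and by Cauchy--Schwarz (with attainment at $f=(H_{\theta}\mathbf{k}-P_{\theta}\mathbf{k})/\|H_{\theta}\mathbf{k}-P_{\theta}\mathbf{k}\|_{\mathcal{H}}$) this equals $\|H_{\theta}\mathbf{k}-P_{\theta}\mathbf{k}\|_{\mathcal{H}}=W_{\mathcal{H}}(\theta)$.

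The main obstacle is step (iii), specifically the identification of the Lipschitz unit ball $\{f:\|f\|_{l}\le 1\}$ with the RKHS unit ball $\mathcal{B}_{\mathcal{H}}$. These two balls are not literally equal, but the Representation Theorem provides uniform approximability of continuous (hence Lipschitz, after truncation and restriction to the compact $\mathcal{Q}$) functions by RKHS elements. The argument therefore requires approximating any Lipschitz $f$ by $g\in\mathcal{B}_{\mathcal{H}}$ in $\|\cdot\|_{\infty}$, controlling the error in $\bigl|\int (f-g)\,d(H_{\theta}-P_{\theta})\bigr|$ by $2\|f-g\|_{\infty}$, and passing to the limit. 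Compactness of $\mathcal{Q}$ (from C2--C4) is essential here since it ensures the approximation is uniform and that the integrals against the finite Borel measures $H_{\theta}$ and $P_{\theta}$ are controlled. Bochner integrability under C10 is the technical prerequisite that makes the interchange in step (ii) legitimate.
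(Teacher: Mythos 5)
Your steps (i) and (ii) are correct and are in substance the same argument the paper gives: the paper constructs the embedding $\lambda_{\mathcal{P}}=\mathcal{P}\mathbf{k}$ via the Riesz representation of the bounded functional $T[h]=\int h\,d\mathcal{P}$ (boundedness following from C10 exactly as in your Bochner-integrability check), and then the reproducing property gives $\int f\,d(H_{\theta}-P_{\theta})=\langle f,H_{\theta}\mathbf{k}-P_{\theta}\mathbf{k}\rangle_{\mathcal{H}}$. Had you then taken the supremum over the RKHS unit ball $\mathcal{B}_{\mathcal{H}}$ and applied Cauchy--Schwarz with the attaining element, you would have reproduced the paper's proof, whose actual content is the identity
\[
\sup_{\|h\|_{\mathcal{H}}\leq1}\left|\int h\,d(H_{\theta}-P_{\theta})\right|=\left\Vert H_{\theta}\mathbf{k}-P_{\theta}\mathbf{k}\right\Vert _{\mathcal{H}}.
\]

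The genuine gap is in your step (iii). You claim that the Kantorovich dual of Theorem \ref{KantorovichDual}, a supremum over the Lipschitz unit ball $\{f:\|f\|_{l}\leq1\}$, coincides with the supremum over $\mathcal{B}_{\mathcal{H}}$, and hence that $W(\theta)=W_{\mathcal{H}}(\theta)$ exactly. This is false in general, and the density argument you sketch cannot rescue it: Theorem \ref{Representation-Theorem} gives uniform approximation of continuous functions by kernel-induced functions, but it gives no control whatsoever on the RKHS norm of the approximant. A uniform $\epsilon$-approximation $g$ of a Lipschitz function typically has $\|g\|_{\mathcal{H}}$ blowing up as $\epsilon\to0$ (e.g.\ for the Gaussian kernel, whose RKHS consists of very smooth functions), so $g/\|g\|_{\mathcal{H}}$ drifts to zero and the two suprema are genuinely different quantities. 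The paper never asserts the equality you are trying to prove; it establishes only the two-sided comparison $W_{\mathcal{H}}(\theta)\leq W(\theta)\leq\sqrt{W_{\mathcal{H}}^{2}(\theta)+2C}$ in Theorem \ref{Thm:EquivalentMetric} (and under the Hilbertian metric $\mathbf{d}_{\mathbf{k}}$, not the original $\mathbf{d}$), which would be vacuous if your step (iii) were correct. So the fix is simply to drop the identification with the Lipschitz ball: Theorem \ref{thm:KernelW-distance} is a well-definedness and closed-form statement for the RKHS-ball integral probability metric, not an equality with the Wasserstein distance.
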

\begin{proof} See Appendix \ref{sub:Proof-of-Theorem WHdistance}.
\end{proof} 

Therefore, the minimum integrated distance estimator proposed in this paper will be defined below as a minimization of problem in equation \eqref{eq:WH_distance}.

As stated in Theorem \ref{Representation-Theorem}, an
algebra $\mathcal{A}:=\mbox{span}\{\Phi_{n}:n\in\mathbb{N}\}$ associating
with a sequence of $(\Phi_{n}(x))_{n\in\mathbb{N}}$ will represent
any $f\in\mathcal{F}\subset C(\mathcal{Q})$. By using the inner product
operation, we have 
\[
f=\left\langle \alpha,\Phi(\cdot)\right\rangle =\left\langle \left\langle \beta,\Phi(\cdot)\right\rangle ,\Phi(\cdot)\right\rangle =\left\langle \alpha',\mathbf{k}(\cdot,\cdot)\right\rangle ,
\]
where $\alpha$, $\beta$, and $\alpha'$ are some coefficients in
$\mathbb{R}$. The bilinear map $\left\langle \alpha',\cdot\right\rangle :\mathcal{F}\rightarrow\mathcal{B}_{\mathcal{H}}$
is injective and preserving the topological structure of $\mathcal{F}$.
Thus we can say $\mathcal{F}$ is embedded into a unit ball $\mathcal{B}_{\mathcal{H}}$
in the Hilbert space. The representation of $W_{\mathcal{H}}(\theta)$
yields a mapping from $\mathscr{P}(\mathcal{Q})$ to a Hilbert space
$\mathcal{H}$: 
\[
\mathcal{P}\mathbf{k}:\:\mathcal{P}\mapsto\int_{\mathcal{Q}}\mathbf{k}(\cdot,q)d\mathcal{P}(q)
\]
where $\mathcal{P}\in\mathscr{P}(\mathcal{Q})$ and $\mathbf{k}\in\mathcal{B}_{\mathcal{H}}$.
In order to verify that $W_{\mathcal{H}}(\theta)$ is a distance function,
we need the following corollary. \begin{cor} \label{cor:Metrization}$W_{\mathcal{H}}(\theta)$
metrizes the weak topology on $\mathscr{P}$.\end{cor}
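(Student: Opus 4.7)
The plan is twofold: first verify that $W_{\mathcal{H}}$ is a genuine metric on $\mathscr{P}(\mathcal{Q})$, then establish that the topology it induces coincides with the weak topology. Symmetry, non-negativity and the triangle inequality follow immediately from $\|\cdot\|_{\mathcal{H}}$ being a Hilbert-space norm; the only non-trivial metric property is identifiability. For this, I would invoke the reproducing property $\langle f,\mathcal{P}\mathbf{k}\rangle_{\mathcal{H}} = \int f\,d\mathcal{P}$ for $f\in\mathcal{H}$, so that $\|\mathcal{P}_1\mathbf{k}-\mathcal{P}_2\mathbf{k}\|_{\mathcal{H}}=0$ forces $\int f\,d(\mathcal{P}_1-\mathcal{P}_2)=0$ for every $f\in\mathcal{H}$. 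By Theorem \ref{Representation-Theorem}, functions spanned by $\mathbf{k}$ are dense in $C(\mathcal{Q})$; combined with compactness of $\mathcal{Q}$ and Riesz representation, $C(\mathcal{Q})$ separates Borel probability measures, so $\mathcal{P}_1=\mathcal{P}_2$.

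For the first direction of metrization, suppose $W_{\mathcal{H}}(\mathcal{P}_n,\mathcal{P})\to 0$. For any $f\in\mathcal{H}$, Cauchy--Schwarz and the reproducing identity give
\[
\Bigl|\textstyle\int f\,d(\mathcal{P}_n-\mathcal{P})\Bigr| \;=\; |\langle f,\mathcal{P}_n\mathbf{k}-\mathcal{P}\mathbf{k}\rangle_{\mathcal{H}}| \;\leq\; \|f\|_{\mathcal{H}}\,W_{\mathcal{H}}(\mathcal{P}_n,\mathcal{P}) \;\to\; 0.
\]
For an arbitrary $g\in C(\mathcal{Q})$ and any $\varepsilon>0$, the density of $\mathcal{H}$ in $C(\mathcal{Q})$ (Theorem \ref{Representation-Theorem}) yields $f\in\mathcal{H}$ with $\|g-f\|_{\infty}<\varepsilon$, so that
\[
\Bigl|\textstyle\int g\,d(\mathcal{P}_n-\mathcal{P})\Bigr| \;\leq\; 2\varepsilon + \Bigl|\textstyle\int f\,d(\mathcal{P}_n-\mathcal{P})\Bigr|.
\]
Taking $n\to\infty$ and then $\varepsilon\to 0$ produces weak convergence $\mathcal{P}_n\rightharpoonup\mathcal{P}$.

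For the converse, I would expand the squared norm
\[
W_{\mathcal{H}}(\mathcal{P}_n,\mathcal{P})^{2} \;=\; \iint \mathbf{k}(q,q')\,d\mathcal{P}_n(q)\,d\mathcal{P}_n(q') \;-\; 2\iint \mathbf{k}(q,q')\,d\mathcal{P}_n(q)\,d\mathcal{P}(q') \;+\; \iint \mathbf{k}(q,q')\,d\mathcal{P}(q)\,d\mathcal{P}(q').
\]
Under C10 and compactness of $\mathcal{Q}$, $\mathbf{k}$ is continuous and bounded on $\mathcal{Q}\times\mathcal{Q}$. Weak convergence $\mathcal{P}_n\rightharpoonup\mathcal{P}$ implies weak convergence of the product measures $\mathcal{P}_n\otimes\mathcal{P}_n$ and $\mathcal{P}_n\otimes\mathcal{P}$ to $\mathcal{P}\otimes\mathcal{P}$; applying the Portmanteau theorem to the continuous bounded integrand $\mathbf{k}$, each of the three double integrals converges to $\iint\mathbf{k}\,d\mathcal{P}\,d\mathcal{P}$, so the sum tends to $0$.

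The main obstacle is the converse direction, specifically justifying weak convergence of the product measures and transferring it to the kernel integrals. This hinges on the boundedness and continuity of $\mathbf{k}$ on the compact product space, which is exactly what C10 together with the compactness assumptions C2--C4 deliver; without compactness of $\mathcal{Q}$ one would need additional uniform integrability on $\mathbf{k}$, and without the density conclusion of Theorem \ref{Representation-Theorem} the identification and the forward direction would both fail.
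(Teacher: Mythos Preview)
Your proof is correct and follows essentially the same route as the paper: the density argument via Theorem \ref{Representation-Theorem} combined with Cauchy--Schwarz for the direction $W_{\mathcal{H}}\to 0 \Rightarrow$ weak convergence matches the paper's $(\Leftarrow)$ step almost line for line. For the converse the paper merely asserts the implication is ``obvious'' from boundedness and continuity of $\mathbf{k}$, whereas you supply the explicit product-measure argument; your separate verification of identifiability is handled elsewhere in the paper (Lemma \ref{lem:Identification}) rather than inside this proof.
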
 \begin{proof}
See Appendix \ref{sub:Proof-of-Corollary Metrization}. \end{proof}
$W_{\mathcal{H}}(\theta)$ measures the dis-similarity between $H_{\theta}$
and $P_{\theta}$, however, so far it is not clear that whether the
new distance $W_{\mathcal{H}}(\theta)$ relates to $W(\theta)$. 
Corollary \ref{cor:Metrization} states that all elements in $\mathscr{P}(\mathcal{Q})$ can be metrized by $W_{\mathcal{H}}(\theta)$. The following condition ensures the regularity of $W_{\mathcal{H}}(\theta)$ for
all $q\in\mathcal{Q}$. With these results, $W_{\mathcal{H}}(\theta)$ is comparable with $W(\theta)$ in the probability metric space as given in the next result.
\begin{itemize}
\item[C11.] Let $\sup_{q\in\mathcal{Q}}\mathbf{k}(q,q)\leq C$ and $\mathbf{d}_{\mathbf{k}}=\|\mathbf{k}(\cdot,q)-\mathbf{k}(\cdot,q')\|_{\mathcal{H}}$.
We assume that the metric space $(\mathcal{Q},\rho)$ is separable. 
\end{itemize}
\begin{thm} \label{Thm:EquivalentMetric}For any $H_{\theta}(x,\varepsilon)$
and $P_{\theta}(x,\varepsilon)$ satisfy C10 and $(\mathcal{Q},\mathbf{d}_{\mathbf{k}})$
satisfies C11, then 
\[
W_{\mathcal{H}}(\theta)\leq W(\theta)\leq\sqrt{W_{\mathcal{H}}^{2}(\theta)+2C}
\]
where the distance function in $W(\theta)$ is $\mathbf{d}_{\mathbf{k}}$.\end{thm}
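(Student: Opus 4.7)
The plan is to prove the two inequalities separately: the lower bound $W_{\mathcal{H}}(\theta) \leq W(\theta)$ by comparing the dual sup--representations of the two distances on nested test--function sets, and the upper bound $W(\theta) \leq \sqrt{W_{\mathcal{H}}^2(\theta) + 2C}$ by evaluating the Kantorovich primal at the independent coupling $H_\theta \otimes P_\theta$ and then expanding the squared kernel metric.

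For the lower bound, I would start from Theorem \ref{KantorovichDual}, which already supplies the dual $W(\theta) = \sup_{\|f\|_{l}\leq 1}|\int f\,d(H_\theta - P_\theta)|$, with the Lipschitz semi--norm measured against $\mathbf{d}_{\mathbf{k}}$. Since $H_\theta \mathbf{k}, P_\theta \mathbf{k}\in\mathcal{H}$ by C10, the definition of the Hilbert--space norm and the reproducing identity $\langle g,\mathbf{k}(\cdot,q)\rangle_{\mathcal{H}}=g(q)$ together with Fubini give
\[
W_{\mathcal{H}}(\theta) \;=\; \sup_{\|g\|_{\mathcal{H}}\leq 1}\Bigl|\int g\,d(H_\theta - P_\theta)\Bigr|.
\]
The crucial set--containment is $\{g:\|g\|_{\mathcal{H}}\leq 1\} \subset \{f:\|f\|_{l}\leq 1\}$: for any unit--norm $g\in\mathcal{H}$, Cauchy--Schwarz in $\mathcal{H}$ yields $|g(q)-g(q')|=|\langle g,\mathbf{k}(\cdot,q)-\mathbf{k}(\cdot,q')\rangle_{\mathcal{H}}|\leq \mathbf{d}_{\mathbf{k}}(q,q')$, so $\|g\|_{l}\leq 1$. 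Taking the supremum of the same linear functional over a larger set gives $W_{\mathcal{H}}(\theta)\leq W(\theta)$.

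For the upper bound, I would abandon the optimal transport plan and instead plug the product coupling $H_\theta \otimes P_\theta \in \Gamma(H_\theta,P_\theta)$ into the primal \eqref{eq:Kantonovich}; this gives the non--sharp but tractable bound $W(\theta) \leq \int \mathbf{d}_{\mathbf{k}}(q,q')\,d(H_\theta\otimes P_\theta)$. Jensen on this probability measure then delivers $W(\theta)^{2} \leq \int \mathbf{d}_{\mathbf{k}}^{2}\,d(H_\theta\otimes P_\theta)$. Expanding $\mathbf{d}_{\mathbf{k}}^{2}(q,q') = \mathbf{k}(q,q) + \mathbf{k}(q',q') - 2\mathbf{k}(q,q')$ and integrating, each diagonal term contributes at most $C$ by C11, while the off--diagonal collapses via Fubini to $-2\langle H_\theta\mathbf{k}, P_\theta\mathbf{k}\rangle_{\mathcal{H}}$. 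Using the expansion $W_{\mathcal{H}}^{2}(\theta) = \|H_\theta\mathbf{k}\|^{2} + \|P_\theta\mathbf{k}\|^{2} - 2\langle H_\theta\mathbf{k}, P_\theta\mathbf{k}\rangle_{\mathcal{H}}$, I would substitute $-2\langle H_\theta\mathbf{k}, P_\theta\mathbf{k}\rangle = W_{\mathcal{H}}^{2}(\theta) - \|H_\theta\mathbf{k}\|^{2} - \|P_\theta\mathbf{k}\|^{2}$ and discard the nonpositive remainder to obtain $W(\theta)^{2} \leq W_{\mathcal{H}}^{2}(\theta) + 2C$, and hence the claim after taking square roots.

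The main obstacle to securing the clean constant $2C$ is precisely the choice of coupling in the primal. The genuine $W_1$--optimal coupling $\gamma^{*}$ leaves a cross--term $\int \mathbf{k}(q,q')\,d\gamma^{*}$ that does not line up with the inner product $\langle H_\theta\mathbf{k}, P_\theta\mathbf{k}\rangle$ appearing in the expansion of $W_{\mathcal{H}}^{2}$, and naive triangle--type bounds on the resulting variance give only $4C$. Recognising that the \emph{independent} coupling $H_\theta\otimes P_\theta$ produces exactly the inner product that appears in $W_{\mathcal{H}}^{2}(\theta)$, so that the two squared norms $\|H_\theta\mathbf{k}\|^{2}$ and $\|P_\theta\mathbf{k}\|^{2}$ can be dropped with the correct sign, is the single observation that turns the argument into a one--line computation.
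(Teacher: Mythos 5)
Your proof is correct and follows essentially the same route as the paper: the upper bound is obtained exactly as in the paper's proof, by evaluating the primal at the product coupling $H_{\theta}\otimes P_{\theta}$, applying Jensen, expanding $\mathbf{d}_{\mathbf{k}}^{2}(q,q')=\mathbf{k}(q,q)+\mathbf{k}(q',q')-2\mathbf{k}(q,q')$, bounding the diagonal terms by $2C$ via C11, and absorbing $-2\langle H_{\theta}\mathbf{k},P_{\theta}\mathbf{k}\rangle_{\mathcal{H}}\leq W_{\mathcal{H}}^{2}(\theta)$ after discarding the two nonnegative squared norms. For the lower bound you pass through the Kantorovich dual and the containment of $\mathcal{B}_{\mathcal{H}}$ in the $\mathbf{d}_{\mathbf{k}}$-Lipschitz unit ball, whereas the paper works in the primal by writing $\int f\,d(H_{\theta}-P_{\theta})=\int(f(q)-f(q'))\,d\gamma$ and then taking suprema and infima; both hinge on the identical Cauchy--Schwarz step $|f(q)-f(q')|\leq\|f\|_{\mathcal{H}}\,\mathbf{d}_{\mathbf{k}}(q,q')$, so this is a cosmetic rather than substantive difference.
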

\begin{proof} See Appendix \ref{sub:Proof-of-Theorem EquivMetric}
\end{proof} 
\begin{rem} The metric $\mathbf{d}_{\mathbf{k}}(\delta_{q},\delta_{q'})=\|\mathbf{k}(\cdot,q)-\mathbf{k}(\cdot,q')\|_{\mathcal{H}}$
used in Theorem \ref{Thm:EquivalentMetric} is called Hilbertian metric.
Given this metric, one can obtain the associated kernel $\mathbf{k}$
via 
\[
\mathbf{k}(q,q')=\frac{1}{2}\left[\mathbf{d}_{\mathbf{k}}^{2}(q,z)+\mathbf{d}_{\mathbf{k}}^{2}(q',z)-\mathbf{d}_{\mathbf{k}}^{2}(q,q')\right]
\]
for any $q,q',z\in\mathcal{Q}$. This is called three points interpolation.
\end{rem}
To conclude this section, note that we use a general Wasserstein criterion \eqref{eq:Kantonovich}
to measure the dis-similarity between these two sequences. The criterion
captures infinite intrinsic connections between $H_{\theta}(x,\varepsilon)$
and $P_{\theta}(x,\varepsilon)$ via a transition measure $\gamma$.
The measure $\gamma$ in Wasserstein criterion \eqref{eq:Kantonovich}
is the key mechanism to provide an optimal criterion function of the
minimum distance problem over two probability measures. By Kantonovich's
dual \eqref{eq:empiricalMMD} and kernel representation result in \eqref{eq:WH_distance},
we embed this infinite dimensional criterion function into a tractable
space. Then the criterion function is represented in terms of a kernel-based
distance. Since the new distance is equivalent to the total variation
distance, the modified estimation problem maintains finer topological
details. 

Given these results we are in position to establish identification, propose the estimator, and derive its asymptotic properties.

\section{Estimation and Inference\label{sec:Inference}}
In this section we first establish identification in the dual problem, then construct the new minimum integrated distance estimator $\theta^{*}$ for $\theta_{0}$ and finally provide the corresponding asymptotic properties, and discuss its optimality. Practical computation of the proposed estimator will be given in Section \ref{sec:Computation}.

\subsection{Identification\label{sub:Identification}}

In order to identify $\theta_{0}$, we need to make sure that when
$H_{\theta}\neq P_{\theta}$, the kernel based distance $W_{\mathcal{H}}(\theta)$
cannot attain zero. The following condition restricts the class of
kernels to be strictly positive definite. 
\begin{itemize}
\item[C12.] \label{Con:PositiveDefinite}The $k$ is strictly positive definite
on $\mathcal{Q}$ such that 
\[
\int_{\mathcal{Q}\times\mathcal{Q}}\mathbf{k}(q,q')d\mathcal{P}(q)d\mathcal{P}(q')>0
\]
for any $\mathcal{P}\in\mathscr{P}$ and $\mathscr{P}(\mathcal{Q}):=\left\{ \mathcal{P}:\int_{\mathcal{Q}}\sqrt{k(q,q)}d\mathcal{P}(q)<\infty\right\} $. \end{itemize}
\begin{example*}
There are many kernels satisfying the integrally strictly positive
definite condition. Here we list few of them: Gaussian kernel $\exp(-\sigma\|q-q'\|_{2}^{2})$
with $\sigma>0$; inverse multi-quadratics $(\sigma^{2}+\|q-q'\|_{2}^{2})^{-c}$
with $c>0$ and $\sigma>0$, Laplacian kernel $\exp(-\sigma\|q-q'\|_{1})$,
with $\sigma>0$. 
\end{example*}
\begin{lem} \label{lem:Identification}Let $k$ be a bounded kernel
on a metric space $(\mathcal{Q},\mathbf{d}_{\mathbf{k}})$. Let $H_{\theta}(x,\varepsilon)$
and $P_{\theta}(x,\varepsilon)$ satisfy C1-C12. Then the following
statement is true that $H_{\theta}(x,\varepsilon)\neq P_{\theta}(x,\varepsilon)$
but $W_{\mathcal{H}}(\theta)\neq0$ if and only if there exists a
finite non-zero signed Borel measure $\mathcal{P}\in\mathscr{P}(\mathcal{Q})$
that satisfies:

(I) $\int_{\mathcal{Q}\times\mathcal{Q}}\mathbf{k}(q,q')d\mathcal{P}(q)d\mathcal{P}(q')=0$.
(II) $\mathcal{P}(\mathcal{Q})=0$.\end{lem}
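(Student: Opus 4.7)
The plan hinges on the single algebraic identity
\[
W_{\mathcal{H}}^{2}(\theta) \;=\; \int_{\mathcal{Q}\times\mathcal{Q}} \mathbf{k}(q,q')\,d(H_{\theta}-P_{\theta})(q)\,d(H_{\theta}-P_{\theta})(q'),
\]
obtained by expanding $\langle H_{\theta}\mathbf{k}-P_{\theta}\mathbf{k},\,H_{\theta}\mathbf{k}-P_{\theta}\mathbf{k}\rangle_{\mathcal{H}}$ and invoking the reproducing property $\langle \mathbf{k}(\cdot,q),\mathbf{k}(\cdot,q')\rangle_{\mathcal{H}}=\mathbf{k}(q,q')$. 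Fubini is available because C10 furnishes $\int\sqrt{\mathbf{k}(q,q)}\,dH_{\theta},\int\sqrt{\mathbf{k}(q,q)}\,dP_{\theta}<\infty$, and Cauchy--Schwarz in $\mathcal{H}$ dominates $|\mathbf{k}(q,q')|\leq\sqrt{\mathbf{k}(q,q)\mathbf{k}(q',q')}$, so the double integral against the signed measure $H_{\theta}-P_{\theta}$ is absolutely convergent. Once this identity is in hand, both halves of the equivalence reduce to a choice of signed measure.

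For the forward direction (reading the hypothesis as $H_{\theta}\neq P_{\theta}$ together with $W_{\mathcal{H}}(\theta)=0$, which is the configuration the lemma needs to rule out to identify $\theta_{0}$), I would simply set $\mathcal{P}:=H_{\theta}-P_{\theta}$. This $\mathcal{P}$ is a finite signed Borel measure, non-zero by $H_{\theta}\neq P_{\theta}$, has $\mathcal{P}(\mathcal{Q})=1-1=0$ which is (II), lies in $\mathscr{P}(\mathcal{Q})$ since each summand does by C10, and the identity above turns $W_{\mathcal{H}}(\theta)=0$ directly into (I).

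For the converse, starting from a non-zero signed $\mathcal{P}\in\mathscr{P}(\mathcal{Q})$ satisfying (I)--(II), I would apply the Jordan--Hahn decomposition to write $\mathcal{P}=\mathcal{P}^{+}-\mathcal{P}^{-}$ with $\mathcal{P}^{+}\perp\mathcal{P}^{-}$; property (II) forces $\mathcal{P}^{+}(\mathcal{Q})=\mathcal{P}^{-}(\mathcal{Q})=:c$, and non-triviality of $\mathcal{P}$ forces $c>0$. Normalising produces distinct probability measures $\tilde{H}:=\mathcal{P}^{+}/c$ and $\tilde{P}:=\mathcal{P}^{-}/c$ in $\mathscr{P}(\mathcal{Q})$, and substituting $\mathcal{P}=c(\tilde{H}-\tilde{P})$ into (I) and running the identity backwards gives $c^{2}\|\tilde{H}\mathbf{k}-\tilde{P}\mathbf{k}\|_{\mathcal{H}}^{2}=0$, i.e.\ a zero $W_{\mathcal{H}}$-distance between two unequal probability measures.

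The step I expect to be the main obstacle is not the algebra but the interpretation: the converse produces some pair $(\tilde{H},\tilde{P})$ witnessing failure of injectivity of the embedding $\mathcal{P}\mapsto\int\mathbf{k}(\cdot,q)\,d\mathcal{P}(q)$, not necessarily the specific pair $(H_{\theta},P_{\theta})$ arising from the model. The cleanest way to close this gap is to read the lemma as a characterisation of when the kernel mean embedding fails to be injective on $\mathscr{P}(\mathcal{Q})$. The role of C12 then appears at the downstream step, where one would invoke it (either directly, or by verifying the stronger characteristic-kernel property for the examples cited such as the Gaussian and Laplacian) to conclude that no such signed $\mathcal{P}$ exists, so that $W_{\mathcal{H}}(\theta)=0\Leftrightarrow H_{\theta}=P_{\theta}$, which is exactly what identification of $\theta_{0}$ in the dual problem requires.
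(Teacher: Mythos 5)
Your proposal is correct and follows essentially the same route as the paper's proof: the same expansion of $W_{\mathcal{H}}^{2}(\theta)$ into the double integral against $H_{\theta}-P_{\theta}$, the choice $\mathcal{P}=\pm(H_{\theta}-P_{\theta})$ for one direction, and the Jordan decomposition with normalization $H_{\theta}=\alpha^{-1}\mathcal{P}^{+}$, $P_{\theta}=\alpha^{-1}\mathcal{P}^{-}$ for the other. You also correctly read the hypothesis as ``$H_{\theta}\neq P_{\theta}$ with $W_{\mathcal{H}}(\theta)=0$'' (the statement's ``$\neq0$'' is a typo), which is exactly how the paper's own proof treats it, and your remark that C12 enters only downstream to rule out such a $\mathcal{P}$ matches the paper's subsequent theorem.
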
 \begin{proof} See Appendix
\ref{sub:Proof-of-Lemma Iden}.\end{proof} \begin{thm} For any $H_{\theta}(x,\varepsilon)$
and $P_{\theta}(x,\varepsilon)$ satisfy C10 and kernel $\mathbf{k}$
satisfies C11, if $H_{\theta}(x,\varepsilon)\neq P_{\theta}(x,\varepsilon)$,
then $W_{\mathcal{H}}(\theta)$ does not attain zero.\end{thm}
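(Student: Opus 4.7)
The plan is to reduce the theorem to Lemma \ref{lem:Identification} and then invoke the strict positive definiteness of $\mathbf{k}$ from C12 to rule out the obstruction the lemma characterizes. I would argue by contradiction: assume $H_{\theta}(x,\varepsilon)\neq P_{\theta}(x,\varepsilon)$ yet $W_{\mathcal{H}}(\theta)=0$, and then exhibit a finite non-zero signed Borel measure that violates C12.

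First, I would construct the candidate signed measure $\mathcal{P}:=H_{\theta}-P_{\theta}$. Since $H_{\theta}\neq P_{\theta}$, the measure $\mathcal{P}$ is non-zero; since both are probability measures, $\mathcal{P}(\mathcal{Q})=0$, which is property (II) of Lemma \ref{lem:Identification}. Condition C10 applied to both $H_{\theta}$ and $P_{\theta}$, together with the Jordan decomposition $\mathcal{P}=\mathcal{P}_{+}-\mathcal{P}_{-}$, yields $\int_{\mathcal{Q}}\sqrt{\mathbf{k}(q,q)}\,d|\mathcal{P}|(q)<\infty$, so $\mathcal{P}$ sits in the class $\mathscr{P}(\mathcal{Q})$ on which C12 is stated.

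Second, I would translate the assumption $W_{\mathcal{H}}(\theta)=0$ into a vanishing double integral. Using the definition in \eqref{eq:WH_distance} and the reproducing identity $\langle \mathbf{k}(\cdot,q),\mathbf{k}(\cdot,q')\rangle_{\mathcal{H}}=\mathbf{k}(q,q')$, together with Fubini (justified by C10 and C11), I obtain
\[
0=W_{\mathcal{H}}(\theta)^{2}=\left\langle \int_{\mathcal{Q}}\mathbf{k}(\cdot,q)\,d\mathcal{P}(q),\int_{\mathcal{Q}}\mathbf{k}(\cdot,q')\,d\mathcal{P}(q')\right\rangle_{\mathcal{H}}=\int_{\mathcal{Q}\times\mathcal{Q}}\mathbf{k}(q,q')\,d\mathcal{P}(q)\,d\mathcal{P}(q'),
\]
which is exactly condition (I) of Lemma \ref{lem:Identification}. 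Thus Lemma \ref{lem:Identification} produces the very object—a non-zero signed measure in $\mathscr{P}(\mathcal{Q})$ satisfying (I) and (II)—whose existence encodes failure of identification.

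Finally, I would close the argument by appealing to C12. The strict positive definiteness of $\mathbf{k}$ asserts that the double integral in (I) is strictly positive for every non-zero $\mathcal{P}\in\mathscr{P}(\mathcal{Q})$; the displayed equation above then forces $\mathcal{P}=0$, contradicting $H_{\theta}\neq P_{\theta}$. Hence $W_{\mathcal{H}}(\theta)>0$, as claimed. The one place that requires care—and what I view as the main, though still routine, obstacle—is extending C12 from probability measures to the signed measure $\mathcal{P}=H_{\theta}-P_{\theta}$: writing $\mathcal{P}=c(\widetilde{\mathcal{P}}_{+}-\widetilde{\mathcal{P}}_{-})$ for normalized probability measures $\widetilde{\mathcal{P}}_{\pm}$ (possible after rescaling by the total variation), the vanishing of the bilinear form gives $\widetilde{\mathcal{P}}_{+}\mathbf{k}=\widetilde{\mathcal{P}}_{-}\mathbf{k}$ in $\mathcal{H}$, and the characteristic property of $\mathbf{k}$ implied by C12 forces $\widetilde{\mathcal{P}}_{+}=\widetilde{\mathcal{P}}_{-}$, i.e. $\mathcal{P}=0$. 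This last step is the only non-cosmetic piece and mirrors the argument typically invoked for characteristic kernels in the RKHS embedding literature.
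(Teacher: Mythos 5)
Your proposal is correct and follows essentially the same route as the paper: the paper's (very terse) proof also combines Lemma \ref{lem:Identification} with the strict positive definiteness in C12 to rule out a non-zero signed measure satisfying conditions (I) and (II), exactly as you do by taking $\mathcal{P}=H_{\theta}-P_{\theta}$. You merely make explicit the computation $W_{\mathcal{H}}^{2}(\theta)=\int_{\mathcal{Q}\times\mathcal{Q}}\mathbf{k}(q,q')\,d\mathcal{P}(q)\,d\mathcal{P}(q')$ (which the paper carries out inside the proof of Lemma \ref{lem:Identification}) and the harmless extension of C12 to signed measures, which the paper's proof simply asserts.
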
 \begin{proof}
Condition C12 implies that for any non-zero signed Borel measure $\mathcal{P}$,
(I) in Lemma \ref{lem:Identification} is violated. Therefore, if
$H_{\theta}(x,\varepsilon)\neq P_{\theta}(x,\varepsilon)$, then $W_{\mathcal{H}}(\theta)$
must be non-zero.\end{proof} \begin{rem} In fact, both $W_{\mathcal{H}}(\theta)$
and $W(\theta)$ are equivalent to the total variation distance 
\[
\sup_{q\in\mathcal{Q}}\left|H_{\theta}(q)-P_{\theta}(q)\right|
\]
if the distance function $\mathbf{d}(\cdot,\cdot)$ in the primal
problem \eqref{eq:Kantonovich} of $W(\theta)$ is chosen to be a
Hamming metric $H(x,y)=\mathbf{1}_{x\neq y}$. This result is given
in \citet[Proposition 1.3,][]{GozlanLeonard2010}. \end{rem}

\subsection{Convergence of $P_{n \theta }$ and $H_{n \theta}$}

In this paper, we only consider pre-compact space $(\mathscr{P},\mathbf{d})$ over Hilbert spaces (complete spaces) which means that the space $(\mathscr{P},\mathbf{d})$ is a \emph{compact metric space}. In other words, the covering numbers of $\mathscr{P}$ are finite and distance $\mathbf{d}(\cdot,\cdot)$ metrizes $\mathscr{P}$ by a Hilbertian type metric. The covering number measures the size of the function set $\mathscr{P}$. Notice that for a finite covering number, the Glivenko-Cantelli theorem (uniform law of large number) always holds. Thus, sample counterparts of $P_{\theta }, H_{\theta}$  converge. The rigorous probability theory argument is given in the Appendix \ref{sub:Prob-Theo-Preliminary}.

\subsection{$\sqrt{n}$-Biased and Unbiased Estimates\label{sub:Biased-and-Unbiased}}

Given the identification of the dual problem stated in the previous section, we are able to estimate the parameters of interest. 
The minimum integrated distance estimator, $\theta^{*}$, is defined as follows
\begin{align} \label{eq:WH_distance_sample}
\theta^{*} & =\arg\min_{\theta\in\Theta}\widehat{W}_{\mathcal{H}}(\theta):=\arg\min_{\theta\in\Theta} \left\Vert \int \mathbf{k}((x,\varepsilon),q)dH_{n\theta}(q)- \int \mathbf{k}((x,\varepsilon),q')dP_{n\theta}(q')\right\Vert _{\mathcal{H}},
\end{align}
where $\mathbf{k}((x,\varepsilon),\cdot) = \left\langle \Phi(x,\varepsilon),\Phi(\cdot)\right\rangle $ is the kernel function on a feature space, and $\Phi(\cdot)$ is the feature map such that the representation $f= \left\langle \alpha,\Phi(\cdot)\right\rangle$ is alway valid for any $f\in\mathcal{F}$. This criterion is a restatement of the minimization problem in \eqref{eq:WH_distance}.

The estimator of the empirical counterpart of $W_{\mathcal{H}}(\theta)$, $\widehat{W}_{\mathcal{H}}(\theta)$, defined in \eqref{eq:WH_distance_sample}, has an asymptotic bias.
This is because $W_{\mathcal{H}}(\theta)$ is an equivalent metric as total variation distance, as pointed out
in \citet{DevroyeGyorfi1990}, not all distributions have convergent
results for empirical probability measure with respect to the total
variation distance. The consequence of using $\widehat{W}_{\mathcal{H}}(\theta)$-distance
is to obtain an $\sqrt{n}$ or $\sqrt{m}$-inconsistent estimator.
This could bring serious issues for hypothesis testing or some
other weakly convergent outcomes. 

In this subsection, we first consider the regular consistency result
of $\widehat{W}_{\mathcal{H}}(\theta)$. It serves a baseline criterion
for the following analyses. The consistency result also shows that
$\widehat{W}_{\mathcal{H}}(\theta)$ is not $\sqrt{n}$ or $\sqrt{m}$-consistent.
Then, we propose an asymptotically unbiased criterion by simply taking the square
of $\widehat{W}_{\mathcal{H}}(\theta)$. We show that the $\sqrt{n}$
or $\sqrt{m}$-bias terms will be eliminated.

The following theorem establishes that $\widehat{W}_{\mathcal{H}}(\theta)$
is consistent for $W_{\mathcal{H}}(\theta)$ as $n$ or $m$ diverges
to infinity. But it is not $\sqrt{n}$ or $\sqrt{m}$-consistent.
\begin{thm} \label{Thm:Bias}Given Conditions C1-C12, we can assume
$0\leq\mathbf{k}(q,q')\leq C_{\mathbf{k}}$ for any $q$ and $q'$,
then 
\[
\Pr\left\{ \widehat{W}_{\mathcal{H}}(\theta)-W_{\mathcal{H}}(\theta)>2(\sqrt{C_{\mathbf{k}}/m}+\sqrt{C_{\mathbf{k}}/n})+\epsilon\right\} \leq2\exp\left(\frac{-\epsilon^{2}mn}{2C_{\mathbf{k}}(m+n)}\right)
\]
where $\epsilon$ is an arbitrarily small number.\end{thm}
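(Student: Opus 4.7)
The plan is to decompose $\widehat{W}_{\mathcal{H}}(\theta) - W_{\mathcal{H}}(\theta)$ into a concentration piece around $\mathbb{E}\widehat{W}_{\mathcal{H}}(\theta)$ and a deterministic bias piece, and control each separately. Using the kernel inner-product representation $\mathbf{k}(\cdot,q) = \langle \Phi(q), \Phi(\cdot)\rangle_{\mathcal{H}}$ from Theorem \ref{Representation-Theorem}, one has $H_{n\theta}\mathbf{k} = n^{-1}\sum_{i=1}^{n}\Phi(q_i)$ and $P_{m\theta}\mathbf{k} = m^{-1}\sum_{j=1}^{m}\Phi(q_j')$, so $\widehat{W}_{\mathcal{H}}(\theta)$ is a function of the $n+m$ independent samples $(q_1,\dots,q_n,q_1',\dots,q_m')$. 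The concentration piece will be handled by McDiarmid's bounded-differences inequality; the bias piece by a triangle inequality in $\mathcal{H}$ combined with a variance/symmetrization computation.

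For the McDiarmid step, I would verify the bounded-differences constants: since $\|\Phi(q)\|_{\mathcal{H}}^2 = \mathbf{k}(q,q) \le C_{\mathbf{k}}$, replacing a single $q_i$ by $\tilde q_i$ perturbs $H_{n\theta}\mathbf{k}$ by $n^{-1}(\Phi(\tilde q_i) - \Phi(q_i))$, whose $\mathcal{H}$-norm is at most $2\sqrt{C_{\mathbf{k}}}/n$; the reverse triangle inequality then gives the same bound for the change in $\widehat{W}_{\mathcal{H}}(\theta)$, and analogously $2\sqrt{C_{\mathbf{k}}}/m$ for perturbations of a $q_j'$. McDiarmid's two-sided inequality then yields
\begin{equation*}
\Pr\{|\widehat{W}_{\mathcal{H}}(\theta) - \mathbb{E}\widehat{W}_{\mathcal{H}}(\theta)| > \epsilon\} \le 2\exp\!\left(\frac{-2\epsilon^2}{4C_{\mathbf{k}}/n + 4C_{\mathbf{k}}/m}\right) = 2\exp\!\left(\frac{-\epsilon^2 mn}{2C_{\mathbf{k}}(m+n)}\right),
\end{equation*}
which already reproduces the exponential factor in the statement.

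For the bias term, apply the triangle inequality in $\mathcal{H}$,
\begin{equation*}
\widehat{W}_{\mathcal{H}}(\theta) \le \|H_{n\theta}\mathbf{k} - H_{\theta}\mathbf{k}\|_{\mathcal{H}} + W_{\mathcal{H}}(\theta) + \|P_{\theta}\mathbf{k} - P_{m\theta}\mathbf{k}\|_{\mathcal{H}},
\end{equation*}
take expectations, and bound each empirical-process term by symmetrization followed by Jensen: with i.i.d. Rademacher signs $\sigma_i$,
\begin{equation*}
\mathbb{E}\|H_{n\theta}\mathbf{k} - H_{\theta}\mathbf{k}\|_{\mathcal{H}} \le 2\,\mathbb{E}\bigl\|n^{-1}\textstyle\sum_i \sigma_i \Phi(q_i)\bigr\|_{\mathcal{H}} \le 2\bigl(n^{-2}\textstyle\sum_i \mathbb{E}\|\Phi(q_i)\|_{\mathcal{H}}^{2}\bigr)^{1/2} \le 2\sqrt{C_{\mathbf{k}}/n},
\end{equation*}
and similarly the $P_\theta$-term is at most $2\sqrt{C_{\mathbf{k}}/m}$. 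This gives $\mathbb{E}\widehat{W}_{\mathcal{H}}(\theta) - W_{\mathcal{H}}(\theta) \le 2(\sqrt{C_{\mathbf{k}}/n} + \sqrt{C_{\mathbf{k}}/m})$. The event $\{\widehat{W}_{\mathcal{H}}(\theta) - W_{\mathcal{H}}(\theta) > 2(\sqrt{C_{\mathbf{k}}/m} + \sqrt{C_{\mathbf{k}}/n}) + \epsilon\}$ is therefore contained in $\{\widehat{W}_{\mathcal{H}}(\theta) - \mathbb{E}\widehat{W}_{\mathcal{H}}(\theta) > \epsilon\}$, and the McDiarmid bound finishes the argument.

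The main obstacle is the bookkeeping in the bounded-differences step, in particular translating the sup-bound $\mathbf{k}(q,q') \le C_{\mathbf{k}}$ into the perturbation bounds $2\sqrt{C_{\mathbf{k}}}/n$ and $2\sqrt{C_{\mathbf{k}}}/m$ on $\widehat{W}_{\mathcal{H}}(\theta)$ and checking that they aggregate to the exact denominator $2C_{\mathbf{k}}(m+n)/(mn)$ in the McDiarmid exponent. Once this is in place, the rest is a routine combination of a bounded-differences concentration bound with a symmetrization-based control of the mean.
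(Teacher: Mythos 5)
Your proof is correct and follows essentially the same route as the paper's: McDiarmid's bounded-differences inequality with perturbation constants $2\sqrt{C_{\mathbf{k}}}/n$ and $2\sqrt{C_{\mathbf{k}}}/m$ (aggregating to the exponent $\epsilon^{2}mn/(2C_{\mathbf{k}}(m+n))$), combined with a symmetrization bound of $2(\sqrt{C_{\mathbf{k}}/m}+\sqrt{C_{\mathbf{k}}/n})$ on the expectation. The only difference is organizational: the paper concentrates the sup-deviation $\Delta(H_{\theta},P_{\theta},\mathbf{q},\mathbf{q}')$ around its mean and bounds that mean via a ghost-sample/Rademacher-average argument, whereas you concentrate $\widehat{W}_{\mathcal{H}}(\theta)$ itself and control the bias $\mathbb{E}\widehat{W}_{\mathcal{H}}(\theta)-W_{\mathcal{H}}(\theta)$ by the triangle inequality in $\mathcal{H}$ plus Jensen, which is an equivalent and slightly cleaner bookkeeping.
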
 \begin{proof}
See Appendix \ref{sub:Proof-of-Theorem Bias}. 
\end{proof} 
Theorem \ref{Thm:Bias} shows that $\widehat{W}_{\mathcal{H}}(\theta)$ is
not $\sqrt{n}$ or $\sqrt{m}$-consistent for $W_{\mathcal{H}}(\theta)$.
The statistics $n^{\frac{1}{2}}\left\{ \widehat{W}_{\mathcal{H}}(\theta)-W_{\mathcal{H}}(\theta)\right\} $
or $m^{\frac{1}{2}}\left\{ \widehat{W}_{\mathcal{H}}(\theta)-W_{\mathcal{H}}(\theta)\right\} $
has a bias term $C_{\mathbf{k}}$ that does not vanish even if $\epsilon\rightarrow0$
and $n,m\rightarrow\infty$. It also provides an insight that the
bias $(C_{\mathbf{k}}/m)^{\frac{1}{2}}+(C_{\mathbf{k}}/n)^{\frac{1}{2}}$
is related to the kernel size.

Now we propose the square of $W_{\mathcal{H}}(\theta)$-distance as
the new criterion distance and will show that this distance leads
to an asymptotically unbiased estimate. As the proof in Lemma \ref{lem:Identification},
the square of $W_{\mathcal{H}}(\theta)$-distance is: 
\begin{align*}
S_{\mathcal{H}}(\theta):=[W_{\mathcal{H}}(\theta)]^{2}= & \left\Vert H_{\theta}\mathbf{k}-P_{\theta}\mathbf{k}\right\Vert _{\mathcal{H}}^{2}=\left\langle H_{\theta}\mathbf{k}-P_{\theta}\mathbf{k},\, H_{\theta}\mathbf{k}-P_{\theta}\mathbf{k}\right\rangle \\
= & \int_{\mathcal{Q}\times\mathcal{Q}}\mathbf{k}(q,q')d(H_{\theta}-P_{\theta})(q)d(H_{\theta}-P_{\theta})(q').
\end{align*}
Given the norm $\left\Vert H_{\theta}\mathbf{k}-P_{\theta}\mathbf{k}\right\Vert _{\mathcal{H}}^{2}$,
$S_{\mathcal{H}}(\theta)$ may be easily computed in terms of kernel functions. This leads
to our empirical estimate of the $S_{\mathcal{H}}(\theta)$: 
\begin{equation}
\widehat{S}_{\mathcal{H}}(\theta):=\frac{1}{m(m-1)}\sum_{i\neq j}^{m}h(q_{i},q_{j}),\label{eq:Square-Empirical-Distance}
\end{equation}
when $m=n$ in \eqref{eq:empiricalMMD} and $h(\cdot,\cdot)$ is a
$U$-statistic such that $h(q_{i},q_{j}):=\mathbf{k}(x_{i},x_{j})+\mathbf{k}(\varepsilon_{i},\varepsilon_{j})-\mathbf{k}(x_{i},\varepsilon_{j})-\mathbf{k}(x_{j},\varepsilon_{i})$.
In general, if $m\neq n$ in \eqref{eq:empiricalMMD}, then we have
\begin{equation}
\widehat{S}_{\mathcal{H}}(\theta):=\left[\frac{1}{m^{2}}\sum_{i\neq j}^{m}\mathbf{k}(x_{i},x_{j})-\frac{2}{mn}\sum_{i,j=1}^{m,n}\mathbf{k}(x_{i},\varepsilon_{j})+\frac{1}{n^{2}}\sum_{i\neq j}^{n}\mathbf{k}(\varepsilon_{i},\varepsilon_{j})\right],\label{eq:Square-Empirical-Distance-General}
\end{equation}
The consistent statistical distance base on the following theorem,
which is a straightforward application of the large deviation bound
on $U$-statistics of the Hoeffding bound. The next theorem shows consistency of the estimator $\theta^{*}$.
\begin{thm} \label{thm:Unbias}Given C1-C12, we can assume $0\leq\mathbf{k}(q,q')\leq C_{\mathbf{k}}$,
from which it follows $-2C_{\mathbf{k}}\leq h(q,q')\leq2C_{\mathbf{k}}$
for any $q$ and $q'$. Define $\theta^{*}$ as 
\[
\widehat{h}(\theta^{*}|\Theta)=\inf_{\theta\in\Theta}\frac{1}{n^{2}}\sum_{i\neq j}^{n}h(q_{i},q_{j}),\;\mbox{s.t.}\: q\sim H_{n\theta},q'\sim P_{n\theta}.
\]
Assume that for each $\theta\neq\theta_{0}$, there is an open set
$B_{\theta}$ such that $\theta\in B_{\theta}$ and $h(\theta|B_{\theta})>0$
\[
h(\theta^{*}|B_{\theta})=\inf_{\theta\in B_{\theta}}\int h(q,q')dH_{\theta}(q)dP_{\theta}(q'),\;\mbox{s.t.}\: q\sim H_{\theta},q'\sim P_{\theta}.
\]
If $\Theta$ is not compact, assume further that there exists compact
$\mathcal{K}\subseteq\Theta$ such that $\theta_{0}\in\mathcal{K}$
and $h(\theta|\mathcal{K}^{c})>0$. Then, 
\begin{equation}
\Pr\bigl\{\limsup_{n\to\infty}|\theta^{*}-\theta_{0}|\geq\epsilon\bigr\}=0.\label{eq:mle.conv}
\end{equation}
\end{thm}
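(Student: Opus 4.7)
The plan is to prove almost-sure convergence of the argmin $\theta^{*}$ to $\theta_{0}$ by the classical two-step strategy for M-estimator consistency, adapted here to a $U$-statistic criterion. Denote the population counterpart of $\widehat{h}$ by
\[
h(\theta):= \int h(q,q')\, dH_{\theta}(q)\, dP_{\theta}(q').
\]
By Lemma \ref{lem:Identification} together with identification (C6) and strict positive-definiteness (C12), we have $h(\theta_{0})=0$ and $h(\theta)>0$ for every $\theta\neq\theta_{0}$; moreover, the hypothesis $h(\theta|B_{\theta})>0$ upgrades this to a \emph{local separation} statement: on every open neighborhood $B_{\theta}$ of any $\theta\neq\theta_{0}$, the population criterion stays uniformly strictly positive.

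The argument then proceeds in three steps. First I would handle \emph{localization to a compact set}. Using $h(\theta|\mathcal{K}^{c})>0$ and Hoeffding's inequality for $U$-statistics (which applies since $|h(q,q')|\leq 2C_{\mathbf{k}}$), one shows that $\inf_{\theta\in\mathcal{K}^{c}}\widehat{h}(\theta)$ stays bounded below by a positive constant with probability tending to one at an exponential rate, while $\widehat{h}(\theta_{0})\to 0$ almost surely by Borel--Cantelli. Consequently the minimizer $\theta^{*}$ eventually lies in $\mathcal{K}$ a.s., and I may restrict attention to this compact set. Next, I establish \emph{uniform convergence on $\mathcal{K}$}: for each fixed $\theta$, Hoeffding's inequality for second-order $U$-statistics yields
\[
\Pr\bigl\{\bigl|\widehat{h}(\theta) - h(\theta)\bigr| > t\bigr\}\leq 2\exp\bigl(-c\, n t^{2}/C_{\mathbf{k}}^{2}\bigr)
\]
for some absolute constant $c>0$. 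To upgrade to $\sup_{\theta\in\mathcal{K}}|\widehat{h}(\theta)-h(\theta)|\to 0$ a.s., I combine a finite cover of $\mathcal{K}$ (available by compactness and the finite covering numbers of the kernel-based class recalled in the preceding subsection) with an equicontinuity bound on $\theta\mapsto h(q,q';\theta)$ inherited from continuity of $\rho(X,Y,\cdot)$ (C5, C7) and of $\mathbf{k}$. Finally I \emph{conclude by contradiction}: compactness of $\{\theta\in\mathcal{K}:|\theta-\theta_{0}|\geq\epsilon\}$ together with the local-separation condition produces $\eta := \inf\{h(\theta):\theta\in\mathcal{K},\,|\theta-\theta_{0}|\geq\epsilon\}>0$; on the high-probability event $\sup_{\mathcal{K}}|\widehat{h}-h|<\eta/3$ one has $\widehat{h}(\theta_{0})\leq \eta/3 < 2\eta/3 \leq \widehat{h}(\theta)$ for every $\theta$ with $|\theta-\theta_{0}|\geq\epsilon$, contradicting optimality of $\theta^{*}$. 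A final application of Borel--Cantelli delivers $\Pr\{\limsup_{n}|\theta^{*}-\theta_{0}|\geq\epsilon\}=0$.

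The main obstacle will be the uniform law of large numbers for the $U$-statistic process $\{\widehat{h}(\theta):\theta\in\mathcal{K}\}$. Unlike an empirical mean of i.i.d.\ variables, a $U$-statistic involves correlated pairs, and the indexing family is built from the potentially complicated map $\theta\mapsto\rho(X,Y,\theta)$. The standard device is Hoeffding's decomposition, which expresses $\widehat{h}(\theta) - h(\theta)$ as a linear i.i.d.\ empirical-process term plus a degenerate second-order remainder; one then verifies stochastic equicontinuity of the linear part (e.g.\ by a chaining argument using covering numbers of the kernel-based class $\{h(\cdot,\cdot;\theta):\theta\in\mathcal{K}\}$) and shows the degenerate remainder is uniformly $o_{p}(1)$. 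Boundedness of $\mathbf{k}$ and compactness of $\mathcal{K}$ make these steps tractable, but they require careful bookkeeping to keep the exponential tail bounds summable in $n$, which is what is needed for the a.s.\ rather than in-probability conclusion.
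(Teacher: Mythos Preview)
Your proposal is correct, but it takes a heavier route than the paper's. The paper does \emph{not} establish a uniform law of large numbers for the $U$-statistic process $\{\widehat{h}(\theta):\theta\in\mathcal{K}\}$. Instead it runs a Wald-type argument that exploits the hypothesis $h(\theta\mid B_{\theta})>0$ directly: since $\mathcal{K}\setminus B_{\theta_{0}}$ is compact and $\{B_{\theta}\}$ is an open cover, a finite subcover $\Theta_{1},\ldots,\Theta_{m}$ (together with $\mathcal{K}^{c}$) is extracted, each with $c_{s}:=h(\theta\mid\Theta_{s})>0$. For each of these finitely many sets one only needs the \emph{pointwise} Hoeffding bound for bounded $U$-statistics (applied here via Theorem~\ref{Thm:Bias}) plus Borel--Cantelli to get almost-sure convergence; combined with $\widehat{h}(\theta_{0})\to 0$ a.s., a chain of set inclusions forces $\theta^{*}\in B_{\theta_{0}}$ eventually on a probability-one event.

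The practical difference is this: you treat the assumption $h(\theta\mid B_{\theta})>0$ merely as a way to produce the global separation constant $\eta$ at the end, and you pay for a full ULLN (Hoeffding decomposition, chaining, control of the degenerate remainder) to bridge $\widehat{h}$ and $h$ uniformly. The paper instead uses that same assumption as the \emph{engine} of the proof, reducing everything to finitely many one-dimensional convergence statements and bypassing the ULLN altogether. Your approach yields a stronger intermediate conclusion (uniform convergence of the criterion), which can be useful for rates or for the asymptotic distribution later, but for the bare consistency statement the paper's Wald-style reduction is shorter and uses the stated hypotheses more economically.
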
 \begin{proof} See Appendix \ref{sub:Proof-of-Theorem-Unbias}.
\end{proof}

\subsection{Asymptotic Distribution of the Unbiased Statistic\label{sub:Asymptotic-Distribution}}

The $\sqrt{n}$-inconsistency of the baseline criterion $\widehat{W}_{\mathcal{H}}(\theta)$
motivates us to consider another statistic, $\widehat{S}_{\mathcal{H}}(\theta)$.
Root-$n$ asymptotic bias is eliminated by taking a quadratic form of $\widehat{W}_{\mathcal{H}}(\theta)$.
Then, the asymptotic unbiased estimator is valid for hypothesis testing since
it induces an unbiased statistic. This asymptotic unbiased statistic has a standard
Gaussian limiting distribution with root-$n$ rate. It is a nonparametric
test statistic and therefore its asymptotic properties may fit a more
general framework beyond the current setting. 

The following two results derive the limiting distribution of $\widehat{S}_{\mathcal{H}}(\theta^{*})$, where $\theta^{*}$ is given in \eqref{eq:WH_distance_sample}.

\begin{thm} \label{thm:WeakConvergence} Under conditions of Theorem \ref{thm:Unbias}, the statistic $\widehat{S}_{\mathcal{H}}(\theta^{*})$
converges in distribution to
\[
n\widehat{S}_{\mathcal{H}}(\theta^{*})\rightsquigarrow\sum_{l=1}^{\infty}\lambda_{l}\left[z_{l}^{2}-1\right]
\]
where $z_{l}\rightsquigarrow\mathcal{N}(0,1)$ and $z_{l}$ are i.i.d.,
$\lambda_{l}=(\lambda_{l}^{(1)}+\lambda_{l}^{(2)})$ are the solutions
to the eigenvalue equations 
\begin{align*}
\int\tilde{\mathbf{k}}(x,\cdot)\psi_{l}(x)dF_{\theta_{0}}(x) & =\lambda_{l}^{(1)}\psi_{l}(\cdot)\\
\int\tilde{\mathbf{k}}(\varepsilon,\cdot)\psi_{l}(\varepsilon)dG_{\theta_{0}}(\varepsilon) & =\lambda_{l}^{(2)}\psi_{l}(\cdot)
\end{align*}
where $\psi_{l}(\cdot)$ is the eigenfunction and 
\[
\tilde{\mathbf{k}}(x_{i},x_{j}):=\mathbf{k}(x_{i},x_{j})-\mathbb{E}_{x}\mathbf{k}(x_{i},x)-\mathbb{E}_{x}\mathbf{k}(x,x_{j})+\mathbb{E}_{x,x'}\mathbf{k}(x,x')
\]
is the centered kernel. In particular, $h(q_{i},q_{j})$ in \eqref{eq:Square-Empirical-Distance}
is $\tilde{\mathbf{k}}(x_{i},x_{j})+\tilde{\mathbf{k}}(\varepsilon_{i},\varepsilon_{j})-\tilde{\mathbf{k}}(x_{i},\varepsilon_{j})-\tilde{\mathbf{k}}(\varepsilon_{i},x_{j})$.
\end{thm}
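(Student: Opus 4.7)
The plan is to recognize $n\widehat{S}_{\mathcal{H}}(\theta_0)$ as a degenerate two-sample $U$-statistic, apply the classical Mercer/spectral CLT for such statistics, and then transfer the limit from $\theta_0$ to the plug-in estimator $\theta^{*}$ using its consistency (Theorem \ref{thm:Unbias}) together with stochastic equicontinuity of the process $\theta\mapsto n\widehat{S}_{\mathcal{H}}(\theta)$ near $\theta_0$.

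First I would establish degeneracy at the truth. Condition C6 gives $H_{\theta_0}(x,\varepsilon)=F(x)G_{\theta_0}(\varepsilon)=P_{\theta_0}(x,\varepsilon)$, so $S_{\mathcal{H}}(\theta_0)=0$ and hence $\mathbb{E}[h(q_1,q_2)]=0$. Crucially, the product structure at $\theta_0$ also forces the first-order Hoeffding projection
\[
h_1(q)\;:=\;\mathbb{E}_{Q_2}[h(q,Q_2)]\;=\;0 \quad\text{for $H_{\theta_0}$-a.e. }q,
\]
which is the source of the $n^{-1}$ (rather than $n^{-1/2}$) rate. Next, using compactness of $\mathcal{Q}$ (C2, C3), boundedness and measurability of $\mathbf{k}$ (C10, C11), and strict positive definiteness (C12), the integral operators with centered kernels $\tilde{\mathbf{k}}$ on $\mathcal{X}$ and on $\Sigma$ are Hilbert--Schmidt, and Mercer's theorem yields spectral expansions
\[
\tilde{\mathbf{k}}(x,x')=\sum_{l}\lambda_l^{(1)}\psi_l(x)\psi_l(x'),\qquad \tilde{\mathbf{k}}(\varepsilon,\varepsilon')=\sum_{l}\lambda_l^{(2)}\psi_l(\varepsilon)\psi_l(\varepsilon'),
\]
with eigenpairs solving the displayed equations. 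Substituting into the decomposition $h=\tilde{\mathbf{k}}(x_i,x_j)+\tilde{\mathbf{k}}(\varepsilon_i,\varepsilon_j)-\tilde{\mathbf{k}}(x_i,\varepsilon_j)-\tilde{\mathbf{k}}(\varepsilon_i,x_j)$ and using independence of $x_i$ and $\varepsilon_i$ at $\theta_0$ to kill cross-term expectations in the $U$-statistic, the classical CLT for degenerate two-sample $U$-statistics (Serfling, Ch.~5, or Lee's $U$-statistics monograph) gives
\[
n\widehat{S}_{\mathcal{H}}(\theta_0)\;\rightsquigarrow\;\sum_{l=1}^{\infty}\lambda_l\,(z_l^2-1),\qquad \lambda_l=\lambda_l^{(1)}+\lambda_l^{(2)},
\]
with $z_l$ i.i.d.\ standard normal, matching the stated limit.

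Finally I would transfer this limit to $\theta^{*}$. By Theorem \ref{thm:Unbias}, $\theta^{*}\xrightarrow{p}\theta_0$. Using Lipschitz continuity of $\rho(X,Y,\cdot)$ and boundedness of $\mathbf{k}$ (C10, C11), the increments of the process $\theta\mapsto n\widehat{S}_{\mathcal{H}}(\theta)$ admit a uniform modulus of continuity on a shrinking neighborhood of $\theta_0$; combined with the envelope $0\le \widehat{S}_{\mathcal{H}}(\theta^{*})\le \widehat{S}_{\mathcal{H}}(\theta_0)$ coming from $\theta^{*}$ being the sample minimizer, one obtains $n\widehat{S}_{\mathcal{H}}(\theta^{*})=n\widehat{S}_{\mathcal{H}}(\theta_0)+o_p(1)$, and Slutsky's theorem delivers the announced weak limit.

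The hard part will be this last transfer step. Because the limit is nondegenerate only after multiplication by $n$, one needs $\widehat{S}_{\mathcal{H}}(\theta^{*})-\widehat{S}_{\mathcal{H}}(\theta_0)=o_p(n^{-1})$, which is strictly sharper than the $O_p(n^{-1/2})$ rate of $\theta^{*}$ would naively suggest. I would handle this by expanding $\widehat{S}_{\mathcal{H}}$ quadratically in $\theta$ around $\theta_0$: because $\theta_0$ is an interior minimizer of the population criterion and because $\theta^{*}$ satisfies the sample first-order condition, the leading term in the expansion is quadratic in $\theta^{*}-\theta_0$, whose magnitude is controlled by a second-derivative bound on $\theta\mapsto \rho(X,Y,\theta)$ and an empirical-process argument for the Hessian of $\widehat{S}_{\mathcal{H}}$; together with $\|\theta^{*}-\theta_0\|=o_p(n^{-1/2}\cdot\mbox{slowly vanishing})$, this yields the required $o_p(n^{-1})$ bound.
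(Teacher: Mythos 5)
Your core computation coincides with the paper's: both proofs establish degeneracy of the kernel $h$ at $\theta_0$ (the first-order projection $\mathbb{E}_{q'}[h(q,q')]$ vanishes because $H_{\theta_0}=F\,G_{\theta_0}$), expand the centered kernels $\tilde{\mathbf{k}}$ spectrally via the compact self-adjoint integral operator (Mercer/Karhunen--Loeve), rewrite $\frac{1}{n}\sum_{i\neq j}\tilde{\mathbf{k}}(x_i,x_j)$ as $\frac{1}{n}\sum_l\lambda_l^{(1)}\bigl[(\sum_i\psi_l(x_i))^2-\sum_i\psi_l^2(x_i)\bigr]$, apply the CLT and LLN coordinatewise, and kill the cross terms $\tilde{\mathbf{k}}(x_i,\varepsilon_j)$ by independence of $x$ and $\varepsilon$ at $\theta_0$. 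This is exactly the Serfling Section 5.5.2 route the paper takes.

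The difference lies in the transfer from $\theta_0$ to $\theta^{*}$. The paper does not perform this step at all: its proof opens with ``Under $\theta_0$'' and computes the limit of the $U$-statistic as if the $\varepsilon_i$ were generated at the true parameter, silently identifying $\widehat{S}_{\mathcal{H}}(\theta^{*})$ with $\widehat{S}_{\mathcal{H}}(\theta_0)$. You correctly flag that a separate argument is needed and that it requires $\widehat{S}_{\mathcal{H}}(\theta^{*})-\widehat{S}_{\mathcal{H}}(\theta_0)=o_p(n^{-1})$, but your justification does not close. With only $\|\theta^{*}-\theta_0\|=O_p(n^{-1/2})$ the quadratic term in your expansion is $O_p(n^{-1})$, i.e.\ exactly the order of the limit you are trying to preserve, and the faster rate $o_p(n^{-1/2}\cdot\mbox{slowly vanishing})$ you invoke is asserted rather than derived; for a criterion with positive definite population Hessian (C13) and a nondegenerate linear Hoeffding term away from $\theta_0$, the minimizer generically gives $n\bigl(\widehat{S}_{\mathcal{H}}(\theta_0)-\widehat{S}_{\mathcal{H}}(\theta^{*})\bigr)=O_p(1)$, not $o_p(1)$. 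Moreover, the lower envelope $\widehat{S}_{\mathcal{H}}(\theta^{*})\geq 0$ is not available for the off-diagonal $U$-statistic form in \eqref{eq:Square-Empirical-Distance}, which can be negative. So the one step you add beyond the paper is the step that remains genuinely open; the remainder reproduces the paper's argument.
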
 \begin{proof} See Appendix \ref{sub:Proof-of-Theorem-WeakConvergence}.
\end{proof}

\begin{thm} Let $\theta^{*}$ be the value which gives
the minimum distance of $\widehat{S}_{\mathcal{H}}(\theta)$. Under conditions of Theorem \ref{thm:Unbias}, it follows
that $\widehat{S}_{\mathcal{H}}(\theta^{*})$ converges in distribution
to a Gaussian random variable such that 
\[
\sqrt{n}\left(\widehat{S}_{\mathcal{H}}(\theta^{*})-S_{\mathcal{H}}(\theta_{0})\right)\rightsquigarrow\mathcal{N}(0,\sigma_{s}^{2}),
\]
where $\sigma_{s}^{2}:=4\left(\mathbb{E}_{q}\left[\mathbb{E}_{q'}h(q,q')\right]^{2}-\left[\mathbb{E}_{q,q'}h(q,q')\right]^{2}\right)$.
\end{thm}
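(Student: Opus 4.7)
The plan is to recognize $\widehat{S}_{\mathcal{H}}(\theta)$ as a standard degree-$2$ $U$-statistic with symmetric kernel
\[
h(q_i,q_j) = \mathbf{k}(x_i,x_j) + \mathbf{k}(\varepsilon_i,\varepsilon_j) - \mathbf{k}(x_i,\varepsilon_j) - \mathbf{k}(x_j,\varepsilon_i),
\]
whose expectation under the population law of $(X,\varepsilon)$ equals $S_{\mathcal{H}}(\theta)$, and then to invoke Hoeffding's central limit theorem for non-degenerate $U$-statistics followed by a plug-in argument that justifies replacing $\theta_{0}$ by the consistent minimizer $\theta^{*}$. The proof therefore splits naturally into (a) a CLT for $\widehat{S}_{\mathcal{H}}(\theta_{0})$, and (b) the transfer of this limit to $\widehat{S}_{\mathcal{H}}(\theta^{*})$.

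For part (a), I would use the Hoeffding decomposition $h(q,q') = S_{\mathcal{H}}(\theta_{0}) + h_{1}(q) + h_{1}(q') + h_{2}(q,q')$, where $h_{1}(q) := \mathbb{E}_{q'}h(q,q') - S_{\mathcal{H}}(\theta_{0})$ is the first-order projection and $h_{2}$ is the completely degenerate remainder. Boundedness of $\mathbf{k}$ in C11 together with the square-integrability of C10 guarantee $\mathbb{E} h^{2} < \infty$, so the linear-in-$i$ sum dominates and the classical $U$-statistic CLT yields
\[
\sqrt{n}\bigl(\widehat{S}_{\mathcal{H}}(\theta_{0}) - S_{\mathcal{H}}(\theta_{0})\bigr) = \frac{2}{\sqrt{n}} \sum_{i=1}^{n} h_{1}(q_{i}) + o_{P}(1) \rightsquigarrow \mathcal{N}(0, 4\zeta_{1}),
\]
with the factor of $2$ being standard for symmetric degree-$2$ kernels and $\zeta_{1} = \mathrm{Var}(h_{1}(q)) = \mathbb{E}_{q}[\mathbb{E}_{q'}h(q,q')]^{2} - [\mathbb{E}_{q,q'}h(q,q')]^{2}$. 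This identifies $\sigma_{s}^{2} = 4\zeta_{1}$ as claimed.

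For part (b), the task is to show $\sqrt{n}\bigl(\widehat{S}_{\mathcal{H}}(\theta^{*}) - \widehat{S}_{\mathcal{H}}(\theta_{0})\bigr) = o_{P}(1)$. Since $\theta^{*} \to \theta_{0}$ by Theorem~\ref{thm:Unbias}, I would argue via stochastic equicontinuity of the empirical process $\mathbb{G}_{n}(\theta) := \sqrt{n}\bigl(\widehat{S}_{\mathcal{H}}(\theta) - S_{\mathcal{H}}(\theta)\bigr)$ indexed by $\theta \in \Theta$. Continuity of $\rho(x,y,\cdot)$ in $\theta$ together with Lipschitz control of $\mathbf{k}$ make the $\theta$-indexed kernel class uniformly bounded, continuous, and Donsker, so $\mathbb{G}_{n}(\theta^{*}) - \mathbb{G}_{n}(\theta_{0}) = o_{P}(1)$. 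The deterministic remainder is controlled by a first-order Taylor expansion of $S_{\mathcal{H}}$ around $\theta_{0}$: since $\theta_{0}$ minimizes $S_{\mathcal{H}}$ the gradient vanishes there, yielding $S_{\mathcal{H}}(\theta^{*}) - S_{\mathcal{H}}(\theta_{0}) = O_{P}(\|\theta^{*}-\theta_{0}\|^{2}) = o_{P}(n^{-1/2})$. Slutsky's lemma then combines parts (a) and (b) to yield the stated Gaussian limit.

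The main obstacle is the uniform control in part (b): establishing the Donsker property for the $\theta$-indexed kernel class requires additional smoothness of $\rho(x,y,\theta)$ in $\theta$ that is not made fully explicit in C1--C12, and one must separately verify that $S_{\mathcal{H}}$ is sufficiently differentiable at $\theta_{0}$ to justify the quadratic Taylor remainder. A secondary interpretive subtlety is that at the identified parameter $\theta_{0}$, where $H_{\theta_{0}} = P_{\theta_{0}}$, the projection kernel $h_{1}$ vanishes identically, so $\sigma_{s}^{2} = 0$; the present theorem then reduces to $\sqrt{n}\,\widehat{S}_{\mathcal{H}}(\theta^{*}) = o_{P}(1)$, which is internally consistent with the sharper $n$-rate, chi-squared-type limit established in Theorem~\ref{thm:WeakConvergence}.
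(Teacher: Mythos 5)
Your proposal is correct and takes essentially the same route as the paper: part (a) is exactly the classical non-degenerate $U$-statistic CLT via the Hoeffding decomposition, which is the content of the Serfling (1980, p.~193, Theorem B) result the paper cites, with $\sigma_{s}^{2}=4\zeta_{1}$. You actually go further than the paper, which omits any argument for replacing $\theta_{0}$ by $\theta^{*}$, and your closing observation that the kernel is degenerate at $\theta_{0}$ (so $\sigma_{s}^{2}=0$ and the stated Gaussian limit collapses to $o_{P}(1)$, consistent with the $n$-rate limit of Theorem \ref{thm:WeakConvergence}) is a correct and relevant reading of what the theorem actually delivers.
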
 \begin{proof} With Theorem \ref{thm:WeakConvergence},
the proof follows directly from the result in \citet[p. 193 Theorem B,][]{Serfling1980}.
\end{proof}

\subsection{Optimal Information}

In this section we show the proposed estimator is more efficient than that in \citet{BrownWegkamp2002} (BW).
To derive an information matrix of $\theta^{*}$ comparable with that
of BW, we need additional assumptions:
\begin{itemize}
\item[C13.] The integrated minimum distance criterion function has a positive
definite second derivative matrix $\mathbf{V}$ at $\theta_{0}$.
\item[C14'.] Define $D_{\theta}(q)\equiv H_{\theta}(q)-P_{\theta}(q)$ is
differentiable at $\theta_{0}$ in $L_{2}(\mu)$ where $\mu$ is an
underlying probability measure for $q=(x,\varepsilon)$. Assume that
\begin{equation}
\lim_{\|\theta-\theta_{0}\|\rightarrow0}\int\left(\frac{D_{\theta}(q)-(\theta-\theta_{0})^{T}\varDelta(q)}{\|\theta-\theta_{0}\|}\right)^{2}d\mu(q)=0,\label{eq:DiffMean}
\end{equation}
where $\varDelta$ is the differentiable mean of $D_{\theta}(q)$
in $L_{2}(\mu)$.
\end{itemize}
Condition C13 refers to the condition A.7 in BW and it is about the
second derivative matrix of the criterion function $M(\theta)$. In our setting, the integrated
criterion function in C13 is $\widehat{W}_{\mathcal{H}}(\theta)$ which
always has second derivative if the kernel function is smooth and
second order differentiable. C14' requires the existence of differentiable
mean $\varDelta(q)$ in $L_{2}(\mu)$ for $D_{\theta}(q)$. The differentiable
mean $\varDelta(q)$ is essential to the information contained in
BW's estimator. The limiting covariance matrix of $n^{-1/2}(\hat{\theta}_{BW}-\theta)$
in BW is $4\mathbf{V}^{-1}\mathbf{W}\mathbf{V}^{-1}$, where $\mathbf{V}$
is the second derivative matrix of $M(\theta)$ and $\mathbf{W}$
is
\begin{align*}
\mathbf{W}= & \int\int\varDelta(q)\varDelta(q')\times\\
 & \underset{(*)}{\underbrace{\left[F(x)F(x')G(\varepsilon_{min})+F(x_{min})G(\varepsilon)G(\varepsilon')+H(x_{min},\varepsilon_{min})-3H(q)H(q')\right]}}d\mu(q)d\mu(q'),
\end{align*}
and $\varepsilon_{min}:=\varepsilon\wedge\varepsilon'$, $x_{min}:=x\wedge x'$.
The $(*)$ term in $\mathbf{W}$ is a weight for the product measure
$d(\mu(q)\times\mu(q'))$. 

We show that if there is a unique true $\mu$ in C14' then assumption
C14' is equivalent to the minimum optimal measure $\gamma$ in our
setting. The idea is that $\mathbf{W}$ comes from a simplification
of the following covariance term:
\begin{equation*}
\lim_{\|\theta-\theta_{0}\|\rightarrow0}\left(\int\varDelta(q)D_{\theta}(q)d\mu(q)\right)\left(\int\varDelta(q')D_{\theta}(q')d\mu(q')\right)^{T},
\end{equation*}
which is equivalent (in the weak topological sense) to $\lim_{\|\theta-\theta_{0}\|\rightarrow0}W_{\mathcal{H}}(\theta)$.
The optimal measure corresponds to the most efficient information.
We modify condition C14' in order to emphasize that endogenous effect
of the measure $\mu$ in simultaneous equations.
\begin{itemize}
\item[C14.] $D_{\theta}(q)$ has a differentiable quadratic mean (DQM) of
$\varDelta(q)$ at $\theta_{0}$ such that
\begin{equation}
\lim_{\|\theta-\theta_{0}\|\rightarrow0}\int\left(\frac{\sqrt{h_{\theta}(q)}-\sqrt{p_{\theta}(q)}-(\theta-\theta_{0})^{T}\varDelta_{w}(q)\mathbf{k}^{-1/2}(q)}{\|\theta-\theta_{0}\|}\right)^{2}w_{\theta}(q)dq=0,\label{eq:DiffMean-1}
\end{equation}
where $0<\mathbf{k}(q)<1$ is the kernel on $\mathcal{Q}$ such that
$\mathbf{k}(q)=\left\langle \Phi(\cdot),\Phi(q)\right\rangle $ such
that $Y=f(q)=\left\langle \alpha,\Phi(q)\right\rangle $ for some
coefficient $\alpha$ and the feature map $\Phi(\cdot)$. The endogenous
weight is embedded in $\mathcal{H}$ such that $w_{\theta}\in\mathcal{H}$
for any $\theta$ and at $\theta_{0}$,
\begin{equation*}
w_{\theta_{0}}^{1/2}(q)dq=d\left(\sqrt{H_{\theta_{0}}}-\sqrt{P_{\theta_{0}}}\right)(q).
\end{equation*}
\end{itemize}
C14 has two differences with C14'. First, we consider the DQM of $\sqrt{dD_{\theta}(q)}$
in C14 instead of the differentiable mean of $D_{\theta}(q)$. Second,
DQM of $\sqrt{dD_{\theta}(q)}$ is associated with an endogenous weight
$w_{\theta}(q)$. Assume
that the simultaneous model $\rho(X,Y,\theta)=\varepsilon$ satisfies
C1-C9. The idea of making these differences is to attain a better representation
for the derivative\footnote{The square root of density may not exist in general. The assumption of DQM induces a score involving the endogenous term $w_{\theta}$ that is comparable to the one in BW.}: 
\[
\varDelta_{w}(q)\approx(\sqrt{dD_{\theta}(q)w_{\theta}(q)})'=\frac{1}{2\sqrt{D_{\theta}(q)w_{\theta}(q)}}(D_{\theta}(q)w_{\theta}(q))'
\]
so that $\left.(\sqrt{D_{\theta}(q)w_{\theta}(q)})'\right|_{\theta=\theta_{0}}\times w_{\theta_{0}}^{-1/2}=\left.(\log D_{\theta}(q)w_{\theta}(q))'\right|_{\theta=\theta_{0}}=\iota_{\theta_{0}}(q)$
which is the information score. 

The information form in a minimum Hellinger distance with DQM condition
has the following result:
\begin{align*}
\lim_{\|\theta-\theta_{0}\|\rightarrow0}\mathbf{d}_{\mathbf{H}}^{2}(H_{\theta},FG_{\theta})= & \lim_{\|\theta-\theta_{0}\|\rightarrow0}\frac{1}{2}\int\left(\sqrt{h_{\theta}(q)w_{\theta}(q)}-\sqrt{p_{\theta}(q)w_{\theta}(q)}\right)^{2}dq\\
\approx & \frac{1}{2}(\theta-\theta_{0})^{T}\mathcal{I}(\theta_{0})(\theta-\theta_{0})^{T}
\end{align*}
where $\mathcal{I}(\theta_{0})$ is the Fisher type (or BW type) information
matrix. The following theorem shows that estimator based on $W_{\mathcal{H}}(\theta)$
or $S_{\mathcal{H}}(\theta)$-distance share the same form. 

\begin{thm} \label{thm:optimal-info}Given C1-C14, $S_{\mathcal{H}}(\theta)$
can be expressed as
\[
S_{\mathcal{H}}(\theta)=\max_{f\in\mathcal{F}}\int\left|h_{\theta}(q)-p_{\theta}(q)\right|^{2}f(q)dq.
\]
Let $S_{\mathcal{H}}(\theta)$ attain its minimum at $(\theta^{*},\mathbf{k})$.
The score of $S_{\mathcal{H}}(\theta)$ at $\theta_{0}$ is 
\[
\int\varDelta_{w}(q)\left(\frac{w_{\theta_{0}}}{\mathbf{k}}\right)^{\frac{1}{2}}d(H_{\theta_{0}}-P_{\theta_{0}})(q).
\]
The covariance matrix 
\[
\left[\int\left(\varDelta_{w}(q)\left(\frac{w_{\theta}}{w_{\theta^{*}}}\right)^{\frac{1}{2}}(q)\right)d(H_{\theta}-P_{\theta})(q')\right]\left[\int\left(\varDelta_{w}(q)\left(\frac{w_{\theta_{0}}}{w_{\theta^{*}}}\right)^{\frac{1}{2}}(q')\right)d(H_{\theta}-P_{\theta})(q')\right]^{T}
\]
which is optimal at $w_{\theta^{*}}=\mathbf{k}$. \end{thm}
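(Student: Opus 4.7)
My plan is to derive the three assertions sequentially, tying together a variational representation, a DQM score expansion, and an efficiency calculation via Cauchy--Schwarz.

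First, for the variational characterization, I would start from the identity already used in the proof of Lemma \ref{lem:Identification}, namely $S_{\mathcal{H}}(\theta)=\|H_{\theta}\mathbf{k}-P_{\theta}\mathbf{k}\|_{\mathcal{H}}^{2}=\int\int\mathbf{k}(q,q')\,d(H_{\theta}-P_{\theta})(q)\,d(H_{\theta}-P_{\theta})(q')$. By the embedding $\mathcal{F}\hookrightarrow\mathcal{B}_{\mathcal{H}}$ supplied by Theorem \ref{Representation-Theorem} and the reproducing property $f(q)=\langle f,\mathbf{k}(\cdot,q)\rangle_{\mathcal{H}}$, each $f\in\mathcal{F}$ satisfies $\int f\,d(H_{\theta}-P_{\theta})=\langle f, H_{\theta}\mathbf{k}-P_{\theta}\mathbf{k}\rangle_{\mathcal{H}}$, so Cauchy--Schwarz gives $S_{\mathcal{H}}(\theta)=\sup_{f\in\mathcal{F}}|\int f\,d(H_{\theta}-P_{\theta})|^{2}$, with equality attained by the direction proportional to $H_{\theta}\mathbf{k}-P_{\theta}\mathbf{k}$. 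Absorbing the weight $w_{\theta}$ via C14 and substituting $d(H_{\theta}-P_{\theta})=(h_{\theta}-p_{\theta})\,dq$ rewrites the supremum in the weighted quadratic form $\max_{f\in\mathcal{F}}\int|h_{\theta}-p_{\theta}|^{2}f(q)\,dq$, with the maximizer corresponding to $f=\mathbf{k}$.

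Second, I would compute the score by differentiating this variational expression at $\theta_{0}$. Identification (C6) forces $H_{\theta_{0}}=P_{\theta_{0}}$, so the zero-order term vanishes. Invoking the DQM expansion $\sqrt{h_{\theta}}-\sqrt{p_{\theta}}=(\theta-\theta_{0})^{T}\varDelta_{w}(q)\mathbf{k}^{-1/2}(q)+o(\|\theta-\theta_{0}\|)$ in $L_{2}(w_{\theta}\,dq)$ from C14, together with the pinning-down relation $w_{\theta_{0}}^{1/2}\,dq=d(\sqrt{H_{\theta_{0}}}-\sqrt{P_{\theta_{0}}})$, I can pass to the limit $\|\theta-\theta_{0}\|\to 0$ inside the inner product and read off the first-order gradient as $\int\varDelta_{w}(q)\left(w_{\theta_{0}}/\mathbf{k}\right)^{1/2}d(H_{\theta_{0}}-P_{\theta_{0}})(q)$, which is exactly the claimed score.

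Third, for the covariance, I would compose this score expansion with the asymptotic linearisation already established in Theorem \ref{thm:WeakConvergence}. Taking the outer product of the score under perturbations indexed by the competing weight $w_{\theta^{*}}$ produces the sandwich-style expression stated in the theorem, mirroring BW's $4\mathbf{V}^{-1}\mathbf{W}\mathbf{V}^{-1}$ but with the endogenized weight ratios $(w_{\theta}/w_{\theta^{*}})^{1/2}$ and $(w_{\theta_{0}}/w_{\theta^{*}})^{1/2}$ replacing BW's unweighted $\varDelta$. Optimality at $w_{\theta^{*}}=\mathbf{k}$ follows from a Cauchy--Schwarz inequality on the integrand $\varDelta_{w}(q)(w_{\theta_{0}}/w_{\theta^{*}})^{1/2}$: the inequality becomes an equality exactly when the weight coincides with the reproducing kernel that generates $\mathcal{H}$, because only this choice aligns the score direction with the element in $\mathcal{B}_{\mathcal{H}}$ against which the integral is paired.

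The hard part will be turning the heuristic ``alignment'' argument into a strict matrix-level optimality statement. It is routine to show that $w_{\theta^{*}}=\mathbf{k}$ is a stationary point, but I will need to combine the RKHS reproducing property with the positive-definiteness of $\mathbf{V}$ in C13 to ensure that any admissible deviation $w_{\theta^{*}}\neq\mathbf{k}$ leaves a strictly positive Cauchy--Schwarz slack that propagates to a strict inequality in the Loewner order on the covariance, thereby identifying $\mathbf{k}$ as the uniquely efficient choice and recovering the Fisher-type information matrix $\mathcal{I}(\theta_{0})$.
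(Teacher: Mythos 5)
Your route diverges from the paper's and contains genuine gaps. The most serious is the first step: from the RKHS reproducing property and Cauchy--Schwarz you correctly obtain $S_{\mathcal{H}}(\theta)=\sup_{f\in\mathcal{B}_{\mathcal{H}}}\bigl|\int f\,d(H_{\theta}-P_{\theta})\bigr|^{2}$, but ``absorbing the weight'' does not turn the square of a linear functional $\bigl|\int f\,(h_{\theta}-p_{\theta})\,dq\bigr|^{2}$ into the weighted quadratic form $\int|h_{\theta}-p_{\theta}|^{2}f(q)\,dq$; these are different functionals of $f$ and no Cauchy--Schwarz manipulation identifies their suprema. The paper does not attempt this identification either: its proof works entirely through a sandwich between $\max_{\mathcal{F}}\sqrt{S_{\mathcal{H}}(\theta^{*})}$ and the weighted Hellinger distance $\mathbf{d}_{\mathbf{H}}^{2}(H_{\theta^{*}},FG_{\theta^{*}})$, built from two chains of inequalities (Cauchy--Schwarz and H\"older in one direction, boundedness of the square-root densities in the other). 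A related conceptual problem infects your second step: since $H_{\theta_{0}}=P_{\theta_{0}}$ under C6, $S_{\mathcal{H}}(\theta)$ is quadratic in $\theta-\theta_{0}$ and literally differentiating your variational expression at $\theta_{0}$ yields zero. The paper instead expands $2\mathbf{d}_{\mathbf{H}}^{2}(H_{\theta^{*}},P_{\theta^{*}})$ around $\theta_{0}$ into three terms, shows terms (I) and (II) are negligible as $\theta^{*}\to\theta_{0}$, and extracts the score from the cross term (III) using the DQM expansion of C14 together with the relation $w_{\theta_{0}}^{1/2}(q)\,dq=d\bigl(\sqrt{H_{\theta_{0}}}-\sqrt{P_{\theta_{0}}}\bigr)(q)$; that is where the factor $(w_{\theta_{0}}/\mathbf{k})^{1/2}$ actually comes from.

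For the covariance you cite the wrong tool: Theorem \ref{thm:WeakConvergence} gives the degenerate $U$-statistic (weighted chi-square) limit of $n\widehat{S}_{\mathcal{H}}(\theta^{*})$, not a linearisation of $\theta^{*}$. The paper obtains $\theta^{*}=\theta_{0}-2\mathbf{V}^{-1}\int\varDelta_{w}(q)\bigl(w_{\theta_{0}}/\mathbf{k}\bigr)^{1/2}d(H_{\theta_{0}}-P_{\theta_{0}})(q)+o(1)$ from the delta-method argument in BW's Theorem 3.3(iii), using the positive-definite $\mathbf{V}$ of C13, and then forms the outer product. Finally, the optimality at $w_{\theta^{*}}=\mathbf{k}$ is not an ``alignment'' Cauchy--Schwarz argument at all in the paper: it follows directly from $\mathbf{k}=\arg\max_{f\in\mathcal{F}}\int f\,d(H_{\theta}-P_{\theta})(q)$ and $\|\mathbf{k}(q)\|\leq1$, which give $\int\varDelta_{w}(q)\bigl(w_{\theta_{0}}/\mathbf{k}\bigr)^{1/2}d(H_{\theta_{0}}-P_{\theta_{0}})(q)\leq\int\varDelta_{w}(q)\bigl(w_{\theta_{0}}/f\bigr)^{1/2}d(H_{\theta_{0}}-P_{\theta_{0}})(q)$ for every admissible $f$. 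The strict Loewner-order uniqueness you flag as ``the hard part'' is not claimed or needed by the paper; what is needed, and what your proposal is missing, is the Hellinger sandwich and the BW linearisation.
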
 

\begin{proof}See Appendix \ref{sub:Proof-of-Theorem optimal-info}
\end{proof}
\begin{rem*}
The densities $h_{\theta}(q)$ and $p_{\theta}(q)$ may not exist
in the previous general inference results. However, in order to distinguish
the optimal measure from the underlying measure $\mu$ in BW, we modify
C14' (a condition on probability measures $H_{\theta}$ and $P_{\theta}$)
to C14 (a condition on their densities). Thus, instead of using $d\mu$,
we use $w_{\theta}(q)dq$ to emphasize the important role of selecting the
optimal measure.
\end{rem*}

\section{Computation and Numerical Experiment}\label{sec:Computation}

\subsection{Computation of the Estimator}
Theorems \ref{Thm:Bias} and \ref{thm:Unbias} induce the equivalence
between $\widehat{W}_{\mathcal{H}}(\theta)$ and $\widehat{S}_{\mathcal{H}}(\theta)$.
Thus the estimator $\theta^{*}$ minimizes the empirical $\widehat{W}_{\mathcal{H}}(\theta)$-distance
or $\widehat{S}_{\mathcal{H}}(\theta)$, the square of $\widehat{W}_{\mathcal{H}}(\theta)$-distance.
By minimizing the distance function, the statistic $\widehat{S}_{\mathcal{H}}(\theta^{*})$
gives a regular consistent solution to the dual problem in \eqref{eq:empiricalMMD}:
\begin{align*}
\theta^{*} & =\arg\min_{\theta\in\Theta}\widehat{W}(\theta):=\arg\min_{\theta\in\Theta}\sup_{\|f\|_{l}\leq1}\left|\frac{1}{m}\sum_{i=1}^{m}f(q_{i})-\frac{1}{n}\sum_{j=1}^{n}f(q_{j}^{'})\right|,\\
 & =\arg\min_{\theta\in\Theta}\widehat{S}_{\mathcal{H}}(\theta):=\arg\min_{\theta\in\Theta}\left[\frac{1}{m^{2}}\sum_{i\neq j}^{m}\mathbf{k}(x_{i},x_{j})-\frac{2}{mn}\sum_{i,j=1}^{m,n}\mathbf{k}(x_{i},\varepsilon_{j})+\frac{1}{n^{2}}\sum_{i\neq j}^{n}\mathbf{k}(\varepsilon_{i},\varepsilon_{j})\right],\\
\mbox{s.t.} & \quad(x_{i},\varepsilon_{i})=q_{i}\sim H_{n\theta},\qquad(x_{j},\varepsilon_{j})=q_{j}^{'}\sim P_{n\theta}.
\end{align*}

The simplest form of implementing this estimate is to use the grid
search. One can evaluate $\theta$ at different locations and then
compute $\widehat{S}_{\mathcal{H}}(\theta)$ thorough the kernel representation.
This idea is feasible for one dimension case. However, due to the
fact that the simultaneous equation problem in \eqref{eq:empiricalMMD}
has at least two variables $(x_{i},\varepsilon_{i})$, grid search
is computational demanding.

Based on the duality, we propose a kernel-based Kantorovitch formulation.
The formulation will induce a Linear Programming (LP) problem for
the dual representation of 
\[
C_{i,j}:=\left[\mathbf{k}(x_{i},x_{j})-\mathbf{k}(x_{i},\varepsilon_{j})\right]-\left[\mathbf{k}(x_{i},\varepsilon_{j})-\mathbf{k}(\varepsilon_{i},\varepsilon_{j})\right]\geq0.
\]
Let $\gamma_{i,j}(\theta)$ be the empirical counterpart of the transportation
measure $\gamma\in\Gamma(H_{\theta},P_{\theta})$ in \eqref{eq:Kantonovich}.
Then $dH_{n\theta}=\sum_{j=1}^{m}\gamma_{i,j}(\theta)$ and $dP_{n\theta}=\sum_{i=1}^{n}\gamma_{i,j}(\theta)$.
The kernel-based Kantorovitch formulation of \eqref{eq:Kantonovich}
is 
\begin{equation}
\gamma_{i,j}(\theta^{*})\in\arg\min_{\gamma_{i,j}(\theta)\in\Gamma(H_{n\theta},P_{n\theta})}\sum_{i,j}\gamma_{i,j}(\theta)C_{i,j}.\label{eq:LP-dual}
\end{equation}
Note that \eqref{eq:LP-dual} is a standard LP problem. Every LP has
a dual. The neatest way to write the dual of \eqref{eq:LP-dual} is:
\begin{equation}
\max_{u_{i},v_{j}}\sum_{i}u_{i}+\sum_{j}v_{j}\quad\mbox{s.t. }u_{i}+v_{j}\geq C_{i,j}\label{eq:LP-dual-dual}
\end{equation}
where $u_{i}$ and $v_{j}$ are the Lagrange multipliers of the constraints
\[
dH_{n\theta}(x_{i},\varepsilon_{i})=\sum_{j=1}^{m}\gamma_{i,j}(\theta),\qquad dP_{n\theta}(x_{j},\varepsilon_{j})=\sum_{i=1}^{n}\gamma_{i,j}(\theta),
\]
where the measures are sums of Diracs.

The dual problem \eqref{eq:LP-dual-dual} can be solved by the simplex
method which is feasible for two or higher dimensional problems. It
means that we could set up a simultaneous equation problem and solve
it using the simplex method as with any LP problem. The duality between
\eqref{eq:LP-dual-dual} and \eqref{eq:LP-dual} is given by the following
corollary. \begin{cor} \label{cor:ImpDual}If \eqref{eq:LP-dual-dual}
has a solution $(u,v)$ such that $u_{i}+v_{j}=C_{i,j}$, then this
solution induces a solution $\gamma_{i,j}(\theta^{*})$ in \eqref{eq:LP-dual},
and vice versa.\end{cor}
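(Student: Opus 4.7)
The plan is to recognize Corollary \ref{cor:ImpDual} as a direct application of strong linear programming duality together with complementary slackness for a (finite, discrete) optimal transport LP. First I would rewrite the primal \eqref{eq:LP-dual} in canonical form: minimize $\sum_{i,j} C_{i,j}\gamma_{i,j}$ over $\gamma_{i,j}\ge 0$ subject to the two families of equality constraints $\sum_j\gamma_{i,j} = dH_{n\theta}(x_i,\varepsilon_i)$ and $\sum_i\gamma_{i,j} = dP_{n\theta}(x_j,\varepsilon_j)$. Forming the Lagrangian with multipliers $u_i$ and $v_j$ attached to these two marginal constraints and minimizing out $\gamma_{i,j}\ge 0$ forces $C_{i,j}-u_i-v_j\ge 0$, i.e. $u_i+v_j\le C_{i,j}$, and then yields \eqref{eq:LP-dual-dual} as the Lagrangian dual (after a harmless sign flip, since the cost matrix $C_{i,j}\ge 0$ by construction).

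Next I would verify that both LPs are feasible and bounded, so that strong duality is available. Primal feasibility is immediate because the product coupling $\gamma_{i,j}= dH_{n\theta}(x_i,\varepsilon_i)\cdot dP_{n\theta}(x_j,\varepsilon_j)$ satisfies the marginal constraints, and the primal objective is bounded below by zero since $C_{i,j}\ge 0$. Dual feasibility can be established constructively, e.g. $u_i=\min_j C_{i,j}$ and $v_j=0$ satisfies $u_i+v_j\le C_{i,j}$ and has a finite objective because $C$ is a bounded, finite-dimensional matrix. With both problems feasible and bounded, strong LP duality gives
\begin{equation*}
\min_{\gamma\in\Gamma(H_{n\theta},P_{n\theta})}\sum_{i,j}\gamma_{i,j}C_{i,j}
\;=\;\max_{u_i+v_j\le C_{i,j}}\Bigl(\sum_i u_i+\sum_j v_j\Bigr),
\end{equation*}
and optimal solutions exist on both sides.

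The final step is complementary slackness. Given any optimal pair $(\gamma^*,(u,v))$, the KKT conditions force $\gamma^*_{i,j}(C_{i,j}-u_i-v_j)=0$ for every $(i,j)$. Thus, if a dual optimum $(u,v)$ attains $u_i+v_j=C_{i,j}$ on a subset $\mathcal{S}$ of index pairs, any primal optimum $\gamma^*$ must be supported on $\mathcal{S}$, and conversely any $\gamma^*$ optimal for \eqref{eq:LP-dual} determines a dual feasible $(u,v)$ saturating the constraint exactly on the support of $\gamma^*$; feasibility of $\gamma^*$ on the marginals and complementary slackness then give dual optimality via the equality of objective values. This yields the claimed bijection between solutions of \eqref{eq:LP-dual} and those of \eqref{eq:LP-dual-dual}.

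The only step that requires a small amount of care is the nonnegativity assumption $C_{i,j}\ge 0$ stated in the excerpt: one needs that the kernel differences $[\mathbf{k}(x_i,x_j)-\mathbf{k}(x_i,\varepsilon_j)]-[\mathbf{k}(x_i,\varepsilon_j)-\mathbf{k}(\varepsilon_i,\varepsilon_j)]$ indeed define a legitimate transportation cost so that the LP is a bona fide Kantorovich problem; once this is granted, the corollary is a textbook consequence of LP duality. The main conceptual obstacle, rather than a technical one, is checking that the Lagrangian construction here coincides with the dual \eqref{eq:LP-dual-dual} as written (i.e., that the sign conventions for $u_i,v_j$ correspond to Lagrange multipliers of the two marginal equality constraints), which is a bookkeeping exercise.
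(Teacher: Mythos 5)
Your proposal follows essentially the same route as the paper: both derive \eqref{eq:LP-dual-dual} from \eqref{eq:LP-dual} by forming the Lagrangian over the two marginal constraints and invoking the saddle-point/duality argument. Your version is, however, noticeably more complete than the paper's proof, which stops at exhibiting the dual formulation and asserts ``the result follows'': you additionally verify primal feasibility (via the product coupling) and boundedness so that strong duality actually applies, and you make the complementary slackness step explicit, which is what really delivers the claimed correspondence between a dual solution saturating $u_i+v_j=C_{i,j}$ and a primal optimum $\gamma_{i,j}(\theta^{*})$ supported on those indices. You are also right to flag the sign convention: minimizing the Lagrangian over $\gamma_{i,j}\geq 0$ forces $u_i+v_j\leq C_{i,j}$, whereas \eqref{eq:LP-dual-dual} as printed has the inequality reversed (under which the maximization would be unbounded); the corollary's hypothesis concerns the equality case, so the statement survives, but your reading with $\leq$ is the correct one and the paper's proof silently carries the same inconsistency.
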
 \begin{proof} See Appendix \ref{sub:Proof-of-Corollary Dual}.
\end{proof} The optimal $\widehat{S}_{\mathcal{H}}(\theta^{*})$-distance
is defined as 
\begin{align*}
 & \widehat{S}_{\mathcal{H}}(\theta^{*})=\sum_{i,j}\gamma_{i,j}(\theta^{*})C_{i,j}\\
\mbox{s.t.}\qquad & dH_{n\theta^{*}}(x_{i},\varepsilon_{i})=\sum_{j=1}^{m}\gamma_{i,j}(\theta^{*})\mbox{ for any }i,\\
 & dP_{n\theta^{*}}(x_{j},\varepsilon_{j})=\sum_{i=1}^{n}\gamma_{i,j}(\theta^{*})\mbox{ for any }j.
\end{align*}
The scheme of computing $\theta^{*}$ is summarized below. 
\begin{enumerate}
\item[] Step 1. Initialization. 
\item[] Step 2. Generate $H_{n\theta}(x_{i},\varepsilon_{i})$
and $P_{n\theta}(x_{j},\varepsilon_{j})$ for given $\theta$. Solve
problem \eqref{eq:LP-dual} via its dual \eqref{eq:LP-dual-dual}.
Obtain $\theta^{*}$. 
\item[] Step 3. Compute $\widehat{S}_{\mathcal{H}}(\theta^{*})$
by the kernel representation. If $\widehat{S}_{\mathcal{H}}(\theta^{*})\neq0$,
then go to Step 2, otherwise done. 
\end{enumerate}
Sampling for $H_{n\theta}(x_{i},\varepsilon_{i})$
and $P_{n\theta}(x_{j},\varepsilon_{j})$ in Step 2 has several options. One can draw the samples $\varepsilon(\theta)$ from $\rho(z,\theta)$ for given $z$ and $\theta$, then obtain the empirical c.d.f.   $ n^{-2} \sum_{i=1}^{n}\sum_{j=1}^{n}\mathbf{1}\{x_{i}\leq x,\rho(z_{j},\theta)\leq\varepsilon\}$  for  $H_{n\theta}$ and $ n^{-2} \sum_{i=1}^{n}\mathbf{1}\{x_{i}\leq x\} \sum_{i=1}^{n}\mathbf{1}\{\rho(z_{i} \theta)\leq\varepsilon\}$ for $P_{n\theta}$. On the other hand, one can resampling $z$ in order to generate more observations for $\varepsilon(\theta)$  from $\rho(z,\theta)$. With a larger (generic) sample size of $z$ and $\varepsilon(\theta)$, one can obtain a smoother empirical c.d.f.. 

Computation for $\widehat{S}_{\mathcal{H}}(\theta^{*})$ in Step 3 needs a LP solver. We use Dikin\rq{}s method in this solver. To outline the routine in Step 3, first we construct the kernel representation such that $\mathbf{y}=\alpha^{T} \Phi(\mathbf{x})$ where $\mathbf{k}(\mathbf{x},\mathbf{x})= \left\langle \Phi(\mathbf{x}),\Phi(\mathbf{x})\right\rangle$, then we use $\mathbf{k}$ to obtain $C_{i,j}$, finally we solve the LP given in \eqref{eq:LP-dual-dual}.

\subsection{Numerical Experiment}
The following numerical
experiment illustrates more details about the computation. We generate
a random vector $\mathbf{q}=(\mathbf{x},\varepsilon)$ which is jointly
distributed as $H_{n\theta}$. The product of marginal empirical distribution
of $\mathbf{x}$ and $\varepsilon$ is $P_{n\theta}$, another empirical
c.d.f.. Figure \ref{fig:Point-clouds-initial} shows two point clouds.
The squares explain the distribution of $H_{n\theta}$ and the dots
explain the distribution of $P_{n\theta_{0}}$ at the value $\theta=\theta_{0}$.
When $\theta=\theta_{0}=1$, the elements $\mathbf{x}$ and $\varepsilon$
are distributed independently as two smaller point clouds. The size
of each dot (square) is proportional to its probability weight.

The specification of this computation is given as follows. The samples $q_{i} = (x_{1,i}, x_{2,i},$  $ \varepsilon_{1,i,\theta}, \varepsilon_{2, i,\theta})_{i\leq 140}$ are jointly generated by a bivariate Gaussian distribution with means $(0, -0.5)$ and $(0, 0.25+ \theta)$ for $\mathbf{x}$ and $\varepsilon$ respectively. All the variances are set to $1$. The correlation between $\mathbf{x}_{2}$ and $\varepsilon_{2}(\theta)$ is $1 - \theta$ while zero for the others. When $\theta_{0}=1$, $\mathbf{x}$ and $\varepsilon(\theta_{0})$ are independent since the correlation value is zero. For $q\rq{}_{i}$,  $(x_{1,i}, x_{2,i})_{i\leq 150}$ and $(\varepsilon_{1,i,\theta}, \varepsilon_{2, i,\theta})_{i\leq 150}$, the samples are drawn independently from two bivariate Gaussian distributions. For $\mathbf{x}$, the mean and variance are $(0,-0.5)$ and $1$. For $\varepsilon(\theta)$, the mean and variance are $(0, 0.25+\theta) $ and $1$. Obviously, these two groups of samples are equivalent under the true $\theta_{0}$. 

\begin{figure}
\begin{centering}
\includegraphics[clip,scale=0.8]{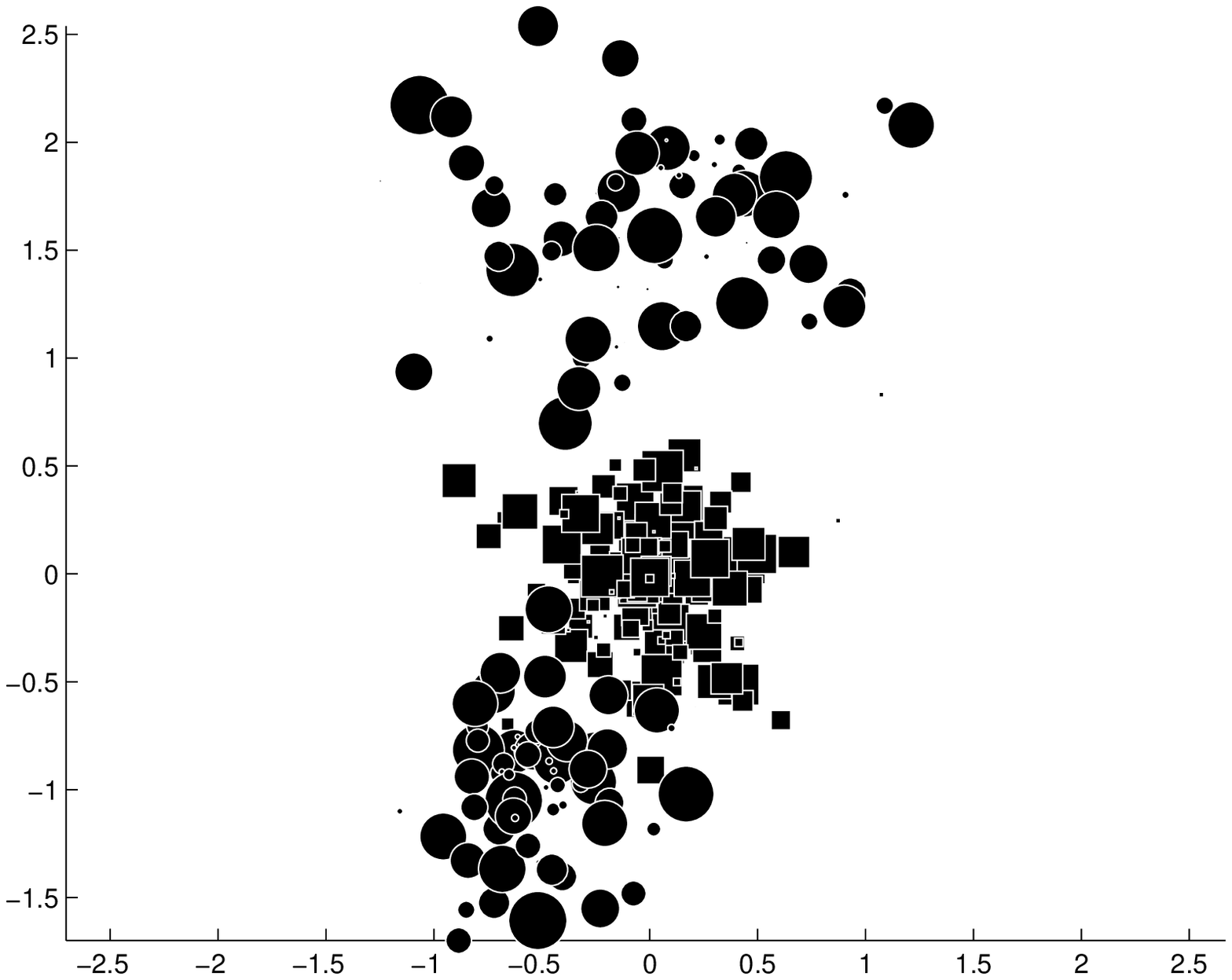} 
\par\end{centering}

\caption{\label{fig:Point-clouds-initial}Point clouds of $\mathbf{q}\sim H_{n\theta}$
and $\mathbf{q}'\sim P_{n\theta_{0}}$.}
\end{figure}

The computation of $\theta$ can be considered as pushing $\mathbf{q}\sim H_{n\theta}$
(or $\mathbf{q}'\sim P_{n\theta}$) towards a new $\mathbf{q}\sim H_{n\theta_{0}}$
(or $\mathbf{q}'\sim P_{n\theta_{0}}$). Under the independence condition,
$\mathbf{q}$ is equivalently distributed as $\mathbf{q}'$ for $\theta=\theta_{0}$.
Figure \ref{fig:Point-clouds-Transform} gives an illustration about
the transformation procedure. The initial points $\mathbf{q}'\sim P_{n\theta}$
at $\theta=0$ are pushed towards to $\mathbf{q}'\sim P_{n\theta_{0}}$
at $\theta=\theta_{0}=1$. The mechanism behind this transformation
relies on the transportation measure $\gamma_{i,j}(\theta)$ from
the LP problem \eqref{eq:LP-dual}. By solving the LP problem, $\theta^{*}$
is obtained when $\gamma_{i,j}(\theta^{*})$ is achieved. Figure \ref{fig:Transition-Matrix}
shows the matrix plot of the optimal $\gamma_{i,j}(\theta^{*})$.

\begin{figure}
\begin{centering}
\includegraphics[clip,scale=0.8]{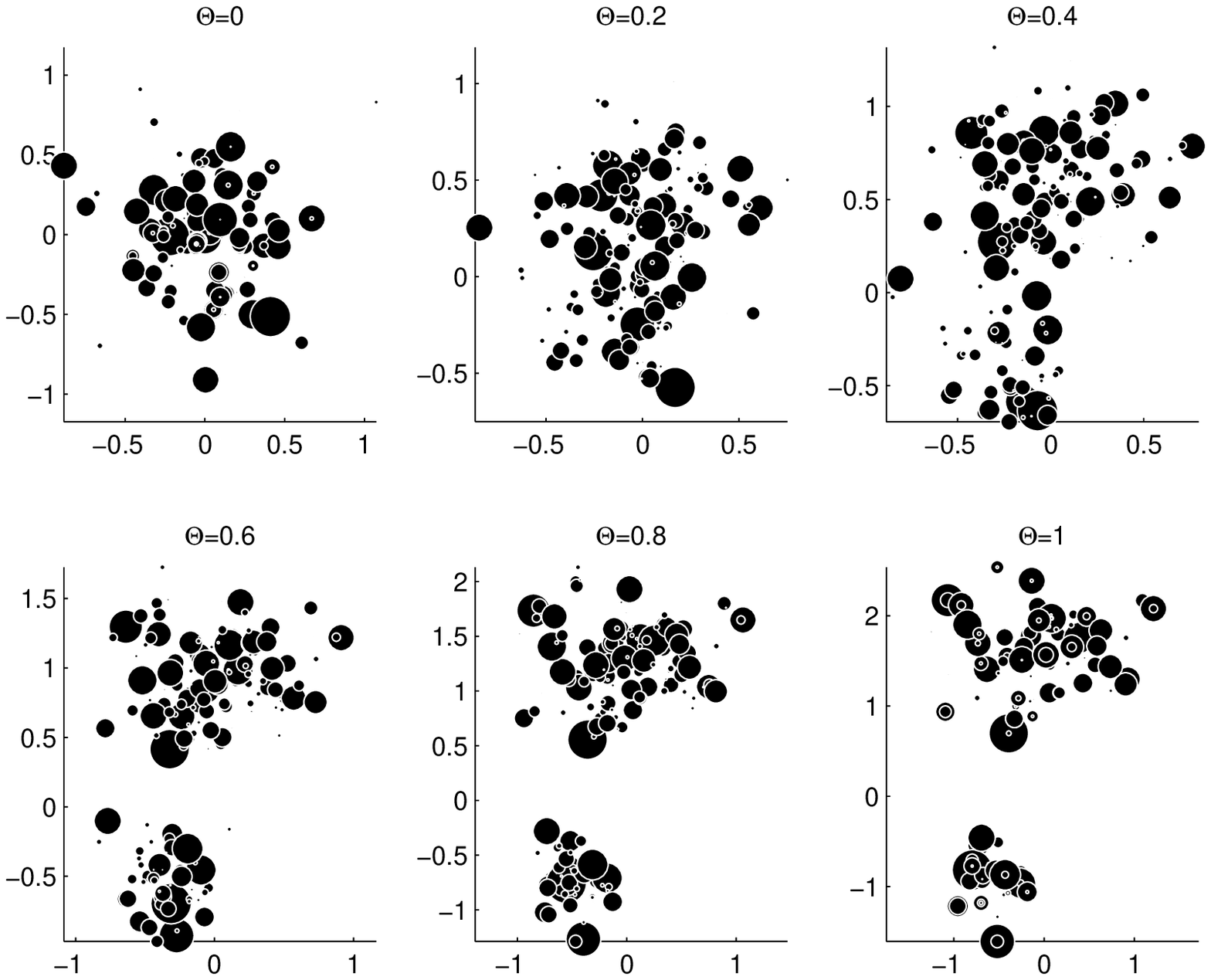} 
\par\end{centering}

\caption{\label{fig:Point-clouds-Transform}Point clouds of $\mathbf{q}'\sim P_{n\theta}$
are pushed towards to $\mathbf{q}'\sim P_{n\theta_{0}}$.}
\end{figure}

In the experiment, we consider the case $n=140$, $m=150$. So $\gamma_{i,j}(\theta)$
is contained in a $150\times140$ matrix. Figure \ref{fig:Transition-Matrix}
shows that for optimal $\gamma_{i,j}(\theta^{*})$ most of the transitions
betweens $q_{i}$ and $q_{j}^{'}$ are never happen, namely with probability
zero. The transition matrix is very sparse and has few non-zero entities.
We also compute the marginal probabilities by summing up $\gamma_{i,j}(\theta^{*})$
w.r.t. $j$ and $i$ respectively: 
\[
dH_{n\theta^{*}}(x_{i},\varepsilon_{i})=\sum_{j=1}^{m}\gamma_{i,j}(\theta^{*}),\qquad dP_{n\theta^{*}}(x_{j},\varepsilon_{j})=\sum_{i=1}^{n}\gamma_{i,j}(\theta^{*}).
\]
The smoothed density plots in Figure \ref{fig:Marginal-Distributions-Gamma}
shows that neither $dH_{n\theta^{*}}(x_{i},\varepsilon_{i})$ nor
$dP_{n\theta^{*}}(x_{i},\varepsilon_{i})$ is close to uniform distribution.
Also, the pattern of the transition matrix in Figure \ref{fig:Transition-Matrix}
shows that the optimal $\gamma_{i,j}(\theta^{*})$ is not uniformly
distributed. In fact, in Figure \ref{fig:Marginal-Distributions-Gamma},
the marginal densities seem have mixture patterns and have very irregular
shapes. This feature is important. It means that in a simultaneous
equation estimation problem: 
\[
\int\mathbf{d}(H_{n\theta}(x,\varepsilon),P_{n\theta}(x,\varepsilon))d\mu(x,\varepsilon),
\]
the optimal choice of $\mu$ could be excluded from any regular specified
distribution, i.e. uniform, normal, etc. Instead of giving a priori
specification of $\mu$, one can endogenize the effect of $\mu$ as
a transportation measure. By computing the new estimator, one will
obtain $\theta^{*}$ and an optimal measure $\gamma_{i,j}(\theta^{*})$
for $\mu$ simultaneously.

\begin{figure}
\begin{centering}
\includegraphics[clip,scale=0.8]{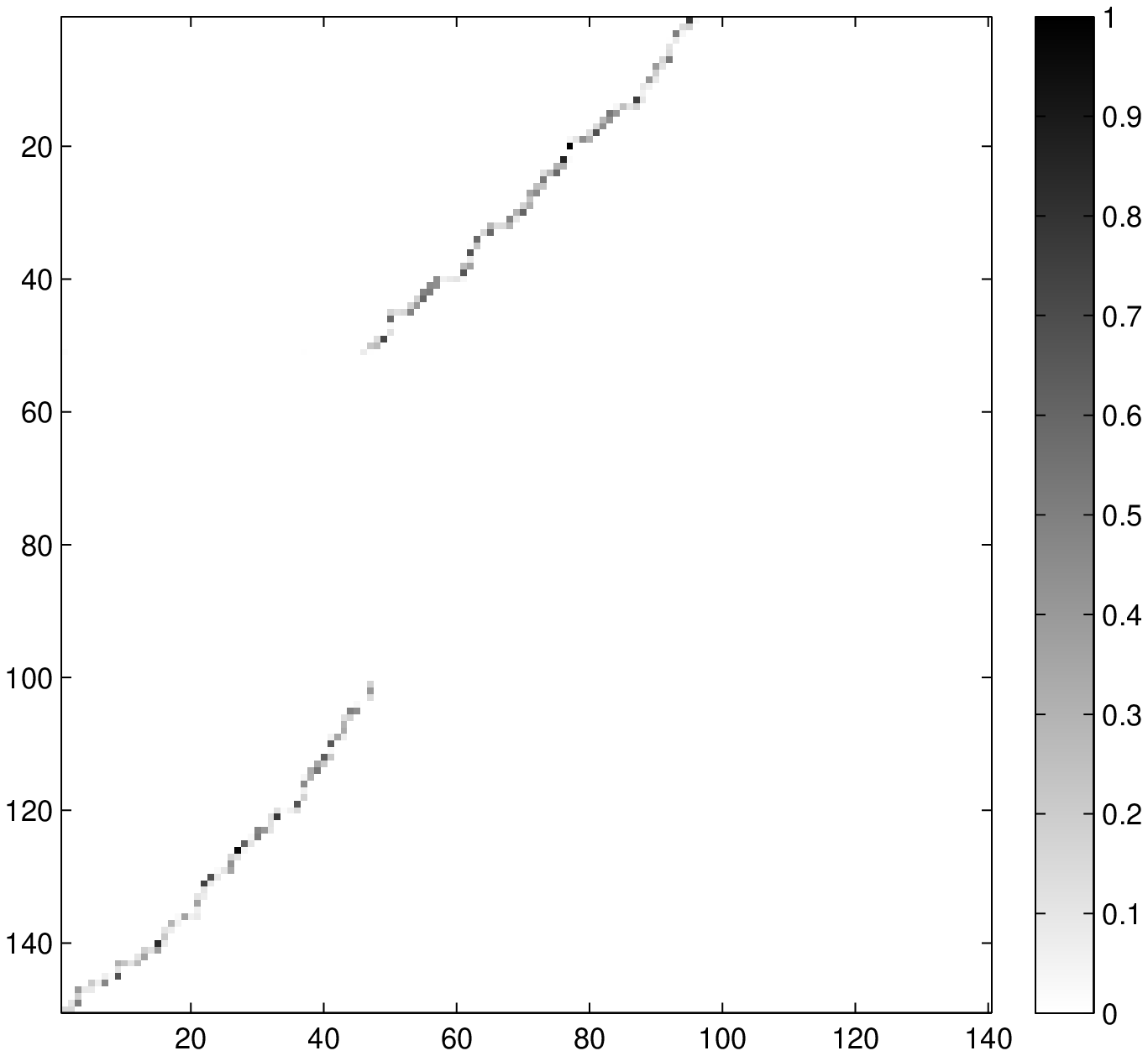} 
\par\end{centering}

\caption{\label{fig:Transition-Matrix} A matrix plot of $\gamma_{i,j}(\theta^{*})$.}
\end{figure}

\begin{figure}
\begin{centering}
\includegraphics[clip,scale=0.8]{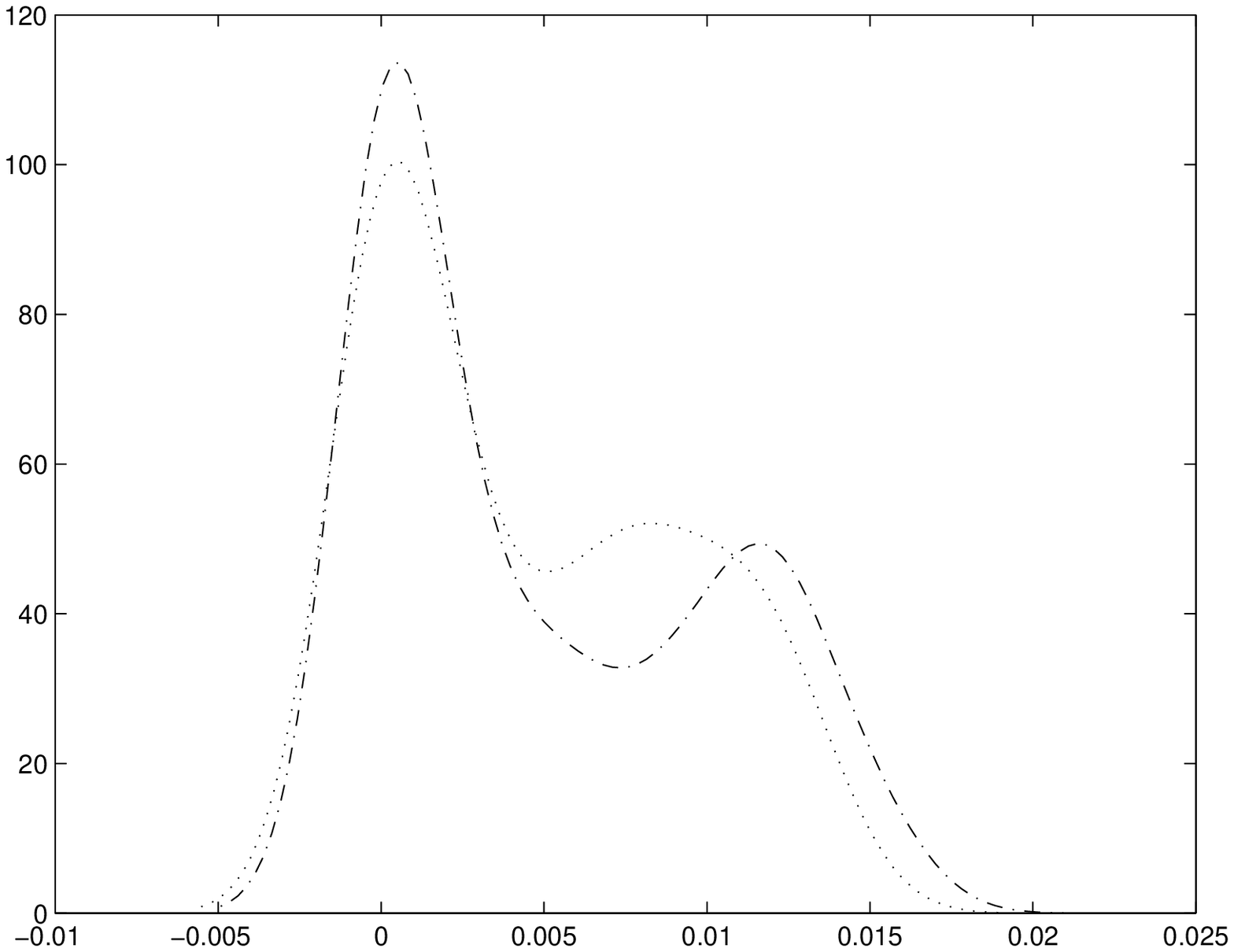} 
\par\end{centering}

\caption{\label{fig:Marginal-Distributions-Gamma} Density plots of $dH_{n\theta^{*}}$
and $dP_{n\theta^{*}}$.}
\end{figure}

Although the role of $\gamma_{i,j}(\theta^{*})$ is a statistical
inference mechanism, its meaning is far more than that. In economic
theory, $\gamma_{i,j}(\theta^{*})$ can induce an optimal transportation
in the matching problem. The optimal inference is then an optimal
matching mechanism. Nonparametric representation of the optimal distance
is applicable for solving some general matching problems. Figure \ref{fig:Optimal-Plan}
shows the role of $\gamma_{i,j}(\theta^{*})$ as pushing the initial
points (square) towards to the posterior points (dots). The lines
in Figure \ref{fig:Optimal-Plan} represent the connections. A line
connecting the square $(x_{i},\varepsilon_{i})$ and the dot $(x_{j}^{'},\varepsilon_{j}^{'})$
means that $\gamma_{i,j}(\theta^{*})C_{i,j}$ for these two groups
is non-zero.

\begin{figure}
\begin{centering}
\includegraphics[clip,scale=0.8]{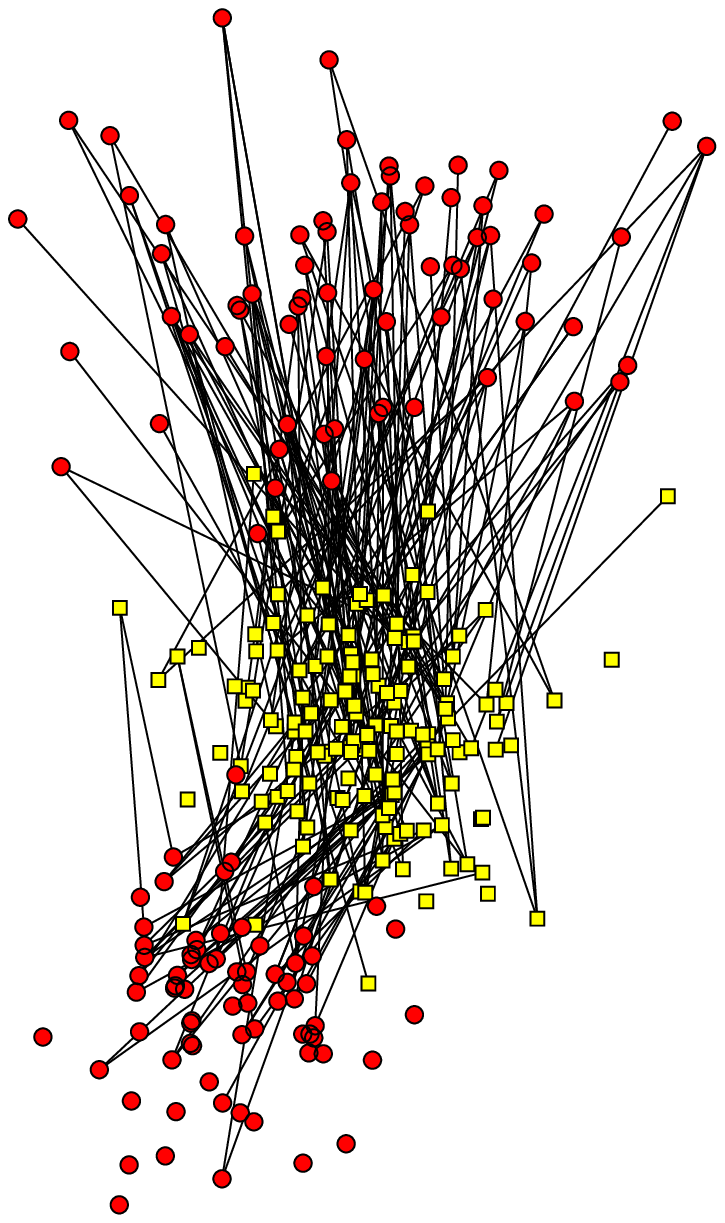} 
\par\end{centering}

\caption{\label{fig:Optimal-Plan} Measure $\gamma_{i,j}(\theta^{*})$ as an
optimal action.}
\end{figure}

\section{Related Likelihood-based Inference\label{sec:Related-Inferential-Methods}}

If the model is fully specified, then the c.d.f.s $H_{\theta}(x,\varepsilon)$
and $P_{\theta}(x,\varepsilon)$ in \eqref{eq:Kantonovich} are some
parametric distributions. The estimation problem becomes: 
\[
\min_{\theta}W(\theta):=\min_{\theta}\inf_{\gamma\in\Gamma(H_{\theta},P_{\theta})}\left\{ \int\mathbf{d}(q,q')d\gamma(q,q'):\: q\sim H_{\theta}(x,\varepsilon)\mbox{ and }q'\sim P_{\theta}(x,\varepsilon)\right\} .
\]
Unlike the empirical optimal distance function $\widehat{W}(\theta)$,
the integrable $W(\theta)$ can have densities for $H_{\theta}(x,\varepsilon)$
and $P_{\theta}(x,\varepsilon)$. We define $p_{\theta}(q')=dP_{\theta}$
to be the density of $\theta$ with respect to some Lebesgue measure.
In parametric frameworks, we can assume that such a density always
exists. Similarly, the density of $H_{\theta}$ also exists and is
denoted by $h_{\theta}(q)$. The conditional density defines the likelihood
function $L(\theta;\mathbf{q}')=\Pr(\mathbf{q}'|\theta)$.

The essence of our approach is to find a map that pushes forward $H_{\theta}(x,\varepsilon)$
to $P_{\theta}(x,\varepsilon)$. Recall that $q$ is a random variable
distributed according to $H_{\theta}(x,\varepsilon)$ and that $q'$
is a random variable distributed according to $P_{\theta}(x,\varepsilon)$.
Then we seek a map $f$ that satisfies the following specification:
\[
q'=f(q),\quad\mbox{where }q\sim H_{\theta}(x,\varepsilon)\mbox{ and }q'\sim P_{\theta}(x,\varepsilon).
\]
Equivalently, we seek a map $f$ which pushes forward $H_{\theta}(x,\varepsilon)$
to $P_{\theta}(x,\varepsilon)$. The map necessarily depends on the
data and the form of the likelihood function. If the independence
condition is satisfied, an identity map would suffice; otherwise more
complicated functions are necessary.

By using the conditional updating argument (Bayes' rule), the posterior
is 
\[
\tilde{p}_{\theta}(q')=\frac{L(\theta;\mathbf{q}')p_{\theta}(f(q))}{\int L(\theta;\mathbf{q}')p_{\theta}(f(q))d\theta}
\]
where $p_{\theta}(f(q))$ emphasizes that $q'=f(q)$. Recall that
the transformed density $p_{\theta}(f(q))$ should be equal to $h_{\theta}(q)$
at $\theta=\theta_{0}$ such that $p_{\theta}(f(q))=h_{\theta}(q)$.
On the other hand, the optimal transportation argument says that $h_{\theta}(q)$
can be transformed into the probability density $p_{\theta}(f(q))$
by the forward push function $f(\cdot)$. Thus we can setup the following
argument at $\theta=\theta_{0}$: 
\begin{align*}
h(\theta) & =L(\theta;\mathbf{q}')p_{\theta}(f(q))/c
\end{align*}
where $c=\int L(\theta;\mathbf{q}')p_{\theta}(q')d\theta$ is a constant
for any $\theta$. This argument defines an alternate criterion for
$W(\theta)$.

At $\theta=\theta_{0}$, $p_{\theta}(f(q))=h_{\theta}(q)$ for any
$q$. This condition means that the ratio $p_{\theta}(f(q))/h_{\theta}(q)$
is one or equivalently $\log p_{\theta}(f(q))-\log h_{\theta}(q)=0$
when $\theta=\theta_{0}$. Thus, we can consider the following criterion
\[
\min_{\theta,f}\sum_{i=1}^{n}\left\{ \log h_{\theta}(q_{i})-\log L(\theta;q_{i}^{'})-\log p_{\theta}(f(q_{i}))\right\} .
\]
Although this criterion function has a simpler representation, it
may not as tractable as $W(\theta)$ in practice due to the unknown
function $f$. The transportation function $f$ belongs to a unknown
functional class. Without further restriction of this class, it is
infeasible to select the optimal $f$ in the above argument. The following
example is to show that with a Gaussian restriction, the push forward
function $f$ is attainable.

\paragraph{Example: The Linear Gaussian Case}

Suppose that the discrepancy between $\mathbf{q}'$ and $\mathbf{q}$
is additive: $\mathbf{q}'=A(\theta)+\mathbf{q}$. In this case, the
map would thus depend on the data $\mathbf{q}'$, the forward model
$A(\cdot)$, and the distribution of the observational $\mathbf{q}$.

A common further assumption is that $\mathbf{q}$ is Gaussian, zero-mean,
and independent of the model parameters, i.e., $\mathbf{q}\sim\mathcal{N}(0,\Sigma_{n})$,
leading to the following form for the likelihood function: 
\[
L(\theta;\mathbf{q}')=\frac{1}{(2\pi)^{\frac{m}{2}}(\mbox{det}\Sigma)^{\frac{1}{2}}}\exp\left(-\frac{1}{2}\left\Vert A(\theta)-\mathbf{q}'\right\Vert _{\Sigma_{n}}^{2}\right)
\]
where $\|u\|_{\Sigma}:=\|\Sigma^{-\frac{1}{2}}u\|$, for any positive
symmetric matrix $\Sigma$.

Let $A(\theta)=A\theta$. Because $\mathbf{q}$ is distributed as
a Gaussian variable and $A\theta$ is an additive term, $\mathbf{q}'$
has the Gaussian distribution as well. Thus $p_{\theta}(\mathbf{q}')$
is from Gaussian family. Without loss of generality, we can set $\mathbf{q}'-\mathbf{q}\sim\mathcal{N}(0,\Sigma_{M})$.
Then the criterion function becomes: 
\begin{align*}
 & \sum_{i=1}^{n}\left\{ \log h_{\theta}(q_{i})-\log L(\theta;q_{i}^{'})-\log p_{\theta}(f(q_{i}))\right\} \\
\propto & \left\{ -\frac{1}{2}\left\Vert A\theta-\mathbf{q}'\right\Vert _{\Sigma_{n}}^{2}+\left\Vert \theta\right\Vert _{\Sigma_{M}}^{2}\right\} \\
\propto & \left\{ -\frac{1}{2}\left(\|\theta\|_{\Sigma_{n}}^{2}+\theta^{T}(A^{T}\Sigma_{n}^{-1}A+\Sigma_{M}^{-1})\theta-2(\mathbf{q}^{'})^{T}\Sigma_{n}^{-1}A\theta\right)\right\} \\
\propto & \left\{ -\frac{1}{2}\left(\|\theta\|_{\Sigma_{n}}^{2}+\theta^{T}\Sigma\theta-2\mu^{T}\Sigma^{-1}\theta\right)\right\} 
\end{align*}
where $\Sigma=(A^{T}\Sigma_{n}^{-1}A+\Sigma_{M}^{-1})^{-1}$ is the
updated covariance matrix and $\mu=\Sigma A^{T}\Sigma_{n}^{-1}\mathbf{q}$'
is the updated mean. Then the optimal transportation mechanism is equivalent to
the optimal information updating for a posterior.

\section{Conclusion\label{sec:Conclusion}}

In this paper we considered estimation and inference in
semiparametric simultaneous equation models where the observable variables 
are separable of the unobserved errors. We consider
estimation using a model based on the independence condition that
induces a minimum distance between a joint cumulative
distribution function and the product of its marginal distribution
functions. We developed a new minimum integrated distance estimator which generalizes BW approach
by using Kantorovich's formulation of the optimal transportation problem.
This generalization is important because it allows for estimating
the weighted measure nonparametrically, and hence does not impose a priori structure on the weighting measure. 
The new estimator endogenizes the measure and 
estimates the measure and the parameters of interest simultaneously while making minimal assumptions
regarding the measure. It also provides an automatic optimal measure selection. 
Moreover, the proposed methods do not require knowledge of
the reduced form of the model. The estimation provides
greater generality by dealing with probability measures on compact
metric spaces without assuming existence of densities. 
We establish the asymptotic properties of the estimator, and show that the asymptotic statistics of empirical estimates have standard convergent results and are provided for different statistical analyses. The proposed estimator is more efficient than its competitors. In addition, we provided a tractable implementation for computing the estimator in practice.



\newpage{}

 \bibliographystyle{econometrica}


\newpage{}

\appendix
\let\normalsize\small 
\appendix 
\small

\section{Mathematical Preliminary\label{sub:Mathematical-Preliminary}}

\begin{defn*} \emph{(Commutative algebra)} A \emph{commutative algebra}
$\mathcal{A}$ is a vector space equipped with an additional associative
$+:\mathcal{A}\times\mathcal{A}\rightarrow\mathcal{A}$ and commutative
multiplication $\cdot\,:\mathcal{A}\times\mathcal{A}\rightarrow\mathcal{A}$
such that 
\[
x\cdot(y+z)=x\cdot y+x\cdot z,\quad\lambda(x\cdot y)=(\lambda x)\cdot y
\]
holds for all $x,$ $y$, $z\in\mathcal{A}$ and $\lambda\in\mathbb{R}$.
An important algebra is the space $C(\mathcal{X})$ of \emph{all continuous
functions} $f:\mathcal{X}\rightarrow\mathbb{R}$ on the compact metric
space $(\mathcal{X},\mathbf{d})$ endowed with the \emph{supremum
norm} 
\[
\|f\|_{\infty}:=\sup_{x\in\mathcal{X}}|f(x)|.
\]

\end{defn*} The following approximation theorem states that certain
sub-algebras of $C(\mathcal{X})$ generate the whole space. This is
the starting point of representing our dual distance $\widehat{W}(\theta)$.
\begin{thm} \label{thm:SWtheorem} (Stone-Weierstrass Theorem) Let $(\mathcal{X},d)$ be
a compact metric space and $\mathcal{A}\subset C(\mathcal{X})$ be
an algebra. Then $\mathcal{A}$ is dense in $C(\mathcal{X})$ if the
following conditions hold.

1. $\mathcal{A}$ does not vanish such that there is no $f(x)=0$
for any $x\in\mathcal{X}$.

2. $\mathcal{A}$ separates points such that for all $x,y\in\mathcal{X}$
with $x\neq y$ there exists an $f\in\mathcal{A}$ with $f(x)\neq f(y)$.\end{thm}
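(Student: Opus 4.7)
The plan is to adopt the standard closure-lattice strategy. Writing $\bar{\mathcal{A}}$ for the uniform closure of $\mathcal{A}$ in $C(\mathcal{X})$, I would reduce the problem to showing $\bar{\mathcal{A}} = C(\mathcal{X})$. The first routine step is to verify that $\bar{\mathcal{A}}$ is itself an algebra: continuity of addition, scalar multiplication, and pointwise multiplication in the supremum norm (with the uniform boundedness furnished by Cauchy sequences controlling the cross terms in products) transports the algebra axioms from $\mathcal{A}$ to $\bar{\mathcal{A}}$. This part I would dispatch quickly.

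The first nontrivial step is to show that $\bar{\mathcal{A}}$ is closed under taking absolute values, so that the lattice identities $\max(f,g) = \tfrac{1}{2}(f+g+|f-g|)$ and $\min(f,g) = \tfrac{1}{2}(f+g-|f-g|)$ upgrade $\bar{\mathcal{A}}$ from an algebra to a sublattice of $C(\mathcal{X})$. For $f \in \bar{\mathcal{A}}$ with $\|f\|_\infty \leq M$, I would approximate $t \mapsto |t|$ uniformly on $[-M,M]$ by polynomials $p_n$ satisfying $p_n(0) = 0$, so that $p_n \circ f$ lies in $\bar{\mathcal{A}}$ even without assuming that constant functions belong to $\mathcal{A}$. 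A classical Bernstein or Chebyshev construction, followed by subtracting the constant term (whose magnitude is bounded by the error at $t=0$), supplies such $p_n$; uniform convergence $p_n\circ f \to |f|$ on $\mathcal{X}$ then gives $|f| \in \bar{\mathcal{A}}$.

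The second nontrivial step is a two-point interpolation lemma: given distinct $x,y \in \mathcal{X}$ and arbitrary reals $a,b$, produce $h \in \mathcal{A}$ with $h(x) = a$ and $h(y) = b$. The non-vanishing condition supplies $g_1, g_2 \in \mathcal{A}$ with $g_1(x) \neq 0$ and $g_2(y) \neq 0$; the separation condition supplies $s \in \mathcal{A}$ with $s(x) \neq s(y)$. Combining these via products and linear combinations, I would construct $u,v \in \mathcal{A}$ with $u(x)=1, u(y)=0$ and $v(x)=0, v(y)=1$, so that $h := au + bv$ solves the interpolation. The computation reduces to showing that the $2\times 2$ determinant built from the evaluations at $x$ and $y$ of the auxiliary functions is nonzero, which follows by a short case analysis.

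With the lattice property and two-point interpolation in hand, the final step assembles a global approximant from local ones via the compactness of $\mathcal{X}$. For $f \in C(\mathcal{X})$ and $\epsilon > 0$, fix $x \in \mathcal{X}$ and, for each $y \in \mathcal{X}$, choose $h_{x,y} \in \bar{\mathcal{A}}$ with $h_{x,y}(x) = f(x)$ and $h_{x,y}(y) = f(y)$. Continuity yields an open neighborhood $U_y$ on which $h_{x,y} < f + \epsilon$; compactness extracts a finite subcover $U_{y_1},\ldots,U_{y_N}$, and then $h_x := \min_j h_{x,y_j} \in \bar{\mathcal{A}}$ satisfies $h_x(x) = f(x)$ and $h_x < f + \epsilon$ everywhere. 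A second compactness argument over $x$ followed by a maximum over a finite subcollection produces $h \in \bar{\mathcal{A}}$ with $\|h - f\|_\infty < \epsilon$. The principal obstacle is the absolute-value lemma under the absence of a unit in $\mathcal{A}$: one must approximate $|t|$ by polynomials with vanishing constant term, which requires either the explicit correction described above or a careful variant of the Weierstrass polynomial theorem. Everything downstream is clean geometric bookkeeping.
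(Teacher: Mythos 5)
Your proposal is a correct and complete rendering of the classical lattice-theoretic proof of Stone--Weierstrass. The paper itself supplies no proof of this theorem: it is stated in the Mathematical Preliminary appendix as a known classical result and used only as an input (together with the Moore--Aronszajn representation) to Theorem \ref{Representation-Theorem}, so there is no in-paper argument to compare against. Your write-up handles the one genuinely delicate point correctly, namely that the paper's hypotheses do not place the constant functions in $\mathcal{A}$, so the approximation of $t\mapsto|t|$ must be by polynomials with vanishing constant term (subtracting $q_n(0)$ from a Weierstrass/Bernstein approximant, at the cost of doubling the error, does exactly this), and the two-point interpolation must be derived from the nowhere-vanishing and point-separating hypotheses rather than from the presence of a unit. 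The remaining steps --- the closure $\bar{\mathcal{A}}$ being an algebra, the lattice identities $\max(f,g)=\tfrac{1}{2}(f+g+|f-g|)$ and $\min(f,g)=\tfrac{1}{2}(f+g-|f-g|)$, and the two-stage compactness argument producing first $h_x=\min_j h_{x,y_j}$ with $h_x(x)=f(x)$ and $h_x<f+\epsilon$, then $h=\max_i h_{x_i}$ with $\|h-f\|_{\infty}<\epsilon$ --- are all standard and correctly assembled. No gaps.
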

\begin{defn*} \emph{(Feature maps and Feature Spaces)} A \emph{kernel}
on $(\mathcal{X},d)$ is a function $\mathbf{k}:\mathcal{X}\times\mathcal{X}\rightarrow\mathbb{R}$.
There exists a Hilbert space $\mathcal{H}$ and a map $\Phi:\mathcal{X}\rightarrow\mathcal{H}$
with 
\[
\mathbf{k}(x,y)=\left\langle \Phi(x),\Phi(y)\right\rangle 
\]
for all $x,y\in\mathcal{X}$. $\Phi$ is called a \emph{feature map}
and $\mathcal{H}$ is called a \emph{feature space} of $\mathbf{k}$.
Note that both $\mathcal{H}$ and $\Phi$ are far from being unique.%
\footnote{However, for a given kernel, there exists a canonical feature space
which is called the reproducing kernel Hilbert space (RKHS). %
} A function $f:\mathcal{X}\rightarrow\mathbb{R}$ is induced by $\mathbf{k}$
if there exists an element $y\in\mathcal{H}$ such that $f=\left\langle y,\Phi(\cdot)\right\rangle $.
The definition of this inner product is independent of $\Phi$ and
$\mathcal{H}$. \end{defn*}

\begin{thm} \label{thm:Reproducing} (Moore-Aronszajn) Let $(\mathcal{X},\mathbf{d})$ be a compact metric space
and $\mathbf{k}$ is a kernel on $\mathcal{X}$. Suppose $\mathbf{k}$
induces a dense set of $C(\mathcal{X})$. Then for all compact and
mutually disjoint subsets $\mathcal{X}_{1},\dots,\mathcal{X}_{n}\subset\mathcal{X}$,
all $\alpha_{1},\dots,\alpha_{n}\in\mathbb{R}$ and any $\epsilon>0$,
there exists a function $g$ induced by $\mathbf{k}$ with $\|g\|_{\infty}\leq\max_{i}|\alpha_{i}|+\epsilon$
such that 
\[
\left\Vert g_{|\cup_{i=1}^{n}\mathcal{X}_{i}}-\sum_{i=1}^{n}\alpha_{i}\mathbf{1}\{\mathcal{X}_{i}\}\right\Vert _{\infty}\leq\epsilon,
\]
where $g_{|\mathcal{X}}$ denotes the restriction of $g$ to $\mathcal{X}$.
\end{thm}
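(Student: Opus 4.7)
The plan is to approximate the target step function $\sum_{i=1}^{n}\alpha_i\mathbf{1}\{\mathcal{X}_i\}$ first by a genuine continuous function on $\mathcal{X}$ with the right sup bound, and then by a kernel-induced function via the density assumption. Concretely, I would proceed in three steps.

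\textbf{Step 1: Construct a continuous interpolant.} The sets $\mathcal{X}_1,\dots,\mathcal{X}_n$ are compact and mutually disjoint in the compact metric space $(\mathcal{X},\mathbf{d})$, so they are at strictly positive distance from one another. Hence $\bigcup_i \mathcal{X}_i$ is a closed subset of $\mathcal{X}$ and the function
\[
\tilde g(x):=\sum_{i=1}^{n}\alpha_i\,\mathbf{1}\{x\in\mathcal{X}_i\}
\]
is well defined and continuous on $\bigcup_i \mathcal{X}_i$ (each $\mathcal{X}_i$ is relatively open in the union). By the Tietze extension theorem (in its sup-norm-preserving form), $\tilde g$ extends to a continuous function on all of $\mathcal{X}$, still denoted $\tilde g$, with $\|\tilde g\|_\infty=\max_i|\alpha_i|$.

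\textbf{Step 2: Approximate by a kernel-induced function.} Since $\mathbf{k}$ represents $C(\mathcal{X})$ (i.e.\ the class of functions induced by $\mathbf{k}$ is dense in $C(\mathcal{X})$ in the supremum norm), for the given $\epsilon>0$ there is a function $g$ induced by $\mathbf{k}$ with
\[
\|\tilde g-g\|_\infty\leq\epsilon.
\]
Restricting to $\bigcup_i \mathcal{X}_i$, where $\tilde g$ agrees exactly with $\sum_i\alpha_i\mathbf{1}\{\mathcal{X}_i\}$, gives
\[
\bigl\|g_{|\cup_i \mathcal{X}_i}-\textstyle\sum_i\alpha_i\mathbf{1}\{\mathcal{X}_i\}\bigr\|_\infty \leq \|\tilde g-g\|_\infty\leq\epsilon.
\]

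\textbf{Step 3: Control the global sup norm.} The triangle inequality gives
\[
\|g\|_\infty\leq\|\tilde g\|_\infty+\|\tilde g-g\|_\infty\leq\max_i|\alpha_i|+\epsilon,
\]
which is the required bound.

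The only step that is not purely routine is Step 1: one must make sure that the continuous extension $\tilde g$ can be chosen with sup norm equal to $\max_i|\alpha_i|$, not just bounded by it with some inflation. This is the standard bounded-version of Tietze (one can truncate any continuous extension by $\max(-M,\min(M,\cdot))$ with $M=\max_i|\alpha_i|$, which preserves continuity and the prescribed values on the $\mathcal{X}_i$). Once Step 1 is in place, the rest is just the density assumption plus the triangle inequality. Note that no RKHS structure or Moore--Aronszajn machinery is actually needed for the statement itself; it is purely a uniform-approximation consequence of the density of kernel-induced functions in $C(\mathcal{X})$, which is what the earlier Stone--Weierstrass-based Theorem~\ref{Representation-Theorem} provides.
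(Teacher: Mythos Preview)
Your proof is correct, and there is nothing to compare it against: the paper states this theorem as a mathematical preliminary (alongside Stone--Weierstrass) without supplying a proof. Your three-step argument---Tietze extension of the locally constant function on $\bigcup_i\mathcal{X}_i$ to a continuous $\tilde g$ with $\|\tilde g\|_\infty=\max_i|\alpha_i|$, then uniform approximation of $\tilde g$ by a kernel-induced $g$ via the density hypothesis, then the triangle inequality for the sup-norm bound on $g$---is the standard way to establish this result and is sound. Your remark that the conclusion is purely a uniform-approximation fact, requiring no genuine Moore--Aronszajn/RKHS structure, is also accurate; the label in the paper is somewhat misleading in that respect.
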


\section{Probabilistic Preliminary\label{sub:Prob-Theo-Preliminary}}

\begin{defn*}
\emph{(Weak topology, weak convergence) Weak topology} on a space
of probability measures $\mathscr{P}:=\{\mathcal{P}:\,\mathcal{P}\mbox{ is a probability measure}\}$
is the weakest topology such that the map $\mathcal{P}\rightarrow\int_{\mathcal{Q}}fd\mathcal{P}$
is continuous for all $\mathcal{P}\in C_{b}(\mathcal{Q})$ where $C_{b}(\mathcal{Q})$
is the \emph{space of all bounded continuous functions}. A sequence
of $\mathcal{P}_{n}$ is said to \emph{converge weakly} to $\mathcal{P}$,
$\mathcal{P}_{n}\rightsquigarrow\mathcal{P}$, if and only if $\int_{\mathcal{Q}}fd\mathcal{P}_{n}\rightarrow\int_{\mathcal{Q}}fd\mathcal{P}$
for every $f\in C_{b}(\mathcal{Q})$. \end{defn*} A metric $\mathbf{d}(\cdot,\cdot)$
on $\mathscr{P}$ is said to \emph{metrize} the weak topology if the
topology induced by $\mathbf{d}(\cdot,\cdot)$ coincides with the
weak topology, which is defined as follows: If $\{\mathcal{P}_{1},\mathcal{P}_{2},\dots\}\in\mathscr{P}$
and 
\[
\mathcal{P}_{n}\rightsquigarrow\mathcal{P}\,\mbox{ is equivalent to }\mathbf{d}(\mathcal{P}_{n},\mathcal{P})\rightarrow0\;\mbox{as }n\rightarrow\infty,
\]
then the topology induced by $\mathbf{d}(\cdot,\cdot)$ coincides
with the weak topology. \begin{defn*} \emph{(Compact metric space})
We denote the \emph{closed ball} with radius $r$ and center $x$
by $B_{\mathbf{d}}(\mathcal{P},r):=\{\mathcal{P}'\in\mathscr{P}:\mathbf{d}(\mathcal{P},\mathcal{P}')\leq r\}$.
The \emph{covering numbers} of $\mathscr{P}$ are defined by 
\[
\mathfrak{N}((\mathscr{P},\mathbf{d}),r):=\min\left\{ n\in\mathbb{N}\cup\infty:\exists\mathcal{P}_{1},\dots,\mathcal{P}_{n}\text{ with }\mathscr{P}\subset\cup_{i=1}^{n}B_{\mathbf{d}}(\mathcal{P}_{i},r)\right\} 
\]
for all $r>0$. The space $(\mathscr{P},\mathbf{d})$ is \emph{pre-compact}
if and only if $ $$\mathfrak{N}((\mathscr{P},\mathbf{d}),r)$ is
\emph{finite} for all $r>0$. If the space $(\mathscr{P},\mathbf{d})$
is \emph{complete}, then $\mathscr{P}$ is \emph{compact} if and only
if $\mathscr{P}$ is \emph{pre-compact}. \end{defn*} Based on this
argument, throughout the paper, we only consider pre-compact space
$(\mathscr{P},\mathbf{d})$ over Hilbert spaces (complete spaces).
In other words, the covering numbers of $\mathscr{P}$ are finite
and distance $\mathbf{d}(\cdot,\cdot)$ metrizes $\mathscr{P}$ by
a Hilbertian type metric. The space $(\mathscr{P},\mathbf{d})$ is
a \emph{compact metric space}.

\section{Proof of Main Theorems}

\subsection{\label{sub:Proof-of-Theorem WHdistance}Proof of Theorem \ref{thm:KernelW-distance}}

\begin{proof} Consider the family 
\[
\mathscr{P}(\mathcal{Q}):=\left\{ \mathcal{P}:\int_{\mathcal{Q}}\sqrt{\mathbf{k}(q,q)}d\mathcal{P}(q)<\infty\right\} .
\]
Let $T:\mathcal{H}\rightarrow\mathbb{R}$ be a linear functional defined
as $ $$T[h]:=\int_{\mathcal{Q}}h(q)d\mathcal{P}(q)$ with 
\[
\|T\|=\sup_{h\in\mathcal{H},h\neq0}\frac{|T[h]|}{\|h\|_{\mathcal{H}}}.
\]
Now we have 
\begin{align*}
|T[h]|=\left|\int_{\mathcal{Q}}hd\mathcal{P}(q)\right|\leq\int_{\mathcal{Q}}|h|d\mathcal{P}(q) & =\int_{\mathcal{Q}}|\left\langle h,\mathbf{k}(\cdot,q)\right\rangle _{\mathcal{H}}|d\mathcal{P}(q)\\
 & \leq\int_{\mathcal{Q}}\sqrt{\mathbf{k}(q,q)}\|h\|_{\mathcal{H}}d\mathcal{P}(q).
\end{align*}
It implies $\|T\|<\infty$, for any $\mathcal{P}\in\mathscr{P}(\mathcal{Q})$.
Then we know $T$ is a bounded linear functional on $\mathcal{H}$.
By Riesz representation theorem \citep[Theorem II.4,][]{ReedSimon1980},
for each $\mathcal{P}\in\mathbb{P}$, there exists a unique $\lambda_{\mathcal{P}}\in\mathcal{H}$
such that $T[h]=\left\langle h,\lambda_{\mathcal{P}}\right\rangle _{\mathcal{H}}$,
for any $h\in\mathcal{H}$.

Let $h=\mathbf{k}(\cdot,u)\in\mathcal{B}_{\mathcal{H}}$ for some
$u\in\mathcal{Q}$. Then 
\[
T[\mathbf{k}(\cdot,u)]=\mathcal{P}\mathbf{k}=\left\langle \mathbf{k}(\cdot,u),\lambda_{\mathcal{P}}\right\rangle _{\mathcal{H}}=\lambda_{\mathcal{P}}(u).
\]
Replace $H_{\theta}(x,\varepsilon)$ and $P_{\theta}(x,\varepsilon)$
with arbitrary $\mathfrak{P}\in\mathcal{P}$ in the expression, then
we have 
\begin{align*}
\left|\int_{\mathcal{Q}}hdH_{\theta}(q)-\int_{\mathcal{Q}}hdP_{\theta}(q)\right|=\left|\left\langle h,\lambda_{H_{\theta}}\right\rangle _{\mathcal{H}}-\left\langle h,\lambda_{P_{\theta}}\right\rangle _{\mathcal{H}}\right| & =\left|\left\langle h,\lambda_{H_{\theta}}-\lambda_{P_{\theta}}\right\rangle _{\mathcal{H}}\right|.
\end{align*}
This implies $W_{\mathcal{H}}(\theta)$ define a norm 
\begin{align*}
W_{\mathcal{H}}(\theta):= & \sup_{\|h\|_{\mathcal{H}}\leq1}\left|\int_{\mathcal{Q}}hdH_{\theta}(q)-\int_{\mathcal{Q}}hdP_{\theta}(q)\right|\\
= & \sup_{\mathbf{k}\in\mathcal{B}_{\mathcal{H}}}\left|\left\langle \mathbf{k}(\cdot,u),\lambda_{H_{\theta}}\right\rangle _{\mathcal{H}}-\left\langle \mathbf{k}(\cdot,u),\lambda_{P_{\theta}}\right\rangle _{\mathcal{H}}\right|\\
= & \sup_{u}\left|\lambda_{H_{\theta}}(u)-\lambda_{P_{\theta}}(u)\right|=\left\Vert \lambda_{H_{\theta}}-\lambda_{P_{\theta}}\right\Vert _{\mathcal{H}}=\|H_{\theta}\mathbf{k}-P_{\theta}\mathbf{k}\|_{\mathcal{H}}
\end{align*}
The second equality uses the fact that any $\{h:\|h\|_{\mathcal{H}}\leq1\}$
can be represented by $\mathbf{k}\in\mathcal{B}_{\mathcal{H}}$ \end{proof}

\subsection{\label{sub:Proof-of-Theorem EquivMetric}Proof of Theorem \ref{Thm:EquivalentMetric}}

\begin{proof} When $(\mathcal{Q},\rho)$ is separable, we replace
the general distance function $\mathbf{d}$ in \eqref{eq:Kantonovich}
with $\mathbf{d}_{\mathbf{k}}$, then $W(\theta)$ has the following
form \citep[p.420,][]{Dudley2002} 
\[
W(\theta):=\inf_{\gamma\in\Gamma(H_{\theta},P_{\theta})}\left\{ \int\mathbf{d}_{\mathbf{k}}(q,q')d\gamma(q,q'):\: q\sim H_{\theta}(x,\varepsilon)\mbox{ and }q'\sim P_{\theta}(x,\varepsilon)\right\} .
\]
Condition C10 means that $(\mathcal{Q},\rho)$ is bounded.

Lower bound: For any $\gamma\in\Gamma(H_{\theta},P_{\theta})$, we
have 
\begin{align*}
\left|\int_{\mathcal{Q}}fd(H_{\theta}-P_{\theta})\right|= & \left|\int_{\mathcal{Q}\times\mathcal{Q}}\left(f(q)-f(q')\right)d\gamma(q,q')\right|\\
\leq & \int_{\mathcal{Q}\times\mathcal{Q}}\left|f(q)-f(q')\right|d\gamma(q,q')\\
= & \int_{\mathcal{Q}\times\mathcal{Q}}\left|\left\langle f,\,\mathbf{k}(\cdot,q)-\mathbf{k}(\cdot,q')\right\rangle _{\mathcal{H}}\right|d\gamma(q,q')\\
\leq & \|f\|_{\mathcal{H}}\int_{\mathcal{Q}\times\mathcal{Q}}\left\Vert \mathbf{k}(\cdot,q)-\mathbf{k}(\cdot,q')\right\Vert _{\mathcal{H}}d\gamma(q,q').
\end{align*}
The first equality uses the property of marginal probability of $\gamma(q,q')$.
After taking supremum over $f\in\mathcal{B}_{\mathcal{H}}$, we have
\[
W_{\mathcal{H}}(\theta)=\sup_{f\in\mathcal{B}_{\mathcal{H}}}\left|\int_{\mathcal{Q}}fd(H_{\theta}-P_{\theta})\right|\leq\int_{\mathcal{Q}\times\mathcal{Q}}\mathbf{d}_{\mathbf{k}}(q,q')d\gamma(q,q')
\]
for any $\gamma\in\Gamma(H_{\theta},P_{\theta})$. Thus after taking
the infimum over $\gamma$, we have $W_{\mathcal{H}}(\theta)\leq W(\theta)$.

Upper bound: $W(\theta)$ distance is bounded as follows: 
\begin{align*}
W(\theta)= & \inf_{\gamma\in\Gamma(H_{\theta},P_{\theta})}\int_{\mathcal{Q}\times\mathcal{Q}}\|\mathbf{k}(\cdot,q)-\mathbf{k}(\cdot,q')\|_{\mathcal{H}}d\gamma(q,q')\\
\leq & \int_{\mathcal{Q}\times\mathcal{Q}}\|\mathbf{k}(\cdot,q)-\mathbf{k}(\cdot,q')\|_{\mathcal{H}}dH_{\theta}(q)dP_{\theta}(q')\\
\overset{(1)}{\leq} & \left[\int_{\mathcal{Q}\times\mathcal{Q}}\|\mathbf{k}(\cdot,q)-\mathbf{k}(\cdot,q')\|_{\mathcal{H}}^{2}dH_{\theta}(q)dP_{\theta}(q')\right]^{\frac{1}{2}}\\
\overset{(2)}{\leq} & \left[\int_{\mathcal{Q}}\mathbf{k}(q,q)d(H_{\theta}+P_{\theta})(q)-2\int_{\mathcal{Q}\times\mathcal{Q}}\mathbf{k}(q,q')H_{\theta}(q)dP_{\theta}(q')\right]^{\frac{1}{2}}\\
\overset{(3)}{\leq} & \sqrt{2C+W_{\mathcal{H}}(\theta)}
\end{align*}
$\overset{(1)}{\leq}$ follows Jensen's inequality. $\overset{(2)}{\leq}$
follows 
\begin{align*}
 & \left[\int_{\mathcal{Q}\times\mathcal{Q}}\left\langle \mathbf{k}(\cdot,q)-\mathbf{k}(\cdot,q')\,,\,\mathbf{k}(\cdot,q)-\mathbf{k}(\cdot,q')\right\rangle _{\mathcal{H}}dH_{\theta}(q)dP_{\theta}(q')\right]^{\frac{1}{2}}\\
= & \left[\int_{\mathcal{Q}}\mathbf{k}(q,q)dH_{\theta}(q)+\int_{\mathcal{Q}}\mathbf{k}(q',q')dP_{\theta}(q')-2\int_{\mathcal{Q}\times\mathcal{Q}}\mathbf{k}(q,q')H_{\theta}(q)dP_{\theta}(q')\right]^{\frac{1}{2}}\\
\leq & \left[\int_{\mathcal{Q}}\mathbf{k}(q,q)d(H_{\theta}+P_{\theta})(q)-2\int_{\mathcal{Q}\times\mathcal{Q}}\mathbf{k}(q,q')H_{\theta}(q)dP_{\theta}(q')\right]^{\frac{1}{2}}.
\end{align*}
$\overset{(3)}{\leq}$ follows $ $ 
\begin{align*}
 & \int_{\mathcal{Q}}\mathbf{k}(q,q)d(H_{\theta}+P_{\theta})(q)-2\int_{\mathcal{Q}\times\mathcal{Q}}\mathbf{k}(q,q')dH_{\theta}(q)dP_{\theta}(q')\\
\leq & \int_{\mathcal{Q}}\mathbf{k}(q,q)d(H_{\theta}+P_{\theta})(q)+\left[\int_{\mathcal{Q}\times\mathcal{Q}}\mathbf{k}(q,q')(dH_{\theta}(q)-dP_{\theta}(q'))^{2}\right]\\
\leq & H_{\theta}\mathbf{k}+P_{\theta}\mathbf{k}+\left[\int_{\mathcal{Q}}\mathbf{k}(q,q)d(H_{\theta}-P_{\theta})(q)\right]^{2}\\
\leq & 2C+\left[\sup_{f\in\mathcal{B}_{\mathcal{H}}}\int_{\mathcal{Q}}fd(H_{\theta}-P_{\theta})\right]^{2}=2C+W_{\mathcal{H}}(\theta)
\end{align*}
where we set $C=\max\{\|H_{\theta}\mathbf{k}\|_{\mathcal{H}},\|P_{\theta}\mathbf{k}\|_{\mathcal{H}}\}$.
\end{proof}

\subsection{\label{sub:Proof-of-Theorem Bias}Proof of Theorem \ref{Thm:Bias}}

\begin{proof} In order to maintain simple expressions, without loss
of generality the kernel functions $\mathbf{k}(q,q_{i})$ is expressed
by $f(q_{i})=\mathbf{k}(q,q_{i})$ for any $q$.

The upper bound of $\widehat{W}_{\mathcal{H}}(\theta)-W_{\mathcal{H}}(\theta)$
follows 
\begin{align*}
\left|\widehat{W}_{\mathcal{H}}(\theta)-W_{\mathcal{H}}(\theta)\right|= & \left|\sup_{f\in\mathcal{B}_{\mathcal{H}}}H_{\theta}f-P_{\theta}f-\sup_{f\in\mathcal{B}_{\mathcal{H}}}\left(\frac{1}{m}\sum_{i=1}^{m}f(q_{i})-\frac{1}{n}\sum_{i=1}^{n}f(q_{i}^{'})\right)\right|\\
\leq & \underset{\Delta(H_{\theta},P_{\theta},\mathbf{q},\mathbf{q}')}{\underbrace{\sup_{f\in\mathcal{B}_{\mathcal{H}}}\left|H_{\theta}f-P_{\theta}f-\frac{1}{m}\sum_{i=1}^{m}f(q_{i})+\frac{1}{n}\sum_{i=1}^{n}f(q_{i}^{'})\right|}}.
\end{align*}
We can provide an upper bound on the difference between $\Delta(H_{\theta},P_{\theta},Q,Q')$
and its expectation. Changing either of $(x_{i},\varepsilon_{i})\sim H_{\theta}$
or $(x_{i}^{'},\varepsilon_{i}^{'})\sim P_{\theta}$ in $\Delta(H_{\theta},P_{\theta},Q,Q')$
results in changes in magnitude of at most $2C_{\mathbf{k}}^{1/2}/m$
or $2C_{\mathbf{k}}^{1/2}/n$, respectively. We can then apply Theorem
\ref{thm:McDiarmid's-inequality} 
\begin{equation}
\Pr\left\{ (\Delta(H_{\theta},P_{\theta},\mathbf{q},\mathbf{q}')-\mathbb{E}[\Delta(H_{\theta},P_{\theta},\mathbf{q},\mathbf{q}')])>\epsilon\right\} <\exp\left(-\frac{\epsilon^{2}mn}{2C_{\mathbf{k}}(m+n)}\right)\label{eq:bias-bounded-intermediate}
\end{equation}
where the denominator in the exponent comes from 
\[
m\left(2C_{\mathbf{k}}^{\frac{1}{2}}/m\right)^{2}+n\left(2C_{\mathbf{k}}^{\frac{1}{2}}/n\right)^{2}=4C_{\mathbf{k}}\left(\frac{1}{m}+\frac{1}{n}\right)=4C_{\mathbf{k}}\frac{n+m}{mn}.
\]

We then apply the symmetrization technique again to attain the upper
bound of $\Delta(H_{\theta},P_{\theta},\mathbf{q},\mathbf{q}')$.
The procedure called the ghost sample approach, i.e. a second set
of observations drawn from the same distribution, follows \citet[p.108,][]{vanderVaartWellner1996}.
Denote $\tilde{Q}\sim H_{\theta}$ an i.i.d sample of size $m$ drawn
independently of $Q$, similarly denote $\tilde{Q}\sim P_{\theta}$
in the same way. 
\begin{align*}
 & \mathbb{E}[\Delta(H_{\theta},P_{\theta},\mathbf{q},\mathbf{q}')]\\
= & \mathbb{E}\sup_{f\in\mathcal{B}_{\mathcal{H}}}\left|H_{\theta}f-P_{\theta}f-\frac{1}{m}\sum_{i=1}^{m}f(q_{i})+\frac{1}{n}\sum_{i=1}^{n}f(q_{i}^{'})\right|\\
\overset{(a)}{=} & \mathbb{E}\sup_{f\in\mathcal{B}_{\mathcal{H}}}\left|\mathbb{E}\left(\frac{1}{m}\sum_{i=1}^{m}f(\tilde{q}_{i})\right)-\mathbb{E}\left(\frac{1}{n}\sum_{i=1}^{n}f(\tilde{q}_{i}^{'})\right)-\frac{1}{m}\sum_{i=1}^{m}f(q_{i})+\frac{1}{n}\sum_{i=1}^{n}f(q_{i}^{'})\right|\\
\overset{(b)}{\leq} & \mathbb{E}\sup_{f\in\mathcal{B}_{\mathcal{H}}}\left|\frac{1}{m}\sum_{i=1}^{m}f(\tilde{q}_{i})-\frac{1}{n}\sum_{i=1}^{n}f(\tilde{q}_{i}^{'})-\frac{1}{m}\sum_{i=1}^{m}f(q_{i})+\frac{1}{n}\sum_{i=1}^{n}f(q_{i}^{'})\right|\\
\overset{(c)}{=} & \mathbb{E}\sup_{f\in\mathcal{B}_{\mathcal{H}}}\left|\frac{1}{m}\sum_{i=1}^{m}\sigma_{i}\left(f(\tilde{q}_{i})-f(q_{i})\right)+\frac{1}{n}\sum_{i=1}^{n}\sigma_{i}\left(f(\tilde{q}_{i}^{'})-f(q_{i}^{'})\right)\right|\\
\overset{(d)}{\leq} & \mathbb{E}\sup_{f\in\mathcal{B}_{\mathcal{H}}}\left|\frac{1}{m}\sum_{i=1}^{m}\sigma_{i}\left(f(\tilde{q}_{i})-f(q_{i})\right)\right|+\mathbb{E}\sup_{f\in\mathcal{B}_{\mathcal{H}}}\left|\frac{1}{n}\sum_{i=1}^{n}\sigma_{i}\left(f(\tilde{q}_{i}^{'})-f(q_{i}^{'})\right)\right|\\
\overset{(e)}{\leq} & 2\left[R_{m}(\mathcal{B}_{\mathcal{H}},Q,H_{\theta})+R_{n}(\mathcal{B}_{\mathcal{H}},Q',P_{\theta})\right]\overset{(f)}{\leq}2\left[(C_{\mathbf{k}}/m)^{\frac{1}{2}}+(C_{\mathbf{k}}/n)^{\frac{1}{2}}\right].
\end{align*}
$\overset{(a)}{=}$ follows the ghost sample argument, $\overset{(b)}{\leq}$
follows Jensen's inequality, $\overset{(c)}{=}$ follows the symmetrization
technique, $\overset{(d)}{\leq}$ follows the triangle inequality,
$\overset{(e)}{\leq}$ and $\overset{(f)}{\leq}$ follow Theorem \ref{thm:Rademacher-symmetrization}.

Substitute above result into \eqref{eq:bias-bounded-intermediate},
we have 
\[
\Pr\left\{ \Delta(H_{\theta},P_{\theta},\mathbf{q},\mathbf{q}')-2\left[(C_{\mathbf{k}}/m)^{\frac{1}{2}}+(C_{\mathbf{k}}/n)^{\frac{1}{2}}\right]>\epsilon\right\} \leq\exp\left(-\frac{\epsilon^{2}mn}{2C_{\mathbf{k}}(m+n)}\right).
\]
Substitute this bound into the upper bound of $\left|\widehat{W}_{\mathcal{H}}(\theta)-W_{\mathcal{H}}(\theta)\right|$,
then we get the result. \end{proof}

\subsection{\label{sub:Proof-of-Theorem-Unbias}Proof of Theorem \ref{thm:Unbias}}

\begin{proof} If $\Theta$ is compact, take $\mathcal{K}=\Theta$.
Apply the bound in \citet[p. 25][]{Hoeffding1963} to Theorem \ref{Thm:Bias}:
\[
\Pr\left\{ \widehat{S}_{\mathcal{H}}(\theta)-S_{\mathcal{H}}(\theta)>\epsilon\right\} \leq\exp\left(\frac{-\epsilon^{2}n}{8C_{\mathbf{k}}^{2}}\right).
\]
Note the $S_{\mathcal{H}}(\theta_{0})=0$. It follows that 
\[
\Pr\{\widehat{S}_{\mathcal{H}}(\theta^{*})\geq\epsilon\}=\Pr\left\{ n^{-2}\sum_{i\neq j}h(q_{i},q_{j})\geq\epsilon\right\} \leq\exp\left(\frac{-\epsilon^{2}n}{8C_{\mathbf{k}}^{2}}\right).
\]
Note that if $\epsilon\rightarrow0$ and $\epsilon^{2}n\rightarrow\infty$,
$\exp\left(-\epsilon^{2}n/8C_{\mathbf{k}}^{2}\right)\rightarrow0$.

If $\Theta$ is not compact, we do the following step. For fixed $\epsilon>0$,
take $B_{\theta_{0}}$ to be the open interval centered at $\theta_{0}$
of length $2\epsilon$. Since $\mathcal{K}\setminus B_{\theta_{0}}$
is a compact set, and $\{B_{\theta}:\theta\in\mathcal{K}\setminus B_{\theta_{0}}\}$
is an open cover, we may extract a finite sub-cover, say, $B_{\theta_{1}},\ldots,B_{\theta_{k}}$.
For notational simplicity, rename $\mathcal{K}^{c}$ and these sets
as $\Theta_{1},\ldots,\Theta_{m}$, so that $\Theta=B_{\theta_{0}}\cup(\bigcup_{s=1}^{m}\Theta_{s})$
and $h(\theta|\Theta_{s})>0$.

Write $c_{s}=h(\theta|\Theta_{s})$. Then by Theorem \ref{Thm:Bias},
at this $\theta$ 
\[
\frac{1}{n^{2}}\sum_{i\neq j}^{n}h(q_{i},q_{j})\to c_{s},\quad\text{with probability~1 for each \ensuremath{s}}.
\]
Let $A_{s}$ denote the set of data sequences for which this convergence
holds and set $A=\bigcap_{s=1}^{m}A_{s}$. Then $\Pr(A)=1$ and $n^{-2}\sum_{i\neq j}h(q_{i},q_{j})\to c_{s}>0$
for all $\mathbf{q}=(q_{1},q_{2},\ldots,)\in A$. If ``i.o.'' stands
for ``infinitely often,'' then we have: 
\begin{align*}
\bigl\{\mathbf{q}:\limsup_{n\to\infty}|\theta^{*}(q_{1},\ldots,q_{n})-\theta_{0}|\geq\epsilon\bigr\} & \subseteq\bigcup_{s=1}^{m}\bigl\{\mathbf{q}:\theta^{*}(q_{1},\ldots,q_{n})\in\Theta_{s}\text{ i.o.}\bigr\}\\
 & \subseteq\bigcup_{s=1}^{m}\Bigl\{\mathbf{q}:\inf_{\theta\in\Theta_{s}}\frac{1}{n}\sum_{i\neq j}h(q_{i},q_{j})>0\text{ i.o.}\Bigr\}\\
 & \subseteq\bigcup_{s=1}^{m}\Bigr\{\mathbf{q}:\hat{h}(\theta^{*}|\Theta_{s})>0\text{ i.o.}\Bigr\}\\
 & \subseteq\bigcup_{s=1}^{m}A_{s}^{c}.
\end{align*}
Since the last set is $A^{c}$ and $\Pr(A^{c})=0$, the result \eqref{eq:mle.conv}
follows. \end{proof}

\subsection{\label{sub:Proof-of-Theorem-WeakConvergence}Proof of Theorem \ref{thm:WeakConvergence}}

\begin{proof} The proof follows the procedure in \citet[Section 5.5.2,][]{Serfling1980}.
Under $\theta_{0}$, by the definition of $\tilde{\mathbf{k}}(x_{i},x_{j})$,
we have 
\begin{equation}
\begin{aligned}\mathbb{E}_{x}\tilde{\mathbf{k}}(x,x_{j}): & =\mathbb{E}_{x}\mathbf{k}(x,x_{j})-\mathbb{E}_{x}\left[\mathbb{E}_{x'}\mathbf{k}(x,x')\right]-\mathbb{E}_{x}\mathbf{k}(x,x_{j})+\mathbb{E}_{x,x'}\mathbf{k}(x,x')\\
 & =-\mathbb{E}_{x}\left[\mathbb{E}_{x'}\mathbf{k}(x,x')\right]+\mathbb{E}_{x,x'}\mathbf{k}(x,x')=0.
\end{aligned}
\label{eq:Proof-of-ChiSq-0}
\end{equation}
Step 1: To prove that the kernel $\tilde{\mathbf{k}}(x_{i},x_{j})$
function has the following form: 
\begin{equation}
\tilde{\mathbf{k}}(x_{i},x_{j})=\sum_{l=1}^{\infty}\lambda_{l}^{(1)}\psi_{l}(x_{i})\psi_{l}(x_{j}),\label{eq:Proof-ChiSquare-expansion}
\end{equation}
where $\lambda_{l}^{(1)}$ and $\psi_{l}$ are eigenvalues and basis
function of the eigenvalue equation 
\[
\int_{\mathcal{Q}}\tilde{\mathbf{k}}(x,x')\psi_{l}(x)dF(x)=\lambda_{l}^{(2)}\psi_{l}(q').
\]

Note that a Hilbert subspace endowed with an inner product over $\tilde{\mathbf{k}}(\cdot,\cdot)$
such that $\langle f(\cdot),\tilde{\mathbf{k}}(\cdot,x')\rangle=f(x')$
for all $f\in\mathcal{B}_{\mathcal{H}}$ and $\langle\tilde{\mathbf{k}}(\cdot,x),\tilde{\mathbf{k}}(\cdot,x')\rangle=\tilde{\mathbf{k}}(x,x')$.
Let $\psi_{l}$ be basis for $\mathcal{B}_{\mathcal{H}}$ and $B_{l}$
be i.i.d. $\mathcal{N}(0,\sigma_{l}^{2})$ with $\sigma_{l}\downarrow0$
as $l\rightarrow\infty$. We will construct Karhunen-Loeve expansion
in the following fashion. Suppose for any $x\in\mathcal{X}$, 
\[
f(x)=\sum_{l=1}^{\infty}B_{l}\psi_{l}(x).
\]
Note that $\mathbb{E}[f(x)]=0$ while the covariance kernel function
is 
\begin{align*}
\tilde{\mathbf{k}}(x,x')= & \mathbb{E}_{B}\left[\sum_{l=1}^{\infty}B_{l}\psi_{l}(x)B_{l}\psi_{l}(x')\right]\\
= & \sum_{l=1}^{\infty}\sigma_{l}^{2}\psi_{l}(x)\psi_{l}(x')
\end{align*}
The covariance function $\tilde{\mathbf{k}}$ of zero mean Gaussian
vectors completely characterizes the space $\mathcal{B}_{\mathcal{H}}$.

On the other hand, suppose we fix $q$. Then from Theorem \ref{Representation-Theorem},
for continuous function, $\tilde{\mathbf{k}}(x,\cdot)=\sum_{i=1}^{\infty}\alpha_{i}(x)$
for some basis functions $\alpha_{i}$ uniformly. Now fix $q'$ and
we have 
\[
\tilde{\mathbf{k}}(x,x')=\sum_{i=1}^{\infty}\alpha_{i}(x)\sum_{j=1}^{\infty}\beta_{j}(x')=\sum_{i,j=1}^{\infty}\alpha_{i}(x)\beta_{j}(x')
\]
Again, $\beta_{j}$ are basis functions. For finite numbers of the
basis $\alpha_{1},\cdots,\alpha_{L}$ and $\beta_{1},\cdots,\beta_{L}$,
it can be representable as a linear combination of orthogonal basis
$\psi$ using the Gram-Schmidt orthogonalization. So we have 
\[
\tilde{\mathbf{k}}(x,x')=\lim_{L\rightarrow\infty}\sum_{l=1}^{L}\lambda_{l}^{(1)}\psi_{l}(x)\psi_{l}(x').
\]
For continuous centered kernel $\tilde{\mathbf{k}}$, define linear
operator $\langle\cdot,\tilde{\mathbf{k}}\rangle:\mathcal{B}_{\mathcal{H}}\to\mathcal{B}_{\mathcal{H}}$
as 
\[
\langle f(\cdot),\tilde{\mathbf{k}}(\cdot,x')\rangle=\int\tilde{\mathbf{k}}(x,x')f(x)\; dF(x)=f(x').
\]
This is a compact self-adjoint operator%
\footnote{An operator $T$ is compact if the closure of $T(\mbox{close ball})$
is compact and it is self-adjoint if $\langle Tf,g\rangle=\langle f,Tg\rangle$. %
} which yields unique countable eigenvalues $\lambda_{i}$ and orthonormal
eigenfunctions $\psi_{i}$ such that%
\footnote{This eigenequation is called a Fredholm equation of the first kind.
Given covariance function, there is a numerical technique for estimating
$\lambda_{i}$ and $\psi_{i}$ numerically.%
} 
\begin{equation}
\begin{array}{cc}
\int\tilde{\mathbf{k}}(x,x')\psi_{l}(x)dF(x)=\lambda_{l}^{(1)}\psi_{l}(x'),\\
\int\psi_{i}(x)\psi_{j}(x)dF(x)=\delta_{ij}=\begin{cases}
1 & \mbox{if }i=j\\
0 & \mbox{otherwise}
\end{cases}
\end{array}\label{eq:Proof-ChiSquare-1}
\end{equation}
with $\lambda_{\infty}=0$. We will order them such that $\lambda_{0}>\lambda_{1}>\lambda_{2}>\cdots$.
Hence identifying $\lambda_{i}=\sigma_{i}^{2}$. Similarly, we can
obtain the results for $\tilde{\mathbf{k}}(\varepsilon_{i},\varepsilon_{j})=\sum_{l=1}^{\infty}\lambda_{l}^{(2)}\psi_{l}(\varepsilon_{i})\psi_{l}(\varepsilon_{j})$.

Step 2: To prove that 
\begin{align*}
n\widehat{S}_{\mathcal{H}}(\theta^{*}):=\frac{1}{n}\sum_{i\neq j}^{n}h(q_{i},q_{j})= & \frac{1}{n}\sum_{i\neq j}^{n}\left[\tilde{\mathbf{k}}(x_{i},x_{j})+\tilde{\mathbf{k}}(\varepsilon_{i},\varepsilon_{j})-\tilde{\mathbf{k}}(x_{i},\varepsilon_{j})-\tilde{\mathbf{k}}(\varepsilon_{i},x_{j})\right]\\
\rightsquigarrow & \sum_{l=1}^{\infty}\lambda_{l}\left[z_{l}^{2}-1\right].
\end{align*}

Firstly, consider the limit of $n^{-1}\sum_{i\neq j}^{n}\tilde{\mathbf{k}}(x_{i},x_{j})$.
With the result in \eqref{eq:Proof-ChiSquare-expansion}, we have
\begin{align*}
\frac{1}{n}\sum_{i\neq j}\tilde{\mathbf{k}}(x_{i},x_{j}) & =\frac{1}{n}\sum_{i\neq j}\sum_{l=1}^{\infty}\lambda_{l}^{(1)}\psi_{l}(x_{i})\psi_{l}(x_{j})\\
 & \overset{(a)}{=}\frac{1}{n}\sum_{l=1}^{\infty}\lambda_{l}^{(1)}\left(\left(\sum_{i=1}^{n}\psi_{l}(x_{i})\right)^{2}-\sum_{i=1}^{n}\psi_{l}^{2}(x_{i})\right)
\end{align*}
$\overset{(a)}{=}$ comes from $(\sum_{i}a_{i})^{2}=\sum_{i}a_{i}(a_{i}+\sum_{i\neq j}a_{j})=\sum_{i}a_{i}^{2}+\sum_{i\neq j}a_{i}a_{j}$.

For any finite $l$, taking expectation over \eqref{eq:Proof-ChiSquare-1}
gives 
\[
\lambda_{l}^{(1)}\mathbb{E}_{x}[\psi_{l}(x)]=\int\mathbb{E}_{x}\tilde{\mathbf{k}}(x',x)\psi_{l}(x')dF(x')=0
\]
by the result of \eqref{eq:Proof-of-ChiSq-0}. Hence $\mathbb{E}_{x}[\psi_{l}(x)]=0$
because $\lambda_{l}\neq0$ if $l\neq\infty$. Also, from \eqref{eq:Proof-ChiSquare-1},
we know that $\mbox{Cov}(\psi_{i}(x),\psi_{j}(x))=\delta_{ij}$. By
the Lindeberg-Levy CLT, 
\[
n^{-1/2}\sum_{i=1}^{n}\psi_{l}(x_{i})\rightsquigarrow\mathcal{N}(0,1).
\]
From \eqref{eq:Proof-ChiSquare-1}, it is also known that $\mathbb{E}_{x}[\psi_{l}^{2}(x)]=1$
for any $l$. Then strong LLN gives 
\[
n^{-1}\left(\sum_{i=1}^{n}\psi_{l}^{2}(x_{i})\right)\overset{p}{\rightarrow}1.
\]
Hence 
\[
\frac{1}{n}\sum_{l=1}^{\infty}\lambda_{l}^{(1)}\left(\left(\sum_{i=1}^{n}\psi_{l}(x_{i})\right)^{2}-\sum_{i=1}^{n}\psi_{l}^{2}(x_{i})\right)\rightsquigarrow\sum_{l=1}^{\infty}\lambda_{l}^{(1)}\left[z_{l}^{2}-1\right]
\]
where $z_{l}^{2}$ is a standard normal random variable.

Secondly, consider the limit of $n^{-1}\sum_{i\neq j}^{n}\tilde{\mathbf{k}}(\varepsilon_{i},\varepsilon_{j})$.
Following the previous procedure, we can derive 
\[
\frac{1}{n}\sum_{i\neq j}\tilde{\mathbf{k}}(\varepsilon_{i},\varepsilon_{j})=\frac{1}{n}\sum_{l=1}^{\infty}\lambda_{l}^{(2)}\left(\left(\sum_{i=1}^{n}\psi_{l}(\varepsilon_{i})\right)^{2}-\sum_{i=1}^{n}\psi_{l}^{2}(\varepsilon_{i})\right)\rightsquigarrow\sum_{l=1}^{\infty}\lambda_{l}^{(2)}\left[z_{l}^{2}-1\right].
\]

Finally, consider the limit of $n^{-1}\sum_{i\neq j}^{n}\left[\tilde{\mathbf{k}}(x_{i},\varepsilon_{j})+\tilde{\mathbf{k}}(\varepsilon_{i},x_{j})\right]$.
Because by independence condition, at $\theta_{0}$, $x$ and $\varepsilon$
are independent. Hence $\mbox{Cov}(\psi_{i}(x),\psi_{j}(\varepsilon))=0$
for all $i,j$. Then we can conclude 
\[
\frac{1}{n}\sum_{i\neq j}^{n}\left[\tilde{\mathbf{k}}(x_{i},\varepsilon_{j})+\tilde{\mathbf{k}}(\varepsilon_{i},x_{j})\right]\rightarrow0.
\]
Combine these three limit forms, the result follows. \end{proof}

\subsection{\label{sub:Proof-of-Theorem optimal-info} Proof of Theorem \ref{thm:optimal-info}}

\begin{proof} The proof uses some properties of Hellinger distance
function. With C14, a Hellinger distance can be approximately represented
as
\begin{align*}
\lim_{\|\theta-\theta_{0}\|\rightarrow0}\mathbf{d}_{\mathbf{H}}^{2}(H_{\theta},FG_{\theta})= & \lim_{\|\theta-\theta_{0}\|\rightarrow0}\frac{1}{2}\int\left(\sqrt{h_{\theta}(q)w_{\theta}(q)}-\sqrt{p_{\theta}(q)w_{\theta}(q)}\right)^{2}dq\\
= & \lim_{\|\theta-\theta_{0}\|\rightarrow0}\frac{1}{2}\int\left(\sqrt{h_{\theta}(q)}-\sqrt{p_{\theta}(q)}\right)^{2}w_{\theta}(q)dq\\
= & \lim_{\|\theta-\theta_{0}\|\rightarrow0}\frac{1}{2}\int\left((\theta-\theta_{0})^{T}\varDelta_{w}(q)\right)\left((\theta-\theta_{0})^{T}\varDelta_{w}(q)\right)^{T}\left(\frac{w_{\theta}(q)}{\mathbf{k}(q)}\right)dq+o_{p}(1)\\
\approx & \frac{1}{2}(\theta-\theta_{0})^{T}\int(\iota_{\theta_{0}}(q))d(H_{\theta}-P_{\theta})\left[\int(\iota_{\theta_{0}}(q))d(H_{\theta}-P_{\theta})\right]^{T}(\theta-\theta_{0})^{T}
\end{align*}
where $\iota_{\theta_{0}}(q)=\left.(\log D_{\theta}(q)w_{\theta}(q))'\right|_{\theta=\theta_{0}}$.

Step 1: By definition, $(\theta^{*},\mathbf{k})$ is the minimizer
of $\hat{W}_{\mathcal{H}}(\theta)$ such that 
\[
(\theta^{*},\mathbf{k})=\arg\min_{\theta}\max_{f\in\mathcal{H}}\left|\int f(q)dH_{\theta}-\int f(q)dP_{\theta}\right|,
\]
then $ $$(\theta^{*},w_{\theta^{*}})$ satisfies the following inequalities
\begin{align*}
\underset{:=2\sqrt{2}\mathbf{d}_{\mathbf{H}}^{2}(H_{\theta^{*}},FG_{\theta^{*}})}{\underbrace{\sqrt{4\int\left(\sqrt{h_{\theta^{*}}(q)}-\sqrt{p_{\theta^{*}}(q)}\right)^{2}w_{\theta^{*}}(q)dq}}} & \overset{(a)}{\geq}\sqrt{\int\left|\sqrt{h_{\theta^{*}}(q)}-\sqrt{p_{\theta^{*}}(q)}\right|^{2}w_{\theta^{*}}(q)dq}\\
\times & \sqrt{\int\left|\sqrt{h_{\theta^{*}}(q)}+\sqrt{p_{\theta^{*}}(q)}\right|^{2}w_{\theta^{*}}(q)dq}\\
 & \overset{(b)}{\geq}\int\left|\sqrt{h_{\theta^{*}}(q)}-\sqrt{p_{\theta^{*}}(q)}\right|\left|\sqrt{h_{\theta^{*}}(q)}+\sqrt{p_{\theta^{*}}(q)}\right|w_{\theta^{*}}(q)dq\\
 & \overset{(c)}{=}\int\left|h_{\theta^{*}}(q)-p_{\theta^{*}}(q)\right|w_{\theta^{*}}(q)dq\\
 & \overset{(d)}{\geq}\left|\int h_{\theta^{*}}(q)w_{\theta^{*}}(q)dq-\int p_{\theta^{*}}(q)w_{\theta^{*}}(q)dq\right|
\end{align*}
where $\overset{(a)}{\geq}$ comes from
\begin{align*}
4\int\left(\sqrt{h_{\theta^{*}}(q)}-\sqrt{p_{\theta^{*}}(q)}\right)^{2}w_{\theta^{*}}(q)dq & \geq\int\left|\sqrt{h_{\theta^{*}}(q)}-\sqrt{p_{\theta^{*}}(q)}\right|^{2}w_{\theta^{*}}(q)dq+\\
 & \int\left|\sqrt{h_{\theta^{*}}(q)}+\sqrt{p_{\theta^{*}}(q)}\right|^{2}w_{\theta^{*}}(q)dq\\
 & \geq\int\left|\sqrt{h_{\theta^{*}}(q)}-\sqrt{p_{\theta^{*}}(q)}\right|^{2}w_{\theta^{*}}(q)dq\times\\
 & \int\left|\sqrt{h_{\theta^{*}}(q)}+\sqrt{p_{\theta^{*}}(q)}\right|^{2}w_{\theta^{*}}(q)dq
\end{align*}
by using the properties $\left|\sqrt{h_{\theta^{*}}(q)}-\sqrt{p_{\theta^{*}}(q)}\right|<1$
and $0<\int\left|\sqrt{h_{\theta}(q)}-\sqrt{p_{\theta}(q)}\right|^{2}w_{\theta}(q)dq<1$;
$\overset{(b)}{\geq}$ follows from Cauchy\textendash{}Schwarz inequality
\[
\sqrt{\int\left|a(q)\right|^{2}w_{\theta^{*}}(q)dq\times\int\left|b(q)\right|^{2}w_{\theta^{*}}(q)dq}\geq\int\left|a(q)\right|\left|b(q)\right|^{2}w_{\theta^{*}}(q)dq
\]
where $a(q)=\sqrt{h_{\theta^{*}}(q)}-\sqrt{p_{\theta^{*}}(q)}$ and
$b(q)=\sqrt{h_{\theta^{*}}(q)}+\sqrt{p_{\theta^{*}}(q)}$; $\overset{(c)}{=}$
is from $|a(q)|\cdot|b(q)|=|a(q)b(q)|$; $\overset{(d)}{\geq}$ follows
from H�lder's inequality.

On the other hand, we have 
\begin{align*}
\max_{\mathcal{F}}\sqrt{S_{\mathcal{H}}(\theta^{*})}=\sqrt{\int\left|h_{\theta^{*}}(q)-p_{\theta^{*}}(q)\right|^{2}\mathbf{k}(q)dq} & \overset{(a)}{\geq}\sqrt{\int\left|h_{\theta^{*}}(q)-p_{\theta^{*}}(q)\right|^{2}w_{\theta^{*}}(q)dq}\\
 & \overset{(b)}{\geq}\int\left|h_{\theta^{*}}(q)-p_{\theta^{*}}(q)\right|w_{\theta^{*}}(q)dq\\
 & \overset{(c)}{=}\int\left|\sqrt{h_{\theta^{*}}(q)}-\sqrt{p_{\theta^{*}}(q)}\right|w_{\theta^{*}}(q)dq\times\\
 & \int\left|\sqrt{h_{\theta^{*}}(q)}+\sqrt{p_{\theta^{*}}(q)}\right|w_{\theta^{*}}(q)dq\\
 & \overset{(d)}{\geq}\underset{:=2\mathbf{d}_{\mathbf{H}}^{2}(H_{\theta^{*}},FG_{\theta^{*}})}{\underbrace{\int\left(\sqrt{h_{\theta^{*}}(q)}-\sqrt{p_{\theta^{*}}(q)}\right)^{2}w_{\theta^{*}}(q)dq}}
\end{align*}
where $\overset{(a)}{\geq}$ comes from the condition C14 
\[
\min_{\theta}\max_{w_{\theta}\in\mathcal{F}}\int\left|h_{\theta}(q)-p_{\theta}(q)\right|^{2}w_{\theta}(q)dq\geq\min_{\theta}\int\left|h_{\theta}(q)-p_{\theta}(q)\right|^{2}w_{\theta}(q)dq
\]
for any $w_{\theta}\in\mathcal{F}$; $\overset{(b)}{\geq}$ is from
H�lder's inequality; $\overset{(c)}{=}$ is from $|a(q)|\cdot|b(q)|=|a(q)b(q)|$;
$\overset{(d)}{\geq}$ is from $\left|\sqrt{h_{\theta^{*}}(q)}+\sqrt{p_{\theta^{*}}(q)}\right|\geq\left|\sqrt{h_{\theta^{*}}(q)}-\sqrt{p_{\theta^{*}}(q)}\right|$
for non-negative $h_{\theta^{*}}(q)$ and $p_{\theta^{*}}(q)$.

Step 2: Information in $\mathbf{d}_{\mathbf{H}}^{2}(H_{\theta^{*}},P_{\theta^{*}})$.

Consider an expansion of $2\mathbf{d}_{\mathbf{H}}^{2}(H_{\theta^{*}},P_{\theta^{*}})$
around $\theta_{0}$. 
\begin{align*}
2\mathbf{d}_{\mathbf{H}}^{2}(H_{\theta^{*}},P_{\theta^{*}})= & \underset{(I)}{\underbrace{\int\left(\sqrt{h_{\theta^{*}}(q)}-\sqrt{p_{\theta^{*}}(q)}-(\sqrt{h_{\theta_{0}}(q)}-\sqrt{p_{\theta_{0}}(q)})\right)^{2}w_{\theta^{*}}(q)dq}}\\
+ & \underset{(II)}{\underbrace{\int\left(\sqrt{h_{\theta_{0}}(q)}-\sqrt{p_{\theta_{0}}(q)}\right)^{2}w_{\theta^{*}}(q)dq}}\\
- & \underset{(III)}{\underbrace{2\int\left(\sqrt{h_{\theta^{*}}(q)}-\sqrt{p_{\theta^{*}}(q)}\right)\left(\sqrt{h_{\theta_{0}}(q)}-\sqrt{p_{\theta_{0}}(q)}\right)w_{\theta^{*}}(q)dq}.}
\end{align*}
Term $(I)$ has the following properties
\[
(I)\geq\left|\int\left(h_{\theta^{*}}(q)-p_{\theta^{*}}(q)\right)w_{\theta^{*}}(q)dq\right|-\left|\int\left(h_{\theta_{0}}(q)-p_{\theta_{0}}(q)\right)w_{\theta_{0}}(q)dq\right|
\]
and 
\[
(I)\leq\int\left|h_{\theta^{*}}(q)-p_{\theta^{*}}(q)\right|\mathbf{k}(q)dq+\int\left|h_{\theta_{0}}(q)-p_{\theta_{0}}(q)\right|\mathbf{k}(q)dq\leq\sqrt{S_{\mathcal{H}}(\theta^{*})}+\sqrt{S_{\mathcal{H}}(\theta_{0})},
\]
and term $(II)$ is $2\mathbf{d}_{\mathbf{H}}^{2}(H_{\theta_{0}},P_{\theta_{0}})$
follows the bounds
\[
\left|\int\left(h_{\theta_{0}}(q)-p_{\theta_{0}}(q)\right)w_{\theta_{0}}(q)dq\right|\leq(II)\leq\int\left|h_{\theta_{0}}(q)-p_{\theta_{0}}(q)\right|\mathbf{k}(q)dq\leq\sqrt{S_{\mathcal{H}}(\theta_{0})},
\]
and term $(III)$ by condition C14 has the following representation
\begin{align*}
(III) & =2(\theta^{*}-\theta_{0})^{T}\int\varDelta_{w}(q)\mathbf{k}^{-1/2}(q)(\sqrt{h_{\theta_{0}}(q)}-\sqrt{p_{\theta_{0}}(q)})w_{\theta^{*}}(q)dq+o(\|\theta^{*}-\theta_{0}\|)\\
 & =(\theta^{*}-\theta_{0})^{T}\int\varDelta_{w}(q)\left(\frac{w_{\theta^{*}}}{w_{\theta_{0}}\mathbf{k}}\right)d(H_{\theta_{0}}-P_{\theta_{0}})(q)+o(\|\theta^{*}-\theta_{0}\|)
\end{align*}
by using the fact that $ $ $w_{\theta_{0}}^{1/2}(q)dq=d\left(\sqrt{H_{\theta_{0}}}-\sqrt{P_{\theta_{0}}}\right)=(\sqrt{h_{\theta_{0}}(q)}-\sqrt{p_{\theta_{0}}(q)})^{-1}d(H_{\theta_{0}}-P_{\theta_{0}})/2$.

As $\theta^{*}\overset{p}{\rightarrow}\theta_{0}$, 
\[
0=\lim_{\theta^{*}\overset{p}{\rightarrow}\theta_{0}}W_{\mathcal{H}}(\theta^{*})\geq\left|\int\left(h_{\theta^{*}}(q)-p_{\theta^{*}}(q)\right)w_{\theta^{*}}(q)dq\right|
\]
and $S_{\mathcal{H}}(\theta^{*})\rightarrow0$ ensure that $(I)$
and $(II)$ are ignorable and the representation 
\[
(\theta^{*}-\theta_{0})^{T}\int\varDelta_{w}(q)\left(\frac{w_{\theta_{0}}}{\mathbf{k}}\right)^{\frac{1}{2}}d(H_{\theta_{0}}-P_{\theta_{0}})(q)
\]
in $(III)$ gives the information score function at $\theta_{0}$
\[
\int\varDelta_{w}(q)\left(\frac{w_{\theta_{0}}}{\mathbf{k}}\right)^{\frac{1}{2}}d(H_{\theta_{0}}-P_{\theta_{0}})(q).
\]

From the result of Delta method on minimum distance estimator, see
i.e. proof of Therem 3.3 (iii) in BW, it is known that 
\[
\theta^{*}=\theta_{0}-2\mathbf{V}^{-1}\int\varDelta_{w}(q)\left(\frac{w_{\theta_{0}}}{\mathbf{k}}\right)^{\frac{1}{2}}d(H_{\theta_{0}}-P_{\theta_{0}})(q)+o(1)
\]
where $\mathbf{V}$ is the derivative of $S_{\mathcal{H}}(\theta)$
assumed in condition C13. Then the covariance term is
\[
\left[\int\left(\varDelta_{w}(q)\left(\frac{w_{\theta_{0}}}{\mathbf{k}}\right)^{\frac{1}{2}}(q)\right)d(H_{\theta_{0}}-P_{\theta_{0}})(q')\right]\left[\int\left(\varDelta_{w}(q)\left(\frac{w_{\theta_{0}}}{\mathbf{k}}\right)^{\frac{1}{2}}(q')\right)d(H_{\theta_{0}}-P_{\theta_{0}})(q')\right]^{T}.
\]
Because $w_{\theta}\in\mathcal{F}$, $\mathbf{k}:=\arg\max_{f\in\mathcal{F}}\int fd(H_{\theta}-P_{\theta})(q)$
and $\|\mathbf{k}(q)\|\leq1$, we can see 
\[
\int\left(\varDelta_{w}(q)\left(\frac{w_{\theta_{0}}}{\mathbf{k}}\right)^{\frac{1}{2}}\right)d(H_{\theta_{0}}-P_{\theta_{0}})(q)\leq\int\left(\varDelta_{w}(q)\left(\frac{w_{\theta_{0}}}{f}\right)^{\frac{1}{2}}\right)d(H_{\theta_{0}}-P_{\theta_{0}})(q)
\]
for any $f\in\mathcal{F}$. Thus $(\theta^{*},\mathbf{k})$ attains
the optimal information.\end{proof}

\section{Proof of Other Statements}

\subsection{\label{sub:Proof-of-Corollary Metrization}Proof of Corollary \ref{cor:Metrization}}

\begin{proof} By the definition of metrization, we need to show that
for $\{P_{1},P_{2},\dots\}\subset\mathscr{P}$, $P_{n}\rightsquigarrow P$
if and only if $\left\Vert P_{n}\mathbf{k}-P\mathbf{k}\right\Vert _{\mathcal{H}}\rightarrow0$
as $n\rightarrow\infty$.

($\Rightarrow$) Because $\mathbf{k}$ is bounded and continuous,
so it is obvious that $P_{n}\rightsquigarrow P$ implies $\left\Vert P_{n}\mathbf{k}-P\mathbf{k}\right\Vert _{\mathcal{H}}\rightarrow0$
as $n\rightarrow\infty$.

($\Leftarrow$) $\mathcal{H}$ induces a dense set in $\mathcal{B}_{\mathcal{H}}\subset C(\mathcal{Q})$.
For the space of continuous bounded functions, any $\mathbf{k}\in\mathcal{F}_{\mathcal{H}}$
and any every $\epsilon>0$, there exists a $g\in\mathcal{F}_{\mathcal{H}}$
such that $\sup|\mathbf{k}-g|\leq\epsilon$. Therefore 
\begin{align*}
\left|\int\mathbf{k}dP_{n}-\int\mathbf{k}dP\right|= & \left|\int\mathbf{k}dP_{n}-\int gdP_{n}+\int\mathbf{k}dP-\int gdP+\int gdP_{n}-\int gdP\right|\\
\leq & \left|\int\mathbf{k}dP_{n}-\int gdP_{n}\right|+\left|\int\mathbf{k}dP-\int gdP\right|+\left|\int gdP_{n}-\int gdP\right|\\
\leq & 2\epsilon+\left|\int gdP_{n}-\int gdP\right|\leq2\epsilon+\|g\|_{\mathcal{H}}\|P_{n}\mathbf{k}-P\mathbf{k}\|_{\mathcal{H}}.
\end{align*}
If $\|P_{n}\mathbf{k}-P\mathbf{k}\|_{\mathcal{H}}\rightarrow0$ as
$n\rightarrow\infty$ and $\epsilon$ is arbitrary, $|P_{n}\mathbf{k}-P\mathbf{k}|\rightarrow0$
for any $\mathbf{k}\in\mathcal{B}_{\mathcal{H}}$ or say $P_{n}\rightsquigarrow P$.
\end{proof}

\subsection{\label{sub:Proof-of-Lemma Iden}Proof of Lemma \ref{lem:Identification}}

\begin{proof} ( $\Leftarrow$ ) Suppose a finite non-zero signed
Borel measure $\mathcal{P}$ satisfies Condition (I) and (II) in Lemma
\ref{lem:Identification}. By Jordan decomposition theorem \citep[Theorem 5.6.1,][]{Dudley2002},
there exist unique positive measure $\mathcal{P}^{+}$ and $\mathcal{P}^{-}$
such that $\mathcal{P}=\mathcal{P}^{+}-\mathcal{P}^{-}$ and $\mathcal{P}^{+}\bot\mathcal{P}^{-}$.
By Condition (II) in Lemma \ref{lem:Identification}, we have $\mathcal{P}^{+}(\mathcal{Q})=\mathcal{P}^{-}(\mathcal{Q})=:\alpha$.
Let $H_{\theta}=\alpha^{-1}\mathcal{P}^{+}$ and $P_{\theta}=\alpha^{-1}\mathcal{P}^{-}$.
Thus, $H_{\theta}\neq P_{\theta}$. (Or one can set $P_{\theta}=\alpha^{-1}\mathcal{P}^{+}$
and $H_{\theta}=\alpha^{-1}\mathcal{P}^{-}$.) Then, by 
\begin{align*}
W_{\mathcal{H}}^{2}(\theta)= & \left\Vert H_{\theta}\mathbf{k}-P_{\theta}\mathbf{k}\right\Vert _{\mathcal{H}}^{2}=\left\langle H_{\theta}\mathbf{k}-P_{\theta}\mathbf{k},\, H_{\theta}\mathbf{k}-P_{\theta}(q)\right\rangle \\
= & \left\langle \int_{\mathcal{Q}}\mathbf{k}(\cdot,q)dH_{\theta}(q),\int_{\mathcal{Q}}\mathbf{k}(\cdot,q')dH_{\theta}(q')\right\rangle +\left\langle \int_{\mathcal{Q}}\mathbf{k}(\cdot,q)dP_{\theta}(q),\int_{\mathcal{Q}}\mathbf{k}(\cdot,q')dP_{\theta}(q')\right\rangle \\
 & -2\left\langle \int_{\mathcal{Q}}\mathbf{k}(\cdot,q)dH_{\theta}(q),\int_{\mathcal{Q}}\mathbf{k}(\cdot,q')dP_{\theta}(q')\right\rangle \\
= & \int_{\mathcal{Q}\times\mathcal{Q}}\mathbf{k}(q,q')dH_{\theta}(q)dH_{\theta}(q')+\int_{\mathcal{Q}\times\mathcal{Q}}\mathbf{k}(q,q')dP_{\theta}(q)dP_{\theta}(q')\\
 & -2\int_{\mathcal{Q}\times\mathcal{Q}}\mathbf{k}(q,q')dH_{\theta}(q)dP_{\theta}(q')\\
= & \int_{\mathcal{Q}\times\mathcal{Q}}\mathbf{k}(q,q')d(H_{\theta}-P_{\theta})(q)d(H_{\theta}-P_{\theta})(q')=0.
\end{align*}
The last equality is set by Condition (I) in Lemma \ref{lem:Identification}.

( $\Rightarrow$ ) Suppose there exists $H_{\theta}\neq P_{\theta}$
such that $W_{\mathcal{H}}(\theta)=0$. Let $\mathcal{P}=P_{\theta}-H_{\theta}$.
Then $\mathcal{P}$ is a finite non-zero signed Borel measure that
satisfies $\mathcal{P}(\mathcal{Q})=0$. Note that 
\[
W_{\mathcal{H}}^{2}(\theta)=\int_{\mathcal{Q}\times\mathcal{Q}}\mathbf{k}(q,q')d(H_{\theta}-P_{\theta})(q)d(H_{\theta}-P_{\theta})(q')=0.
\]
Thus Condition (I) in Lemma \ref{lem:Identification} satisfies. \end{proof}

\subsection{\label{sub:Proof-of-Corollary Dual}Proof of Corollary \ref{cor:ImpDual}}

\begin{proof} By the property of Lagrange multipliers $u_{i}$, $v_{j}$,
the problem \eqref{eq:LP-dual} can be rewritten as a minimax problem:
\begin{align*}
V=\min_{\gamma_{i,j}(\theta)\in\Gamma(H_{n\theta},P_{n\theta})}\max_{u_{i},v_{j}} & \sum_{i,j}\gamma_{i,j}(\theta)C_{i,j}+\sum_{i}u_{i}-\sum_{i,j}u_{i}\gamma_{i,j}(\theta)+\sum_{j}v_{j}-\sum_{i,j}v_{j}\gamma_{i,j}(\theta)\\
=\min_{\gamma_{i,j}(\theta)\in\Gamma(H_{n\theta},P_{n\theta})}\max_{u_{i},v_{j}} & \sum_{i}u_{i}+\sum_{j}v_{j}+\sum_{i,j}\gamma_{i,j}(\theta)\left(C_{i,j}-u_{i}-v_{j}\right).
\end{align*}
By the saddlepoint argument, we have 
\[
V=\max_{u_{i},v_{j}}\min_{\gamma_{i,j}(\theta)\in\Gamma(H_{n\theta},P_{n\theta})}\sum_{i}u_{i}+\sum_{j}v_{j}+\sum_{i,j}\gamma_{i,j}(\theta)\left(C_{i,j}-u_{i}-v_{j}\right).
\]
Consider $\gamma_{i,j}(\theta)$ as the Lagrange multipliers associated
to $u_{i}+v_{j}\geq C_{i,j}$. Hence, we have the dual formulation
\begin{align*}
\max_{u_{i},v_{j}} & \sum_{i}u_{i}+\sum_{j}v_{j}\\
\mbox{s.t.}\quad & u_{i}+v_{j}\geq C_{i,j}.
\end{align*}
The result follows. \end{proof}

\section{Auxiliary Results}

\subsection{Large Deviation Bounds}

\begin{thm} \label{thm:McDiarmid's-inequality}(McDiarmid's inequality)
Let $c(\cdot):\mathcal{Q}^{m}\rightarrow\mathbb{R}$ where $\mathcal{Q}^{m}=\mathcal{Q}\times\cdots\times\mathcal{Q}$.
For all $i\in\{1,\dots,m\}$, there exists $C_{i}<\infty$ such that
\[
\sup_{(q_{1},\dots,q_{m})\in\mathcal{Q}^{m},\tilde{q}\in\mathcal{Q}}\left|f(q_{1},\dots,q_{m})-f(q_{1},\dots,q_{i-1},\tilde{q},q_{i+1},\dots,q_{m})\right|\leq C_{i}.
\]
Then for all probability measures $p$ and every $\epsilon>0$, 
\[
\Pr\left\{ (f(q_{1},\dots,q_{m})-\mathbb{E}[f(q_{1},\dots,q_{m})])>\epsilon\right\} <\exp\left(-\frac{2\epsilon^{2}}{\sum_{i=1}^{m}C_{i}^{2}}\right)
\]
where the expectation is defined for $(q_{1},\dots,q_{m})$. \end{thm}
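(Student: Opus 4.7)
The plan is to prove McDiarmid's inequality via the classical bounded-differences martingale argument, combining the Chernoff method with Hoeffding's lemma applied to the martingale differences of a Doob decomposition.

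First, I would construct the Doob martingale associated with $f$. Set $\mathcal{F}_i = \sigma(q_1,\dots,q_i)$ with $\mathcal{F}_0$ the trivial $\sigma$-field, and define $Z_i := \mathbb{E}[f(q_1,\dots,q_m)\mid \mathcal{F}_i] - \mathbb{E}[f]$ for $i=0,1,\dots,m$. Then $Z_0=0$, $Z_m = f - \mathbb{E}[f]$, and $\{Z_i\}$ is a martingale with respect to $\{\mathcal{F}_i\}$. Write $D_i := Z_i - Z_{i-1}$ for the martingale differences.

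Second, I would translate the bounded-differences hypothesis into an $\mathcal{F}_{i-1}$-conditional range bound on $D_i$. By independence of the coordinates, $D_i$ is a function of $q_1,\dots,q_i$, and by a coupling argument one shows that almost surely
\[
\sup_{q_i,\tilde{q}} \bigl| D_i(q_1,\dots,q_i) - D_i(q_1,\dots,q_{i-1},\tilde{q}) \bigr| \leq C_i,
\]
so that conditional on $\mathcal{F}_{i-1}$ the random variable $D_i$ takes values in an interval of length at most $C_i$ while having conditional mean zero.

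Third, I would apply the Chernoff bound and Hoeffding's lemma to handle the moment generating function. For any $\lambda>0$,
\[
\Pr\{Z_m > \epsilon\} \leq e^{-\lambda\epsilon}\,\mathbb{E}[e^{\lambda Z_m}].
\]
Conditioning iteratively on $\mathcal{F}_{m-1},\dots,\mathcal{F}_0$ and using Hoeffding's lemma, which gives $\mathbb{E}[e^{\lambda D_i}\mid \mathcal{F}_{i-1}] \leq \exp(\lambda^2 C_i^2/8)$ for any zero-mean random variable supported in an interval of length $C_i$, yields
\[
\mathbb{E}[e^{\lambda Z_m}] \leq \exp\!\left(\tfrac{\lambda^2}{8}\sum_{i=1}^m C_i^2\right).
\]
Finally, optimizing the resulting bound $\exp(-\lambda\epsilon + \lambda^2\sum_i C_i^2/8)$ over $\lambda>0$ at $\lambda^\star = 4\epsilon/\sum_i C_i^2$ produces exactly $\exp(-2\epsilon^2/\sum_{i=1}^m C_i^2)$.

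The main obstacle is the second step: justifying the conditional range bound $C_i$ on $D_i$ from the pointwise bounded-differences assumption on $f$. The argument requires using the independence of $q_i$ from $\mathcal{F}_{i-1}$ to write $D_i$ as an integral over the law of $q_i$ of a difference of conditional expectations, and then invoking the coordinate-wise supremum bound on $f$ to control its range. Once this is in place, Hoeffding's lemma and the telescoping Chernoff estimate are mechanical.
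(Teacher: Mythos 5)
The paper states this result as a classical auxiliary fact (McDiarmid's bounded-differences inequality) and gives no proof of its own, so there is no in-paper argument to compare against. Your proposal is the standard and correct proof: the Doob martingale $Z_i=\mathbb{E}[f\mid\mathcal{F}_i]-\mathbb{E}[f]$, the observation that each increment $D_i$ is conditionally mean-zero and confined to an interval of length at most $C_i$ (since, by independence of the coordinates, $D_i$ lies between $\inf_{\tilde q}$ and $\sup_{\tilde q}$ of $\mathbb{E}[f\mid q_1,\dots,q_{i-1},\tilde q]-\mathbb{E}[f\mid\mathcal{F}_{i-1}]$, and these differ by at most $C_i$ by the hypothesis), Hoeffding's lemma, telescoping, and optimization at $\lambda^{\star}=4\epsilon/\sum_i C_i^2$. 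All steps check out; the only cosmetic discrepancy is that the argument yields the bound with $\leq$ rather than the strict $<$ written in the paper's statement, which is how McDiarmid's inequality is normally stated anyway.
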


\begin{thm} \label{thm:Rademacher-symmetrization}(Rademacher symmetrization)
Let $\mathcal{B}_{\mathcal{H}}$ is the unit ball in the Hilbert space
$\mathcal{H}$ on a compact domain $\mathcal{Q}$. The associated
kernel is bounded such that $0\leq\mathbf{k}(q,q')\leq C_{\mathbf{k}}$.
Let $Q$ be an i.i.d. sample of size $m$ drawn according to a probability
measure $H_{\theta}(q)$ on $\mathcal{Q}$, and let $\sigma_{i}$
be i.i.d random variable taking values in $\{-1,1\}$ with equal probability.
The Rademacher average 
\[
R_{m}(\mathcal{B}_{\mathcal{H}},Q,H_{\theta}):=\mathbb{E}_{H_{\theta},\sigma}\sup\left|\frac{1}{m}\sum_{i=1}^{m}\sigma_{i}f(q_{i})\right|\leq\sqrt{(C_{\mathbf{k}}/m)}
\]
where the expectation is taken w.r.t. $Q\sim H_{\theta}$ and $\sigma$
jointly.\end{thm}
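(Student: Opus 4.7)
The plan is to exploit the reproducing property of the kernel $\mathbf{k}$ associated to $\mathcal{H}$, together with Cauchy--Schwarz and Jensen's inequality, to convert the supremum over the unit ball into a computable second-moment quantity. First I would note that for every $f\in\mathcal{B}_{\mathcal{H}}$ and every $q\in\mathcal{Q}$ the reproducing identity $f(q)=\langle f,\mathbf{k}(\cdot,q)\rangle_{\mathcal{H}}$ holds, so that $\frac{1}{m}\sum_{i=1}^{m}\sigma_{i}f(q_{i})=\bigl\langle f,\,\frac{1}{m}\sum_{i=1}^{m}\sigma_{i}\mathbf{k}(\cdot,q_{i})\bigr\rangle_{\mathcal{H}}$. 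Applying Cauchy--Schwarz and using $\|f\|_{\mathcal{H}}\leq 1$ immediately yields
\[
\sup_{f\in\mathcal{B}_{\mathcal{H}}}\left|\frac{1}{m}\sum_{i=1}^{m}\sigma_{i}f(q_{i})\right|\leq\left\Vert \frac{1}{m}\sum_{i=1}^{m}\sigma_{i}\mathbf{k}(\cdot,q_{i})\right\Vert _{\mathcal{H}}.
\]

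Next I would take expectations with respect to $\sigma$ and $Q\sim H_{\theta}$ and apply Jensen's inequality to the (concave) square root, so that
\[
R_{m}(\mathcal{B}_{\mathcal{H}},Q,H_{\theta})\leq\mathbb{E}_{H_{\theta},\sigma}\left\Vert \frac{1}{m}\sum_{i=1}^{m}\sigma_{i}\mathbf{k}(\cdot,q_{i})\right\Vert _{\mathcal{H}}\leq\sqrt{\mathbb{E}_{H_{\theta},\sigma}\left\Vert \frac{1}{m}\sum_{i=1}^{m}\sigma_{i}\mathbf{k}(\cdot,q_{i})\right\Vert _{\mathcal{H}}^{2}}.
\]
Expanding the squared norm using bilinearity and the reproducing property gives a double sum $\tfrac{1}{m^{2}}\sum_{i,j}\sigma_{i}\sigma_{j}\mathbf{k}(q_{i},q_{j})$. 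Using the independence of the Rademacher variables, $\mathbb{E}_{\sigma}[\sigma_{i}\sigma_{j}]=\delta_{ij}$, all cross-terms vanish and the expectation collapses to $\tfrac{1}{m^{2}}\sum_{i=1}^{m}\mathbb{E}_{H_{\theta}}\mathbf{k}(q_{i},q_{i})$.

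Finally, I would invoke the uniform bound $\mathbf{k}(q,q)\leq C_{\mathbf{k}}$ from the hypothesis, which gives $\mathbb{E}_{H_{\theta}}\mathbf{k}(q_{i},q_{i})\leq C_{\mathbf{k}}$ for each $i$, and therefore $\mathbb{E}_{H_{\theta},\sigma}\|m^{-1}\sum_{i}\sigma_{i}\mathbf{k}(\cdot,q_{i})\|_{\mathcal{H}}^{2}\leq C_{\mathbf{k}}/m$. Combining with the Jensen step yields the claimed inequality $R_{m}(\mathcal{B}_{\mathcal{H}},Q,H_{\theta})\leq\sqrt{C_{\mathbf{k}}/m}$. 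There is no real obstacle here: the argument is essentially an application of the reproducing property plus a standard variance computation for Rademacher sums. The only subtle point worth being explicit about is that the supremum over the infinite-dimensional ball $\mathcal{B}_{\mathcal{H}}$ is attained (in the Cauchy--Schwarz step) by $f=\|\cdot\|_{\mathcal{H}}^{-1}$ times the empirical kernel mean, which is why the bound is tight up to the Jensen gap.
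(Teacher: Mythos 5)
Your proof is correct: the reproducing-property/Cauchy--Schwarz/Jensen chain, followed by $\mathbb{E}_{\sigma}[\sigma_{i}\sigma_{j}]=\delta_{ij}$ killing the cross terms and the diagonal bound $\mathbf{k}(q,q)\leq C_{\mathbf{k}}$, is exactly the standard argument for bounding the Rademacher complexity of an RKHS unit ball, and it is consistent with the paper's setup (which uses $h=\langle h,\mathbf{k}(\cdot,q)\rangle_{\mathcal{H}}$ elsewhere). The paper states this theorem as an auxiliary result without proof, so your argument simply supplies the proof the authors implicitly rely on; no gaps.
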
 
\end{document}